\newcommand{\sss}{{\mathcal S}}
\newcommand{\ttt}{{\bf T}}
\newcommand{\Cech}{{\v Cech} }
\newcommand{\ilim}{\varprojlim}
\newcommand{\dlim}{\varinjlim}
\newcommand{\R}{{\mathbb R}}
\newcommand{\Q}{{\mathbb Q}}
\newcommand{\Z}{{\mathbb Z}}
\newtheorem{thm}{Theorem}[section]
\newtheorem*{thm*}{Theorem}
\newtheorem{cor}[thm]{Corollary}
\newcommand{\seabox}{{\framebox{$\searrow$}}}
\newcommand{\neabox}{{\framebox{$\nearrow$}}}
\newcommand{\swabox}{{\framebox{$\swarrow$}}}
\newcommand{\nwabox}{{\framebox{$\nwarrow$}}}
\theoremstyle{definition}
\newtheorem{ex}{Example}
\begin{document}
\title{Cohomology of Hierarchical Tilings}
\author{Lorenzo Sadun}
\date{\today}

\address{Department of Mathematics\\The University of
  Texas at Austin\\ Austin, TX 78712} \email{sadun@math.utexas.edu}

\maketitle

\setlength{\baselineskip}{.6cm}

Algebraic invariants, such as homotopy groups, homology groups, and
cohomology groups, are used to study topological spaces and maps
between them. The best known of these is homology. In a homology
theory, we associate Abelian groups $C_k(X)$ of {\em chains} to a
topological space $X$, and define a boundary operator $\partial_k:
C_k(X) \to C_{k-1}(X)$ such that $\partial_{k-1}\circ \partial_k =
0$. Chains in the kernel of $\partial_k$ are called closed, and chains
in the image of $\partial_{k+1}$ are called boundaries. The $k$-th
homology of $X$ in this setup is
$H_k(X)=Ker(\partial_k)/Im(\partial_{k+1})$. A continuous map $f: X
\to Y$ induces {\em push-forward} maps $f_*: C_k(X) \to C_k(Y)$.
(Strictly speaking there is one such map for each integer $k$, but
they are all denoted $f_*$). This in turn induces a map (also denoted
$f_*$) from $H_k(X)$ to $H_k(Y)$. Homotopic maps induce the same map
on homology. Homology groups can then help us classify spaces, and the
pushforward $f_*: H_k(X) \to H_k(Y)$ helps classify maps up to
homotopy, and hence the relation between $X$ and $Y$. There are many
different homology theories, including simplicial, singular and
cellular. For CW complexes they all yield isomorphic groups, so we
often get lazy and speak of {\em the} homology of a space $X$ without
specifying the theory.

In a {\em cohomology\/} theory, we associate Abelian groups $C^k(X)$ of
{\em cochains\/} to $X$ and define a {\em coboundary\/} operator $\delta_k
: C^k(X) \to C^{k+1}(X)$ such that $\delta_{k+1}\circ \delta_k
=0$. Given such a setup, the $k$-th cohomology group of $X$ is
$H^k(X)= Ker(\delta_k)/Im(\delta_{k-1})$. A continuous map $f: X \to
Y$ induces {\em pullback\/} maps $f^*:C^k(Y) \to C^k(X)$ and $f^*:
H^k(Y) \to H^k(X)$.

One way to get a cohomology theory is to start with a homology theory
and dualize everything\footnote{This is where the prefix ``co'' for
  objects related to cohomology comes from.}. We can define $C^k(X)$
to be the dual space of $C_k(X)$, and $\delta_k$ to be the transpose
of $\partial_{k+1}$. That is, if $\alpha$ is a $k$-cochain and $c$ is
a $(k+1)$-chain, then
\begin{equation}
(\delta_k \alpha)(c) : = \alpha(\partial_{k+1} c),
\end{equation} 
since the boundary $\partial c$ of $c$ is a $k$-chain\footnote{When
the dimension of a chain or cochain is clear, we often omit the subscript
from $\partial_k$ or $\delta_k$.}. This is
how simplicial, singular, and cellular cohomology are
defined. However, there are also cohomology theories that are defined
intrinsically rather than via homology. In de Rham cohomology, if $X$
is a smooth manifold, then $C^k(X)$ is the set of $k$-forms on $X$,
and $\delta_k$ is the exterior derivative. In \Cech cohomology, the
cochains are defined via open covers of $X$.  Regardless of the setup,
we call elements of $Ker\ \delta_k$ {\em co-closed\/} and elements of
$Im\ \delta_{k-1}$ {\em co-exact\/}, and define
\begin{equation} \label{cohodef} H^k(X) = 
(Ker\ \delta_k)/(Im \ \delta_{k-1}). \end{equation}  

Since the 1990s, \Cech cohomology has been used to study tiling
spaces\footnote{All tilings in this chapter will be assumed to have
  finite local complexity, and in particular to have tiles that meet
  full-edge to full-edge.  Cohomology can also be used to study tiling
  spaces of infinite local complexity, but both the calculations and
  the interpretations are more complicated.}.  This began with work of
Kellendonk \cite{Kel-force-border}, and really took off after the
seminal work of Anderson and Putnam \cite{AP}.  This chapter will
address three essential questions, all of which have generated a host
of papers: (1) What is tiling cohomology?  (2) How do you compute it?
(3) What is it good for?  Most of this chapter is review material, but
the content of Sections 3.1.1 and 3.1.2 is new and is joint work with
John Hunton.

\section{What is tiling cohomology?}

Many algebraic invariants that are used to classify topological spaces
do not work very well with tiling spaces. Tiling spaces (with finite
local complexity) are ``matchbox manifolds''; foliated spaces that
locally look like the product of Euclidean space and a Cantor
set. Tiling spaces have uncountably many path components.  Most of the
standard algebraic invariants are then useless, since they look at
each path component separately, without regard to how the path
components approximate one another. For instance, in singular
homology, $H_0$ of a tiling space is a free group with uncountably
many generators, while all higher homology groups vanish. The
fundamental group and all higher homotopy groups also vanish.

To get around these difficulties, we need to employ less familiar
cohomology theories, especially \Cech cohomology, which is well
adapted to tiling theory. In Subsection \ref{secinverse} we describe
how to view tiling spaces as inverse limits. In Subsection
\ref{secCech} we describe \Cech cohomology and explain how to view the
cohomology of an inverse limit space. In Subsection \ref{secPE} we go
over {\em pattern-equivariant cohomology\/}. This is a theory,
isomorphic to \Cech cohomology, in which the cochains and cocycles can
be viewed as functions on a single tiling.  PV cohomology, described
in subsection \ref{secPV}, is another reformulation of the \Cech
complex, only now the cochains are functions on Cantor sets.  Finally,
in subsection \ref{secQC} we describe quotient cohomology, an analogue
of relative cohomology that is very useful in computations.

\subsection{Inverse limit spaces}\label{secinverse}

Let $\Gamma^0, \Gamma^1, \Gamma^2, \ldots$ be a 
sequence of topological spaces, and for each $n>0$ let 
let $\rho_n: \Gamma^n \to \Gamma^{n-1}$ be a continuous map.
The {\em inverse limit\/} $\ilim (\Gamma^n, \rho_n)$
is a subset of the product space $\prod_n \Gamma_n$. It is the set of all
sequences $(x_0, x_1, x_2, \ldots) \in \prod_n \Gamma^n$ such that for
each $n>0$, $\rho_n(x_n)=x_{n-1}$. The spaces $\Gamma^n$ are called
{\em approximants\/} to the inverse limit, since knowing $x_n \in \Gamma^n$
determines the first $n+1$ terms $(x_0,\ldots,x_n)$ in the sequence, and
thus approximates the entire sequence in the product topology. 

A simple example is the {\em dyadic solenoid\/} $Sol_2$. Each $\Gamma_n$ is the
circle $\R/\Z$, and each $\rho_n$ is the doubling map. A point in
$Sol_2 = \ilim (S^1, \times 2)$ is a point $x_0$ on the unit circle,
together with a choice between two possible preimages $x_1$, another choice 
between possible preimages $x_2$ of $x_1$, another choice of $x_3$, etc. 
Infinitely many discrete
choices make a Cantor set, and $Sol_2$ is a Cantor set bundle over the 
circle. 

There are many descriptions of tiling spaces as inverse limits, and 
we will present a few of the constructions in Section \ref{How-to}. 
If the tilings have finite
local complexity, then the approximants are branched manifolds or
branched orbifolds \cite{AP, BBG, inverse}. Even if the tilings do not have
finite local complexity, it is usually possible to construct
reasonable approximants. The approximants $\Gamma^n$ parametrize the
possible restrictions of a tiling to a ball of radius $r_n$, with
$\lim_{n \to \infty} r_n = \infty$, and the maps $\rho_n$ are obtained
by restricting the tiling to a smaller region. 
A point in
the inverse limit is a set of consistent instructions for tiling bigger
and bigger balls around the origin, which is tantamount to a tiling
of the entire plane. 

\subsection{\Cech cohomology}\label{secCech}

The precise definition of the \Cech cohomology $\check H^*(\Omega)$ of
a topological space $\Omega$ involves the combinatorics of open covers
of $\Omega$, and how the combinatorics change with refinements of the
open covers. The (complicated!) details can be found in an algebraic
topology text \cite{BT, Hatcher, book} and need not concern us here.
What {\em do\/} concern us are some standard properties of \Cech
cohomology.

\begin{thm} If $X$ is a CW complex, then the \Cech cohomology $\check
  H^*(X)$ is naturally isomorphic, as a ring, to the singular
  cohomology $H^*(X)$, and also to the cellular cohomology. If $X$ is
  a manifold, then the \Cech cohomology with real coefficients is
  isomorphic to the de Rham cohomology $H_{dR}^*(X)$.
\end{thm}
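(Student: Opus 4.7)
The plan is to assemble three separate comparison results: the nerve theorem relating \Cech cohomology of a good cover to singular cohomology, the standard skeletal filtration comparison between cellular and singular cohomology, and the \Cech--de Rham double complex of Bott--Tu for the manifold statement.

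First I would attack the CW case by exhibiting a \emph{good cover} of $X$, meaning an open cover $\mathcal{U}$ such that every non-empty finite intersection of members of $\mathcal{U}$ is contractible. On a CW complex such covers exist: one thickens each open cell slightly using the canonical mapping cylinder neighborhoods that local contractibility and paracompactness of CW complexes supply. The nerve theorem then gives a weak equivalence between $X$ and the geometric realization $|N(\mathcal{U})|$ of the nerve of $\mathcal{U}$. By construction the \Cech cohomology of $\mathcal{U}$ with coefficients in an abelian group $A$ is the simplicial cohomology of $N(\mathcal{U})$ with coefficients in $A$, so one gets $H^*(N(\mathcal{U});A)\cong H^*_{\text{sing}}(X;A)$. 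Because good covers are cofinal in the directed system of all open covers, the direct limit defining $\check H^*(X;A)$ stabilizes at any good cover, producing the first isomorphism. The isomorphism between singular and cellular cohomology is then the classical consequence of the long exact sequence of the pair $(X^{(k)}, X^{(k-1)})$, whose associated spectral sequence has $E_1$-page equal to the cellular cochain complex and collapses at $E_2$.

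To upgrade these abelian-group isomorphisms to ring isomorphisms, I would compare the cup products directly. Both the simplicial cup product on $N(\mathcal{U})$ and the singular cup product are defined by Alexander--Whitney type rules; the weak equivalence furnished by the nerve theorem is induced by maps that respect these products up to cochain homotopy, which suffices for an isomorphism on cohomology rings. The cellular cup product can be identified with the singular one via the usual cellular approximation theorem.

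For the de Rham statement, I would follow Bott--Tu and work with the double complex $C^p(\mathcal{U},\Omega^q)$, where $\mathcal{U}$ is a good cover by geodesically convex open sets (available on any smooth manifold with a Riemannian metric). Filtering by $p$ and applying the Poincar\'e lemma on each contractible finite intersection collapses the rows to the \Cech complex with constant coefficients $\R$, whose cohomology is $\check H^*(X;\R)$. Filtering by $q$ and using a smooth partition of unity subordinate to $\mathcal{U}$ shows that the \Cech complex with values in the sheaf $\Omega^q$ is acyclic in positive \Cech degree, so the columns collapse to the de Rham complex, yielding $H^*_{dR}(X)$. Since both spectral sequences converge to the total cohomology of the double complex, the two are isomorphic.

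The main obstacle is the existence and cofinality of good covers on a general CW complex; manifolds are easier because geodesic convexity supplies them for free. Once that technical step is in hand, the rest is a matter of assembling standard comparison theorems and verifying compatibility of the cup products across the different models.
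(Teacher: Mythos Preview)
The paper does not actually prove this theorem. It is stated as a standard background fact from algebraic topology, with the surrounding text explicitly saying that the details ``can be found in an algebraic topology text [BT, Hatcher, book] and need not concern us here.'' There is no proof or sketch in the paper to compare your proposal against.

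Your outline is a reasonable sketch of the standard textbook arguments (nerve theorem and good covers for the \v{C}ech--singular comparison, the skeletal filtration for cellular versus singular, and the Bott--Tu \v{C}ech--de~Rham double complex for the manifold case). One technical caution: the existence of good covers on an \emph{arbitrary} CW complex is more delicate than you indicate, and the clean route for general CW complexes usually goes through the fact that CW complexes are paracompact and homotopy equivalent to simplicial complexes, together with homotopy invariance of \v{C}ech cohomology on such spaces, rather than by directly constructing a good cover cell by cell. But since the paper treats the whole statement as a quoted black box, this is a matter between you and Hatcher or Bott--Tu, not a discrepancy with the paper.
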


Recall that if we have a sequence $G_0, G_1, \ldots$ of groups, and a
collection of homomorphisms $\eta_n^*: G_{n} \to G_{n+1}$, then the
{\em direct limit\/} $\dlim(G_n, \eta_n)$ is the disjoint union of the
$G_n$'s, modulo the relation that $x_n \in G_n$ is identified with
$\eta_n(x_n) \in G_{n+1}$. Every element $x \in \dlim(G_n, \eta_n)$ is
the equivalence class of an element of an approximating group $G_n$;
{\em there are no additional elements ``at infinity''\/}. For instance,
$Z[1/2] := \dlim(\Z, \times 2)$ is isomorphic to the set of {\em
  dyadic rational\/} numbers whose denominators are powers of $2$. The
element $k \in G_n$ is associated with the rational number $k/2^n$,
and $k \in G_n$ equals $2k \in G_{n+1}$ (as it must). The rational
number $5/16$ can be represented as $5 \in G_4$, $10\in G_5$, or $20 \in
G_6$, etc., but has no representative in $G_0$, $G_1$, $G_2$ or $G_3$.
 
\begin{thm} If $\Omega$ is the inverse limit $\ilim (\Gamma^n, \rho_n)$ 
of a sequence
  of spaces $\Gamma^n$ under a sequence of maps $\rho_n: \Gamma^n \to
  \Gamma^{n-1}$, then $\check H^*(\Omega)$ is isomorphic to the direct
  limit $\dlim (\check H^*(\Gamma^n), \rho_{n+1}^*)$.
\end{thm}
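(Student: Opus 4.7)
The plan is to build a natural map from the direct limit to $\check H^*(\Omega)$ using the canonical projections, and then establish that the map is an isomorphism via a cofinality argument on open covers. The inverse limit $\Omega=\ilim(\Gamma^n,\rho_n)$ comes equipped with projection maps $\pi_n:\Omega\to\Gamma^n$ satisfying $\rho_{n+1}\circ\pi_{n+1}=\pi_n$. By functoriality of \Cech cohomology, the pullbacks $\pi_n^*:\check H^*(\Gamma^n)\to\check H^*(\Omega)$ satisfy $\pi_{n+1}^*\circ\rho_{n+1}^*=\pi_n^*$, and hence assemble into a well-defined map $\Phi:\dlim(\check H^*(\Gamma^n),\rho_{n+1}^*)\to\check H^*(\Omega)$. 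The remaining work is to prove $\Phi$ is an isomorphism.

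The key tool is the definition of \Cech cohomology as a direct limit $\check H^*(\Omega)=\dlim_{\mathcal U}H^*(N(\mathcal U))$ over open covers $\mathcal U$ of $\Omega$ ordered by refinement, with $N(\mathcal U)$ the nerve of $\mathcal U$. First I would verify that covers pulled back from approximants are cofinal in this directed system: given any open cover $\mathcal U$ of $\Omega$, I want to find $n$ and an open cover $\mathcal V$ of $\Gamma^n$ such that $\pi_n^{-1}(\mathcal V)$ refines $\mathcal U$. Since each $\Gamma^n$ is assumed compact Hausdorff, $\Omega$ is compact as a closed subspace of the product $\prod_n\Gamma^n$; and cylinder sets of the form $\pi_n^{-1}(V)$ with $V\subset\Gamma^n$ open form a basis for the topology of $\Omega$. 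Refining $\mathcal U$ by basic open sets, extracting a finite subcover by compactness, and letting $n$ be the largest level that appears then exhibits the required refinement.

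With cofinality in hand, \Cech cohomology of $\Omega$ may be computed as the iterated direct limit $\dlim_n\dlim_{\mathcal V}H^*(N(\pi_n^{-1}(\mathcal V)))$. For each fixed $n$, the inner limit is $\check H^*(\Gamma^n)$, because the nerve of $\pi_n^{-1}(\mathcal V)$ agrees combinatorially with the nerve of $\mathcal V$ whenever $\pi_n$ is surjective (which is standard for inverse limits of compact spaces with surjective bonding maps, as is typical for tiling approximants). Interchanging the two direct limits, which is legitimate for filtered colimits of Abelian groups, gives $\check H^*(\Omega)\cong\dlim_n\check H^*(\Gamma^n)$, and a check of the comparison maps shows the isomorphism is exactly $\Phi$.

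The main obstacle I expect is the careful verification that the nerve of a pulled-back cover matches the nerve of the original cover, which requires some control of the projections $\pi_n$: surjectivity suffices, but if $\pi_n(\Omega)$ were a proper subset one would be computing cohomology of the image rather than of $\Gamma^n$. The cofinality argument also implicitly uses that $\Omega$ is at least compact Hausdorff (or paracompact), so that \Cech cohomology is well-behaved; for the compact tiling spaces of interest, this is automatic. Apart from these technicalities, the argument is essentially a formal manipulation with limits, and once cofinality and the nerve identification are established, the isomorphism follows.
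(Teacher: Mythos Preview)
The paper does not actually prove this theorem; it is quoted without proof as one of the ``standard properties of \Cech cohomology,'' with references to algebraic topology texts such as \cite{BT, Hatcher, book}. Your cofinality argument is precisely the standard textbook proof, and the outline is correct: pulled-back covers are cofinal among all open covers of the compact space $\Omega$, and the nerve of $\pi_n^{-1}(\mathcal V)$ coincides with that of $\mathcal V$ once $\pi_n$ is surjective, so the two direct limits agree.

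The caveats you flag are the right ones. Surjectivity of $\pi_n$ is exactly what is needed for the nerve identification (a nonempty intersection $V_1\cap\cdots\cap V_k$ in $\Gamma^n$ must have a preimage in $\Omega$), and it holds here because the $\Gamma^n$ are compact Hausdorff and the bonding maps $\rho_n$ are surjective. Compactness of $\Omega$ is likewise automatic in the tiling setting. One small refinement: rather than saying the inner direct limit ``is'' $\check H^*(\Gamma^n)$, it is cleaner to note that the assignment $\mathcal V\mapsto\pi_n^{-1}(\mathcal V)$ is a cofinal, nerve-preserving map of directed systems, so the induced map on \Cech cohomology is the isomorphism $\pi_n^*$ you already wrote down; this makes the identification with $\Phi$ immediate and avoids any ambiguity about interchanging limits.
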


In other words, {\em all cohomology theories on a nice space are the
  same, and the \Cech cohomology of an inverse limit is the direct
  limit of the \Cech cohomologies of the approximants.\/}

This is how tiling cohomology is most frequently viewed in
practice. Every element of $\check H^*(\Omega)$ can be represented by a
class in $\check H^k(\Gamma^n)$ on some approximant $\Gamma^n$, and hence by a 
singular or cellular cochain on $\Gamma^n$. Instead
of working with arbitrary open covers of the tiling space itself, we
write everything in terms of the cells that compose the approximants.

As an example, consider the dyadic solenoid.
$H^0(S^1)=H^1(S^1)=\Z$. Since $\rho_n$ wraps the circle
twice around itself, $\rho_n^*$
is the identity on $H^0$ and multiplication by 2 on $H^1$. Thus 
$\check H^0(Sol_2) = \dlim(\Z, \times 1)=\Z$ and $\check H^1(Sol_2)
= \dlim(\Z, \times 2) = \Z[1/2]$.  If we view $S^1$ as consisting of 
one 0-cell and one 1-cell, then for each $m \ge n$, 
the element $2^{-n} \in \check H^1(Sol_2)$
can be represented by a cochain on $\Gamma^m$ that evaluates to
$2^{m-n}$ on the 1-cell.

\subsection{Pattern-Equivariant Cohomology}\label{secPE}

Tiling cohomology can also be understood in terms of the properties of
a single tiling of $\ttt\in \Omega$. This approach, called {\em
  pattern-equivariant (PE) cohomology\/}, was developed by Kellendonk
and Putnam \cite{Kel2,KP} using differential forms, and extended to
integer-valued cohomology in \cite{integer}.

Suppose that $f: \R^d \to \R$ is a smooth function. We say that $f$ is
{\em pattern-equivariant (or PE) with radius $R$\/} 
if the value of $f(x)$ depends
only on what the tiling $\ttt$ looks like in a ball of radius $R$ around
$x$. That is, if $x,y \in \R^d$, and if $\ttt-x$ and $\ttt-y$ agree
exactly on a ball of radius $R$ around the origin, then $f(x)$ must
equal $f(y)$. A function is called {\em strongly PE\/}
if it is PE with
some finite radius $R$.  A function is {\em weakly PE\/}
if it and all of its derivatives are uniform limits of strongly 
PE functions. 

PE forms are defined similarly. Let
$\Lambda_{PE}^k(\ttt)$ denote the $k$-forms on $\R^d$ that are strongly PE with
respect to the tiling $\ttt$.  It is easy to see that the exterior
derivative $d_k$ maps $\Lambda_{PE}^k(\ttt)$ to $\Lambda_{PE}^{k+1}(\ttt)$,
and we define
\begin{equation}\label{PE1}  H^k_{PE}(\ttt,\R) = (Ker\  d_k)/(Im\ d_{k-1}).
\end{equation}

\begin{thm}[\cite{KP}] If $\ttt$ is a tiling with finite local complexity
  with respect to translations, and if $\Omega$ is the continuous hull of
  $\ttt$, then $H^k_{PE}(\ttt,\R)$ is naturally isomorphic to the \Cech
  cohomology of $\Omega$ with real coefficients, denoted $\check
  H^k(\Omega,\R)$.
\end{thm}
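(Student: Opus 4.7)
The plan is to realize both sides of the claimed isomorphism as direct limits over the inverse-limit presentation $\Omega = \ilim(\Gamma^n, \rho_n)$ of the hull, and to identify them term by term. I would first observe that each approximant $\Gamma^n$ is a branched manifold whose top-dimensional cells parametrize pointed $r_n$-patches of $\ttt$. A choice of tiling $\ttt$ canonically determines a continuous map $\pi_n: \R^d \to \Gamma^n$ sending $x$ to the class of the pointed patch of $\ttt - x$ of radius $r_n$; these maps are compatible in the sense that $\rho_n \circ \pi_n = \pi_{n-1}$. The pullback $\pi_n^* \alpha$ of a smooth form $\alpha$ on $\Gamma^n$ is then, by construction, a smooth form on $\R^d$ whose value at $x$ depends only on the $r_n$-patch of $\ttt$ around $x$, that is, a strongly PE form of radius $r_n$.

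The crux of the argument is to establish the identification of complexes
\begin{equation*}
\Lambda_{PE}^k(\ttt) \;\cong\; \dlim_n \, \Omega^k(\Gamma^n),
\end{equation*}
where $\Omega^k(\Gamma^n)$ denotes the smooth $k$-forms on the branched manifold $\Gamma^n$ (i.e.\ forms on each cell that match on the identified faces). The map from right to left is pullback by $\pi_n$. For surjectivity: any strongly PE form $\omega$ on $\R^d$ with radius $R$ is, by definition, constant on the fibers of $\pi_n$ as soon as $r_n \geq R$, and therefore descends uniquely to a form on $\Gamma^n$. For injectivity: if $\pi_n^* \alpha = 0$, then $\alpha$ vanishes on the image of $\pi_n$, which is dense in $\Gamma^n$ (this uses repetitivity, or more weakly the fact that every patch appearing in any tiling of $\Omega$ is realized around some point of $\ttt$); smoothness then forces $\alpha = 0$. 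Exterior differentiation commutes with pullback, so this is an isomorphism of cochain complexes.

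Next I would pass to cohomology. Since direct limits are exact on the category of abelian groups, they commute with the formation of $\ker d_k / \mathrm{im}\, d_{k-1}$, giving
\begin{equation*}
H^k_{PE}(\ttt, \R) \;\cong\; \dlim_n H^k_{dR}(\Gamma^n).
\end{equation*}
A de Rham theorem for branched manifolds identifies each $H^k_{dR}(\Gamma^n)$ with $\check H^k(\Gamma^n, \R)$; this can be proved by Mayer--Vietoris, decomposing $\Gamma^n$ into its smooth cells (where the usual de Rham theorem holds) and tracking the branching identifications, or by comparison with cellular cohomology via integration over cells. The previous theorem then yields $\dlim_n \check H^k(\Gamma^n, \R) \cong \check H^k(\Omega, \R)$, completing the identification.

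The main obstacle is the branched-manifold step: the approximants $\Gamma^n$ are not ordinary smooth manifolds, so one must carefully define what a smooth form on $\Gamma^n$ is and verify a de Rham isomorphism in that setting. The matching condition across branch strata is exactly what ensures that the corresponding PE form on $\R^d$ is single-valued and smooth, so the notions are tautologically compatible once set up correctly, but writing down the Mayer--Vietoris argument (or an analogous partition-of-unity-with-branching argument) to deduce de Rham's theorem for these spaces is where the technical work lies. Finite local complexity enters here by ensuring that $\Gamma^n$ is a finite CW complex so that the colimit is well-behaved and the branch locus is tame.
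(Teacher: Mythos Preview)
The paper does not itself prove this theorem; it is simply cited to \cite{KP}. The paper \emph{does} give a sketch of proof for the next theorem (the integer-coefficient analogue), and your proposal is precisely the de Rham adaptation of that sketch: use the maps $\pi_n:\R^d\to\Gamma^n$ induced by $\ttt$, identify strongly PE forms with the direct limit of forms on the approximants, and then invoke the direct-limit description of \v Cech cohomology from Theorem~1.2. So your overall strategy matches the paper's.

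Two small remarks. First, the paper's sketch notes that $\pi_n$ is actually \emph{surjective}, not merely dense-image: since $\Omega$ is the orbit closure of $\ttt$, every patch occurring in any tiling of $\Omega$ already occurs somewhere in $\ttt$, so every point of $\Gamma^n$ is hit. Your injectivity argument therefore simplifies. Second, you are right that the extra work in the real-coefficient case is a de Rham theorem for the branched manifolds $\Gamma^n$; this is the genuine technical content beyond the integer sketch, and is handled in \cite{KP} (and in the BBG/Sadun references) essentially as you outline.
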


To get a PE interpretation of integer-valued cohomology, we use the
fact that a tiling $\ttt$ is itself a decomposition of $\R^d$ into
0-cells (vertices), 1-cells (edges). etc. A PE $k$-cochain $\alpha$ is
a function that assigns an integer to each oriented $k$-cell in a PE
way.  More precisely, there must be a radius $R$ such that, if $c_1$
and $c_2$ are two $k$-cells with centers of mass $x$ and $y$, and if
$T-x$ and $T-y$ agree on a ball of radius $R$ around the origin, then
$\alpha(c_1)=\alpha(c_2)$.  (For integer-valued functions, there is no
distinction between strong and weak pattern-equivariance.) Let
$C^k_{PE}(\ttt)$ denote the set of PE $k$-cochains. 
Instead of the exterior derivative, we consider the cellular
coboundary map $\delta_k$ that maps $C^k_{PE}(\ttt)$ to $C^{k+1}_{PE}(\ttt)$,
and define
\begin{equation} \label{PE2} H^k_{PE}(\ttt) = 
(Ker\ \delta_k)/(Im \ \delta_{k-1}). \end{equation}

\begin{thm}[\cite{integer}] If $\ttt$ is a tiling with finite local
  complexity with respect to translations, and if $\Omega$ is the
  continuous hull of $\ttt$, then $H^k_{PE}(\ttt)$ is naturally isomorphic
  to the \Cech cohomology of $\Omega$ with integer coefficients.
\end{thm}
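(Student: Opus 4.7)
The plan is to combine the inverse limit description of $\Omega$ with a cell-level identification of PE cochains. By the constructions to be presented in Section \ref{How-to}, one may write $\Omega = \ilim(\Gamma^n,\rho_n)$ where each approximant $\Gamma^n$ is a finite CW branched manifold built by collaring tiles out to a radius $R_n\to\infty$: the $k$-cells of $\Gamma^n$ are, by construction, the equivalence classes of $k$-cells of $\ttt$ under the relation that two cells are identified iff the patterns of $\ttt$ within radius $R_n$ of their centers agree. Since each $\Gamma^n$ is a CW complex, its \Cech and cellular cohomologies coincide, and the inverse-limit theorem stated above gives
\[
\check H^k(\Omega,\Z) \;\cong\; \dlim_n H^k_{\mathrm{cell}}(\Gamma^n,\Z).
\]

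The heart of the argument is to promote this to an isomorphism at the cochain level. I would define a map $\dlim_n C^k_{\mathrm{cell}}(\Gamma^n) \to C^k_{PE}(\ttt)$ by pullback: a cellular cochain $\beta$ on $\Gamma^n$ is sent to the cochain on $\ttt$ that assigns to each $k$-cell $c$ the value $\beta$ takes on the equivalence class of $c$. The resulting cochain is PE with radius $R_n$ by construction. The inverse map sends a PE cochain of radius $R$ to the cellular cochain on any $\Gamma^n$ with $R_n \ge R$ to which it descends (the descent being well-defined precisely because $\alpha$ only sees the radius-$R_n$ pattern). These two maps are mutually inverse and commute with the cellular coboundary, since $\delta$ depends only on local incidence data that is preserved under the collaring quotient. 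Thus $C^*_{PE}(\ttt) \cong \dlim_n C^*_{\mathrm{cell}}(\Gamma^n)$ as cochain complexes, and since filtered direct limits commute with taking cohomology,
\[
H^k_{PE}(\ttt) \;\cong\; \dlim_n H^k_{\mathrm{cell}}(\Gamma^n,\Z) \;\cong\; \check H^k(\Omega,\Z).
\]
Naturality is immediate from the naturality of pullback and of the direct limit.

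The main obstacle I anticipate is arranging the approximants so that the identification between $k$-cells of $\Gamma^n$ and radius-$R_n$ equivalence classes of $k$-cells of $\ttt$ is \emph{exact}, not merely ``up to tile type''. The raw Anderson-Putnam complex identifies cells whose adjacent patches agree only out to a single tile, which is weaker than matching within a prescribed metric radius. To obtain the correct correspondence one must work with a tower of collared refinements (as in \cite{AP,BBG,inverse}) whose collaring radii tend to infinity. Finite local complexity with respect to translations is used precisely here: for each $R_n$ only finitely many radius-$R_n$ patches occur, so every $\Gamma^n$ is a finite CW complex and the direct system is genuinely filtered. Once this combinatorial setup is in place, checking compatibility with $\delta$ and with the bonding maps $\rho_n^*$, and hence the final passage to the direct limit, is routine.
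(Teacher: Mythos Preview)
Your proposal is correct and follows essentially the same route as the paper's sketch: identify PE cochains on $\ttt$ with pullbacks of cellular cochains on the G\"ahler-type approximants $\Gamma^n$ (via the maps $\pi_n:\R^d\to\Gamma^n$ induced by $x\mapsto\ttt-x$), and then pass to the direct limit. Your write-up is more explicit about the cochain-level isomorphism and correctly flags that one needs the G\"ahler tower (collaring radii $R_n\to\infty$) rather than the uncollared Anderson--Putnam complex, which is exactly the subtlety the paper glosses over in its sketch.
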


\begin{proof}[Sketch of proof]
  $\ttt$ induces a map $\pi$ from $\R^d$ to $\Omega$, sending $x \in
  \R^d$ to the tiling $\ttt-x$. Composing with the natural projection
  from $\Omega$ to each approximant $\Gamma^n$, we obtain a sequence
  of maps $\pi_n: \R^d \to \Gamma^n$.  The orbit of $\ttt$ is dense in
  $\Omega$, so these maps are surjective.  Since $\Gamma^n$
  parametrizes the central patch of a tiling, a function on $\R^d$ is
  (strongly) pattern-equivariant if and only if it is the pullback of
  a function on one of the approximants $\Gamma^n$, and the same goes
  for cochains.  Studying $PE$ cochains of arbitrary radius is
  equivalent to studying cochains on $\Gamma^n$ and taking a limit as
  $n \to \infty$. In other words, $H^k_{PE}(\ttt) = \dlim
  H^k(\Gamma^n) \simeq \check H^k(\Omega)$.
\end{proof}

\begin{ex} Let $\ttt$ be a Fibonacci tiling $\ldots babaabaa \ldots$ of
  $\R$ by long (a) and short (b) tiles. Let $i_a$ be a 1-cochain that
  evaluates to 1 on each $a$ tile and 0 on each $b$ tile, and let
  $i_b$ evaluate to 1 on each $b$ and to 0 on each $a$. Since there
  are no 2-cells, $\delta i_a = \delta i_b=0$,
  so $i_a$ and $i_b$ define classes in $H^1_{PE}(\ttt)$. Once we develop
the machinery of Barge-Diamond collaring, we will see that these classes
  correspond to the generators of $\check H^1(\Omega) = \Z^2$.
\end{ex}

\begin{ex} If $\ttt$ is a Thue-Morse tiling $\ldots abbabaabbaababba
  \ldots$, obtained from the substitution $a \to ab$, $b \to ba$, one
  can similarly define indicator 1-cochains $i_a$ and $i_b$ that count
  $a$ and $b$ tiles. However, these cochains are cohomologous. To see
  this, divide the tiling $\ttt$ into 1-supertiles,\footnote{Recall that 
if a substitution tiling is
    non-periodic, then it can be decomposed into supertiles in a
    unique way, and that this decomposition is a local operation. In
    the Thue-Morse tiling, every patch of size $5$ or greater contains
    either the sub-word $aa$ or the sub-word $bb$. The boundaries
    between 1-supertiles sit in the middle of these sub-words, and at
    all points at even distance from these middles.}
with each being either $ab$ or $ba$.  Let $\gamma$ be
  a PE 0-cochain that evaluates to zero on the vertices that mark the
  beginning or end of such a supertile, to 1 on the vertex in the
  middle of an $ab$ supertile, and to $-1$ on the vertex in the middle
  of a $ab$ supertile.  Then $\delta \gamma$ evaluates to 1 on every
  $a$ tile (since the boundary of an $a$ tile is either the middle
  vertex of an $ab$ supertile minus the beginning of that supertile,
  or the end vertex of a $ba$ supertile minus the middle vertex) and
  $-1$ on every $b$ tile, so $\delta \gamma = i_a - i_b$.

  The first \Cech cohomology of the Thue-Morse tiling space is known
  to be $\Z[1/2] \oplus \Z$. The generators can be chosen as follows.
  Let $\alpha_n$ be a 1-cochain that evaluates to 1 on the first tile
  of each $n$-supertile and to 0 on the other $2^n-1$ tiles.  The cochain
$\alpha_n$ basically counts $n$-supertiles. Since there are two $n$-supertiles
in each $(n+1)$-supertile, $\alpha_n$ is cohomologous to $2 \alpha_{n+1}$.
  Let $\beta$ be a 1-cochain that evaluates to 1 on each $a$ tile that
  is followed by a $b$ tile, and to zero on $b$ tiles or on $a$ tiles
  that are followed by $a$ tiles. This is not cohomologous to any
  combination of the $\alpha_n$ tiles since, on average, $\beta$ applied to
  a long interval yields a third of the length of the interval,
  something that no finite linear combination of the $\alpha_n$'s can
  do.  The $\alpha_n$ cochains and $\beta$ generate all of $\check
  H^1$.  In this example, the cochains $i_a$ and $i_b$ are both cohomologous to
  $\alpha_1$.
\end{ex}

\subsection{PV cohomology}\label{secPV}

Another cohomology theory, called PV cohomology after the Pimsner-Voiculscu
exact sequence, was developed by Savinien and Bellissard \cite{SB}. This
theory is based on the structure of the {\em transversal\/} to the tiling
space. Since the $C^*$ algebra associated with a tiling space is constructed
from the transversal and the associated groupoid, this provides a 
more intuitive link
between the cohomology of a tiling space and the K-theory of the $C^*$ algebra.

We associate a distinguished point, called a {\em puncture\/}, to each type
of tile. Usually these are chosen in the interior of the tile, say at the
center of mass, but the precise choice of puncture is unimportant. The
{\em canonical transversal\/} $\Xi$ of a tiling space is the set of tilings for 
which there is a puncture at the origin. This is a Cantor set, and we can
study the ring of continuous integer-valued functions on $\Xi$, denoted
$C(\Xi,\Z)$. 
If $\alpha$ is a $d$-cochain, we define an associated function $f_\alpha$
on $\Xi$ as follows: if $\ttt \in \Xi$, then $f_\alpha(\ttt)$ equals $\alpha$
applied to the tile of $\ttt$ that lies at the origin. This map induces an
isomorphism (as an additive group) between $C(\Xi, \Z)$
and $C_{PE}^d(\ttt)$. 
 
Similarly, we can define punctures for all of the lower-dimensional
faces and edges and vertices of different tiles, with the condition that
if (say) an edge is on the boundary of two tiles, then its puncture viewed
as the boundary of the first tile is the same as its puncture viewed as 
the boundary of the second tile. For $k$ ranging from 0 to $d$, let 
$\Xi^k_\Delta$ be the set of tilings where the origin sits at a puncture of 
an $k$-cell. As with $\Xi = \Xi^d_\Delta$, $C(\Xi^k_\Delta,\Z)$
is isomorphic to $C_{PE}^k(\ttt)$.

In PV cohomology, the group of $k$-cochains is $C(\Xi^k_\Delta,\Z)$ and the 
coboundary maps are built from the geometry of the specific
tiles. After untangling the
definitions, these coboundary maps turn out to be identical to the 
coboundary maps in PE-cohomology. Thus, PV theory and PE theory not only 
have the same cohomologies, but have isomorphic cochain complexes.
For details of this argument, see
\cite{HK}.

\subsection{Quotient cohomology}\label{secQC}

So far we have been discussing the absolute cohomology of each tiling 
space. However, cohomology is also a functor that concerns maps between spaces. 
Inclusions give rise to relative cohomology (see \cite{Hatcher}), while
surjections give rise to a less-known construction called {\em quotient
cohomology}.

Let $f: \Omega_X \to \Omega_Y$ be a factor map of tiling spaces. As long as the 
tilings have finite local complexity with respect to translations, 
the pullback map
$f^*$ is an injection on cochains. (This argument applies both to
\v Cech cochains and to pattern-equivariant cochains.) We then
define the quotient cochain complex $C^k_Q(\Omega_X,\Omega_Y)$ to be 
$C^k(\Omega_X)/f^*(C^k(\Omega_Y))$,
and the quotient cohomology $H^k_Q(\Omega_X,\Omega_Y)$ to be the cohomology of 
this complex. 
The short exact sequence of cochain complexes:
\begin{equation} 0 \to C^k(\Omega_Y) \xrightarrow{f^*} C^k(\Omega_X) \to
C^k_Q(\Omega_X,\Omega_Y) \to 0
\end{equation}
induces a long exact sequence of cohomology groups
\begin{equation}\label{LES} 
\cdots \to \check H^k(\Omega_Y) \xrightarrow{f^*} \check H^k(\Omega_X) \to
H^k_Q(\Omega_X,\Omega_Y) \to  \check H^{k+1}(\Omega_Y) \to \cdots
\end{equation}

As with ordinary relative (co)homology, there is an excision principle:
\begin{thm}\cite{quotient} Let $f: X \to Y$ be a quotient 
map such that $f^*$ is
an injection on cochains. If $Z \subset X$ is an open set 
such that $f$ is
injective on the closure of $Z$, then $H^k_Q(X,Y)$ is isomorphic to
$H^k_Q(X-Z, Y-f(Z))$. 
\end{thm}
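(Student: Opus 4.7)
The plan is to adapt the classical proof of excision for relative cohomology (see \cite{Hatcher}) to the quotient-cochain setting, where pairs and subspace inclusions are replaced by factor maps and their cokernel complexes. I would work throughout with cellular or equivalently PE cochains on the two approximants; the corresponding statement for \Cech cochains then follows from the inverse-limit description of \Cech cohomology.

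First I would exhibit the restriction map that is to be shown an isomorphism. The inclusion $X - Z \hookrightarrow X$ induces a surjection $r: C^k(X) \to C^k(X - Z)$, and similarly $r_Y: C^k(Y) \to C^k(Y - f(Z))$. The hypothesis that $f$ is injective on $\bar Z$ ensures that $f$ restricts to a well-defined map $X - Z \to Y - f(Z)$ whose pullback intertwines $r$ and $r_Y$ with $f^*$. Passing to quotient cochains produces the candidate isomorphism
\[
\bar r : C^k_Q(X, Y) \to C^k_Q(X - Z, Y - f(Z)).
\]

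Second, I would construct a cochain-level splitting by extension by zero. For $\beta \in C^k(X - Z)$, set $E\beta = \beta$ on cells of $X - Z$ and $E\beta = 0$ on cells of $Z$. Any two possible extensions differ by a cochain supported on $\bar Z$, and this is where the hypothesis enters decisively: since $f$ is injective on $\bar Z$, the pullback $f^*$ identifies cochains on $f(\bar Z) \subset Y$ with those on $\bar Z$, so the ambiguity lies entirely in $f^* C^k(Y)$. Thus $E$ descends to a well-defined homomorphism $s: C^k_Q(X - Z, Y - f(Z)) \to C^k_Q(X, Y)$ satisfying $\bar r \circ s = \mathrm{id}$ and, by the same token, $s \circ \bar r = \mathrm{id}$ modulo $f^* C^k(Y)$.

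The step I expect to be the main obstacle is verifying that $s$ is itself a cochain map, i.e.\ that $\delta(E\beta) - E(\delta\beta)$ lies in $f^* C^{k+1}(Y)$. On $(k+1)$-cells entirely in $X - Z$ or entirely in $Z$, the two sides agree on the nose. The delicate case is a $(k+1)$-cell straddling $\partial Z$: there $\delta(E\beta)$ picks up contributions from faces lying in $\bar Z \setminus Z$ that $E(\delta\beta)$ ignores. The resulting discrepancy is a cochain supported on cells of $\bar Z$, and this is precisely why the hypothesis requires injectivity on the closure rather than on $Z$ itself---so that the discrepancy is the pullback of a cochain on $Y$ and hence vanishes in the quotient $C^{k+1}_Q(X, Y)$. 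Once this is established, $\bar r$ and $s$ descend to mutually inverse isomorphisms on quotient cohomology, proving the excision theorem.
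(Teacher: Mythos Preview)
The paper does not prove this theorem; it merely states it with a citation to \cite{quotient} and moves directly to applications. So there is no proof in the paper to compare against, and your attempt must stand on its own.

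Your overall strategy---restriction and extension-by-zero as mutually inverse maps on the quotient complexes---is the natural adaptation of classical excision and is the right idea. But there is a genuine gap at the step you yourself flag as delicate, and it appears earlier than you think. You claim that a cochain supported on $\bar Z$ lies in $f^*C^k(Y)$ because ``$f^*$ identifies cochains on $f(\bar Z)\subset Y$ with those on $\bar Z$.'' Injectivity of $f|_{\bar Z}$ does let you write down a candidate $\gamma$ on $Y$ supported on $f(\bar Z)$, but for $f^*\gamma$ to equal your cochain you also need that no cell of $X\setminus\bar Z$ maps into $f(\bar Z)$; otherwise $f^*\gamma$ is nonzero on that cell while your cochain vanishes there. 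What you actually need is that $\bar Z$ is \emph{saturated}, i.e.\ $f^{-1}(f(\bar Z))=\bar Z$. Injectivity on $\bar Z$ alone does not give this: take $X$ to be two disjoint copies of a complex, $Y$ one copy, $f$ the fold, and $\bar Z$ one of the copies.

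The same issue arises even earlier, in your first paragraph: for $f$ to restrict to a map $X-Z\to Y-f(Z)$ at all (so that $H^k_Q(X-Z,Y-f(Z))$ is defined via $f$), one needs $f^{-1}(f(Z))=Z$. This does not follow from injectivity on $\bar Z$; it is arguably implicit in the theorem since otherwise the target is meaningless, but you should state it as a hypothesis rather than claim to derive it. Once saturation of $\bar Z$ is assumed, your argument does go through: $E(r\alpha)-\alpha$ is supported on $Z$, the discrepancy $\delta(E\beta)-E(\delta\beta)$ is supported on $\partial Z\subset\bar Z$, and saturation together with injectivity lets you lift both to $Y$, so $\bar r$ and $s$ are mutually inverse chain maps on the quotient complexes.
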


For factor maps between tiling spaces, excision cannot be used
directly.  Every orbit is dense, so there are no open sets where $f$
is injective on the closure. However, it is often the case that a
factor map $\Omega_X \to \Omega_Y$ is injective apart from a small set
of tilings.  In such circumstances, one can use homotopy to convert
the tiling spaces into spaces where excision does apply.

\begin{ex} Let $\Omega_X$ be the 1-dimensional tiling space obtained
  from the period-doubling substitution $a \to ab$, $b \to aa$, and
  let $\Omega_Y$ be the dyadic solenoid $Sol_2$ which can be viewed
  formally as coming from a substitution $c \to cc$. (The dyadic
  solenoid is not actually a tiling space, but it has similar
  topological properties, being an inverse limit space, allowing us to
  apply the machinery of quotient cohomology.)  There is a factor map
  $f: \Omega_X \to \Omega_Y$ that identifies two translational orbits
  but is otherwise injective. This map sends a tiling ${\bf T}$ to the
  sequence $(x_0,x_1,\ldots)$, where $x_k$ is the location of the
  endpoints of the $k$-supertiles (mod $2^k$).  In other words,
  $f({\bf T})$ gives the locations of the supertiles of all order in
  ${\bf T}$, but does not indicate which supertiles are of type $a$ or
  type $b$. However, in a period-doubling tiling the $n$-th order
  supertiles are identical except on the very last entry.  Unless the
  tiling ${\bf T}$ consists of two infinite-order supertiles, $f({\bf
    T})$ determines ${\bf T}$. If the tiling ${\bf T}$ does consist of
  two infinite-order supertiles, then there is exactly one other
  tiling ${\bf T'}$, differing from ${\bf T}$ only at a single letter,
  such that $f({\bf T}')=f({\bf T})$.

  In that last instance, we say that ${\bf T}$ and ${\bf T}'$ have a
  {\em 0-dimensional feature}, namely the boundary of an
  infinite-order supertile, and agree away from that feature. Let
  $\Omega_{X_0}=\{{\bf T}, {\bf T}'\}$, and let $\Omega_{Y_0}=f({\bf
    T})$.  The map $f$ is basically a quotient map, identifying the
  orbit of ${\bf T}$ with the orbit of ${\bf T}'$. This identification
  is the {\em suspension} of a map from the 2-point set $\Omega_{X_0}$
  to the 1-point set $\Omega_{Y_0}$.
\end{ex}

The situation of this example is quite common. There are many
situations where a factor map $f: \Omega_X \to \Omega_Y$ between
tiling spaces (or solenoids) is injective except on the translational
orbits of a set $\Omega_{X_0}$ of tilings.  Furthermore,
$\Omega_{X_0}$ has the structure of a $d-\ell$-dimensional tiling
space, admitting an $\R^{d-\ell}$ action and locally being the product
of $\R^{d-\ell}$ and a totally disconnected set.  Defining
$\Omega_{Y_0}$ to be $f(\Omega_{X_0})$, the following theorem relates
the quotient cohomologies of $(\Omega_X, \Omega_Y)$ and
$(\Omega_{X_0}, \Omega_{Y_0})$.

\begin{thm}\cite{quotient} \label{thm-quotient} Let $f: \Omega_X\to \Omega_Y$ 
be a quotient map of tiling spaces
such that $f^*$ is injective on cochains. Suppose that $f$ is injective
aside from the translational orbits of a codimension-$\ell$ set $\Omega_{X_0} 
\subset \Omega_X$ 
of tilings. Let $\Omega_{Y_0} = f(\Omega_{X_0})$. Then $H^k_Q(\Omega_X,\Omega_Y) 
= H^{k-\ell}_Q(\Omega_{X_0}, \Omega_{Y_0})$. 
\end{thm}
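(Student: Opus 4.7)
The plan is to work in the pattern-equivariant (PE) cochain model and exploit the localization principle: since $f$ is a bijection off the translational orbits of $\Omega_{X_0}$, every nonzero class in the quotient cochain complex is forced to "live" in an arbitrarily thin neighborhood of the feature locus. A Thom/K\"unneth-style argument along the $\ell$-dimensional transverse directions then converts the problem to one on $\Omega_{X_0}$, with a degree shift of $\ell$.

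Step 1 (PE model and localization). Fix a PE radius $R$ and represent both $\check H^*(\Omega_X)$ and $\check H^*(\Omega_Y)$ by PE cellular cochains on a tiling $\ttt\in\Omega_X$ and on $f(\ttt)\in\Omega_Y$. A PE $k$-cochain $\alpha$ on $\Omega_X$ lies in $f^*C^k(\Omega_Y)$ precisely when its value on each cell $c$ is determined by the $R$-pattern of $f(\ttt)$ around $c$. Because $f$ is bijective off the orbits of $\Omega_{X_0}$, this can fail only at cells whose $R$-neighborhood meets a $(d-\ell)$-dimensional feature inherited from $\Omega_{X_0}$. Hence each class in $C^k_Q(\Omega_X,\Omega_Y)$ admits a representative supported in a $2R$-thickening of the feature locus.

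Step 2 (Local normal structure and degree shift). At a tiling ${\bf S}\in\Omega_{X_0}$, the $\R^{d-\ell}$-action on $\Omega_{X_0}$ exhausts the $d-\ell$ translations tangent to the feature, while the remaining $\ell$ directions in $\R^d$ move ${\bf S}$ off $\Omega_{X_0}$ into the injective region of $f$. Locally a neighborhood of the $\R^d$-orbit of $\Omega_{X_0}$ inside $\Omega_X$ therefore splits as the product of a transversal to $\Omega_{X_0}$ with a trivial $\R^\ell$, and similarly for $\Omega_{Y_0}\subset\Omega_Y$, compatibly with $f$. Passing to an approximant $\Gamma^n$ refined enough that the feature lies along a union of $(d-\ell)$-cells with tubular neighborhood a trivial $\ell$-disk bundle, each PE $k$-cell near the feature factors as an $(k-\ell)$-cell along the feature times an $\ell$-cell transverse to it. Evaluating a PE $k$-cochain on the transverse $\ell$-cell and viewing the result as a $(k-\ell)$-cochain on $\Omega_{X_0}$ yields, after quotienting by $f^*$ (which kills the cochains supported off the feature), a chain-level isomorphism
\begin{equation*}
C^k_Q(\Omega_X,\Omega_Y)\;\cong\;C^{k-\ell}_Q(\Omega_{X_0},\Omega_{Y_0})
\end{equation*}
that intertwines the coboundaries. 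Taking direct limits over $n$ gives the claimed isomorphism on cohomology.

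The genuinely delicate point will be Step 2. In the smooth setting this is just the Thom isomorphism for a trivial $\R^\ell$-bundle, but in a tiling space the "normal direction" to $\Omega_{X_0}$ mixes an Euclidean $\R^\ell$ factor with the Cantor-type variation of the tiling pattern. One has to check that a cofinal system of PE cellular approximants of $\Omega_X$ can be arranged in which the feature locus does lie along $(d-\ell)$-cells, the transverse $\ell$-direction is cellularly trivialised, and the trivialisation is natural under $f$; moreover, one must verify that cochains contributing to the quotient truly have no component in the purely tangential part (so that the degree shift is exactly $\ell$ and not something smaller). Once those cellular choices are made, the remaining ingredients — injectivity of $f^*$ on cochains, the long exact sequence \eqref{LES}, and passage to the direct limit — are routine.
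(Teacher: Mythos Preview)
The paper does not actually prove this theorem; it is quoted from \cite{quotient} with no argument supplied here. The only hint the text gives about the intended method is the paragraph preceding the statement: the excision principle (the theorem just above) cannot be applied to tiling spaces directly because every orbit is dense, but ``one can use homotopy to convert the tiling spaces into spaces where excision does apply.'' So the route implicit in the cited source is: deform $\Omega_X$ and $\Omega_Y$ (or their approximants) so that the set where $f$ fails to be injective becomes a genuine closed subcomplex with a neighborhood on which $f$ is injective, apply excision to strip away that neighborhood, and then identify what remains with an $\ell$-fold suspension of the pair $(\Omega_{X_0},\Omega_{Y_0})$.

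Your approach is in the same spirit but bypasses the excision theorem entirely, working instead with PE cochains and attempting a direct Thom-type collapse onto the feature locus. That is a legitimate alternative, and your Step~1 localization is essentially the PE translation of excision. However, two points deserve flagging. First, the assertion of a \emph{chain-level} isomorphism $C^k_Q(\Omega_X,\Omega_Y)\cong C^{k-\ell}_Q(\Omega_{X_0},\Omega_{Y_0})$ is stronger than what you should expect or need; Thom isomorphisms are cohomology-level statements, and forcing them at the cochain level requires cellular models far more rigid than a generic approximant provides. Second --- and you say this yourself --- the heart of the matter is arranging approximants in which the feature locus is a cellular subcomplex with a trivialised normal $\ell$-disk bundle, compatibly with $f$. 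You list this as something ``one has to check'' but do not check it, so as written the argument is a plausible outline rather than a proof. The homotopy-then-excision route in \cite{quotient} is precisely the mechanism that makes this step rigorous.
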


In our example, $\ell=1$, $\Omega_{X_0}$ consists of two points,
$\Omega_{Y_0}$ is a single point,
$H^0_Q(\Omega_{X_0},\Omega_{Y_0})=\Z$, and so
$H^1_Q(\Omega_X,\Omega_Y)=\Z$.  Since $\check H^1(\Omega_Y)=\Z[1/2]$,
the long exact sequence (\ref{LES}) shows that $\check H^1(\Omega_X)=
\Z[1/2] \oplus \Z$. This is in fact the first cohomology of the
period-doubling space.

An extension of Theorem \ref{thm-quotient} relates the generators of
$H^{k-\ell}_Q(\Omega_{X_0}, \Omega_{Y_0})$ to the generators of
$H^k_Q(\Omega_X, \Omega_Y)$.  This allows us to construct generators
for $H^k(\Omega_X)$ from generators of $H^k(\Omega_Y)$ and from
generators of $H^*_Q(\Omega_{X_0}, \Omega_{Y_0})$.

\section{How do you compute tiling cohomology?}
\label{How-to}

As with other topological spaces, there is no single ``best'' method 
for computing the cohomology of a tiling space. Different tiling spaces 
are best addressed with different methods. 

Cut-and-project tiling spaces are measurably conjugate to Kronecker flows
on higher-dimensional tori. As topological spaces, they are obtained from
the tori by removing some hyperplanes and gluing them back in multiple times.
Forrest, Hunton and Kellendonk \cite{FHK}, and later Kalugin \cite{Kalugin}
developed ways to compute the cohomology of $\Omega$ from the geometry of the
``window'' used in the cut-and-project scheme. 

Substitution tilings can easily be expressed as inverse limits spaces
in which all the approximants $\Gamma^n$ are homeomorphic to a single
space $\Gamma^0$, and where the substitution $\sigma$ can be viewed as a map
from $\Gamma^0$ to itself.  For these spaces,
computing the cohomology boils down to understanding the cohomology of 
$\Gamma^0$ 
and tracking how the classes evolve under the pullback map $\sigma^*$. 
There are many ways to do this, and
each inverse limit scheme gives rise to a calculational method. 
In this section we develop several such schemes, beginning with the original
ideas of Anderson and
Putnam, and working our way through G\"ahler's construction and the more 
recent ideas of Barge, Diamond, Hunton and Sadun. Variants of
the Anderson-Putnam and Barge-Diamond methods are then applied to
tilings with rotational symmetry and to hierarchical tilings that are
not substitutions (e.g., the ``generalized substitutions'' of \cite{F,
  Marseille}).

Tilings that come from local matching rules are harder to
understand. However, they can sometimes be related to substitution
tilings \cite{Mozes, GS, pinwheel}. When a substitution tiling space
$\Omega_Y$ is the quotient of a local matching rules tiling space 
$\Omega_X$, 
we can study the cohomology of $\Omega_X$ via the cohomology of $\Omega_Y$ 
and the quotient cohomology $H^k_Q(\Omega_X,\Omega_Y)$. 

\subsection{The Anderson-Putnam complex}

Suppose that we have a substitution tiling whose tiles are polygons
that meet full-edge to full-edge. We construct an inverse limit space
whose approximants $\Gamma^n$ describe partial tilings.  Specifically,
a point in $\Gamma^n$ describes where the origin sits within an
$n$-supertile. Since this also determines where the origin sits within
an $(n-1)$-supertile, we have a natural map $\sigma: \Gamma^n \to
\Gamma^{n-1}$ and can consider the inverse limit space $\Omega^0 =
\ilim (\Gamma^n, \sigma)$. 

Since the origin can sit anywhere in a supertile of any type, $\Gamma^n$
consists of one copy of each type of supertile. However, there is an 
ambiguity when the origin sits on the boundary of a supertile. If the origin
sits on the boundary between supertile $A$ and supertile $B$, do we consider
it as part of $A$ or $B$?  The answer is to identify the two edges. 

Specifically, $\Gamma^n$ is obtained by taking the disjoint union of
one copy of each kind of (closed) $n$-supertile, and then applying
the relation that, if somewhere in an admissible tiling an edge $e_1$
of supertile $A$ coincides with an edge $e_2$ of supertile $B$, then $e_1$
and $e_2$ are identified.  

These identifications do not just come in pairs. It may happen that the
right edge of $A$ is identified with the left edges of both $B$ and $C$, and
that the left edge of $C$ is identified with the right edges of both $A$ and 
$D$. In that case, the left edges of $B$ and $C$ and the right edges of $A$
and $D$ would all be identified. The information contained in that point
in $\Gamma^n$ would indicate that the origin either sits at a particular spot
on the right edge of $A$, or at that spot on the right edge of $D$, and also
that it sits at the corresponding spot on the left edge of either $B$ or $C$. 

The set of possible $n$-supertiles looks just like the set
of possible tiles, only scaled up by a factor of $\lambda^n$. As a
result, $\Gamma^n$ is just a scaled-up version of
$\Gamma^0$.
$\Gamma^0$ is called the (uncollared) Anderson-Putnam
complex of the substitution $\sigma$, and is denoted $\Gamma_{AP}$.  
Furthermore, the decomposition
of $n$-supertiles into constituent $(n-1)$-supertiles is combinatorially the
same for all $n$. After rescaling, there is a single map (which we again call 
$\sigma$) from $\Gamma_{AP}$ to itself. This map involves stretching each
tile in $\Gamma_{AP}$ by a factor of $\lambda$, dividing it into tiles via
the substitution rule, and then identifying pieces.  We then define
$\Omega^0 = \ilim(\Gamma_{AP}, \sigma)$. 

\subsubsection{Forcing the border}

{\em Forcing the border\/} was 
defined by Johannes Kellendonk in his
study \cite{Kel-force-border} of the Penrose tiling. 
As we shall see, if a substitution forces the border, then 
$\Omega^0$ is homeomorphic to the
tiling space $\Omega$, allowing for an easy computation of the cohomology
of $\Omega$.  If a substitution doesn't force the border, then
there are a variety of {\em collaring\/} techniques for describing the 
tiling space via a slightly different substitution that does force the border. 
By combining collaring with
the Anderson-Putnam construction, we can compute the cohomology of 
arbitrary substitution tiling spaces.

Suppose we have a non-periodic
substitution tiling space, so that $\sigma: \Omega \to \Omega$ is a
homeomorphism\cite{Mosse,Solomyak}.  
This means that we can decompose each tiling $T$
uniquely into a collection of non-overlapping $1$-supertiles, and by
extension we can decompose $T$ uniquely into non-overlapping
$k$-supertiles for every $k$.  The substitution is said to {\em force
  the border at level $k$\/} if any two $k$-supertiles of the same type
not only have the same decomposition into tiles, but also have the
same pattern of ordinary tiles surrounding them (i.e., the pattern of 
tiles that touch the supertiles at 1 or more points). Moreover, any two 
$n$-supertiles with $n>k$ have the same pattern of $(n-k)$-supertiles
surrounding them. 

\begin{figure}[ht]
\includegraphics[width=3.6truein]{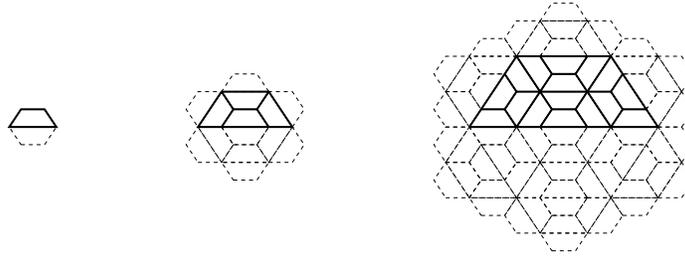}
\caption{In bold face, a half-hex tile, an order-1 supertile, and an order-2
supertile.  In dotted lines, the nearby tiles that these determine.}
\label{half-hex-fig}
\end{figure}

The half-hex substitution is shown in Figure \ref{half-hex-fig}.
The solid lines indicate the tiles within
a supertile, and the dotted lines indicate the neighboring tiles that must
also appear. This substitution forces the border at level 2, since the 
2-supertile is completely surrounded by determined tiles, but does not
force the border at level 1, since some of the tiles
that touch the four vertices of the 1-supertile are undetermined. 
By contrast, the chair tiling does not force the border at all, since 
tiles near the southwest corner of a chair supertile of arbitrary order can
appear in either of the patterns shown in Figure \ref{chair-border-fig}.

\begin{figure}[ht]
\centerline {\raise1.7ex\hbox{\includegraphics[width=1.3truein]{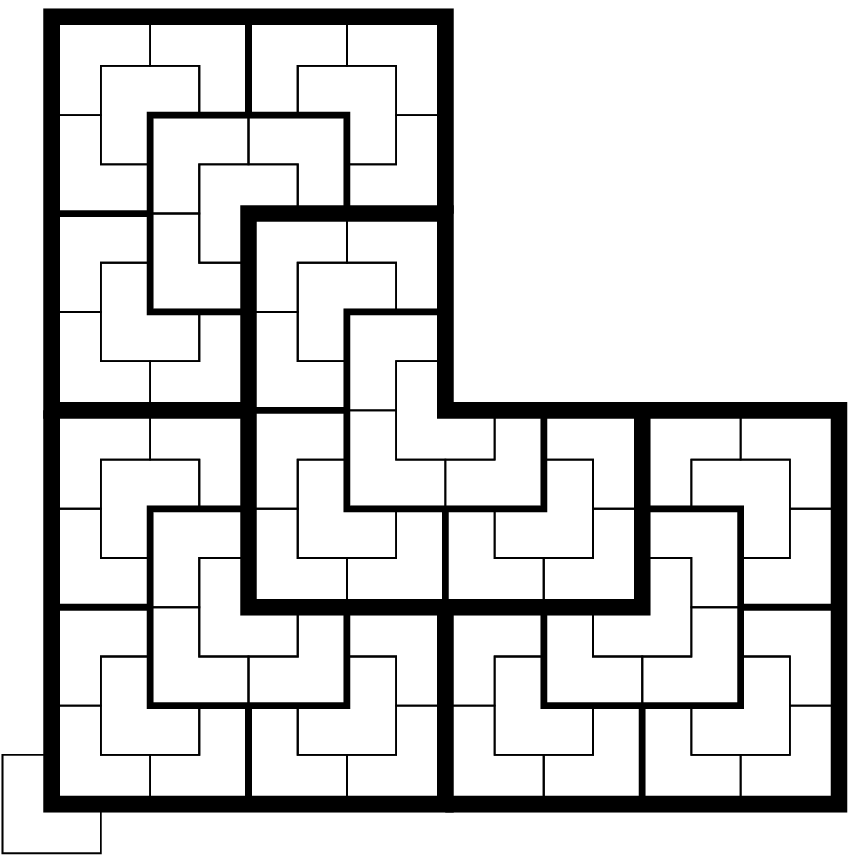}} 
\qquad
\hbox{\includegraphics[width=1.4truein]{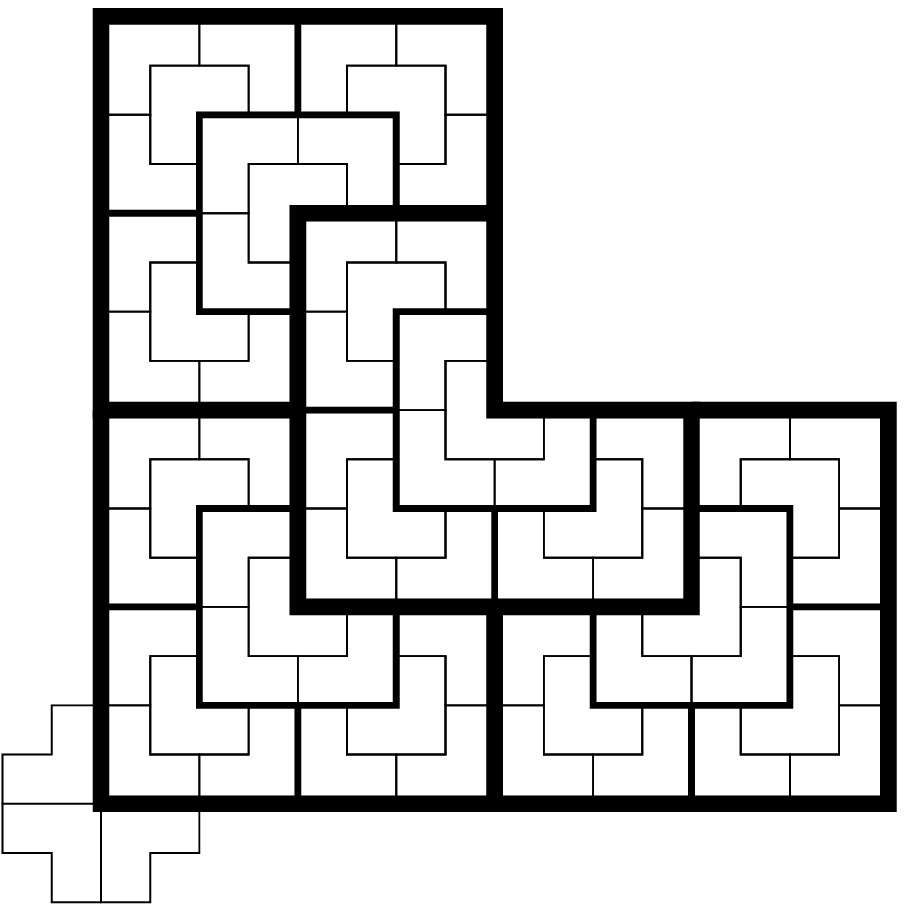}}}
\caption{There are two ways to extend a high-order chair supertile around
the southwest corner.}
\label{chair-border-fig}
\end{figure}

If a substitution forces the border at level $k$, 
then a point in $\Gamma^n$ not only determines where the origin sits in 
a supertile of level $n$, but it determines all of the $(n-k)$-supertiles
surrounding the supertile that contains the origin. If the origin sits on
the boundary between two or more $n$-supertiles, then there is some ambiguity
on the nature of the $n$-supertiles that surround the origin. However, there
is no ambiguity about the $(n-k)$-supertiles that surround the origin. 

The inverse limit $\Omega^0 = \ilim(\Gamma^n_{AP},\sigma)$ is a 
sequence of consistent
instructions for placing higher and higher-order supertiles in a growing
region containing the origin. The union of these regions is all of $\R^d$.
This is tantamount to
\begin{thm}\label{AP1} If $\sigma$ is a substitution that forces the border
and has finite local complexity with respect to translations,
then the corresponding tiling space $\Omega$ is homeomorphic 
to $\Omega^0$.
\end{thm}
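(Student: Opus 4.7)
The plan is to construct continuous maps $\pi \colon \Omega \to \Omega^0$ and $\psi \colon \Omega^0 \to \Omega$ and verify that they are mutual inverses. Since both spaces are compact Hausdorff (the inverse limit of compact Hausdorff spaces is compact Hausdorff, and $\Omega$ is the continuous hull of a tiling with finite local complexity), a continuous bijection between them will automatically be a homeomorphism, so most of the work is in the construction and well-definedness of the maps.

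First I would define $\pi$. Since $\sigma\colon \Omega\to\Omega$ is a homeomorphism (the non-periodicity hypothesis is implicit in the set-up, by the Mosse/Solomyak result cited earlier), every tiling $T\in\Omega$ has a unique decomposition into $n$-supertiles for each $n\ge 0$. Let $\pi_n(T)\in\Gamma_{AP}$ record the type of the $n$-supertile containing the origin together with the position of the origin within it, using the edge identifications of $\Gamma_{AP}$ to resolve the ambiguity when the origin lies on the boundary of two or more supertiles. The fact that an $n$-supertile is built from $(n-1)$-supertiles via the substitution rule gives $\sigma(\pi_n(T))=\pi_{n-1}(T)$, so the sequence $\pi(T):=(\pi_0(T),\pi_1(T),\ldots)$ lies in $\Omega^0$. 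Continuity of each $\pi_n$ follows from finite local complexity: two tilings that agree on a large ball around the origin have the same $n$-supertile at the origin (for $n$ such that $\lambda^n$ is at most the radius of agreement), placed at essentially the same position, so their images in $\Gamma_{AP}$ are close.

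Next I would construct the inverse $\psi$, and this is the step where border forcing is essential. Given a sequence $(x_0,x_1,\ldots)\in\Omega^0$, the datum $x_n$ specifies an $n$-supertile and the location of the origin within it; rescaling, it covers a region of diameter roughly $\lambda^n$ around the origin. By border forcing at level $k$, $x_n$ also determines all $(n-k)$-supertiles that touch the $n$-supertile around the origin, hence a patch of the tiling of diameter roughly $\lambda^n+\lambda^{n-k}$. The consistency relations $\sigma(x_{n+1})=x_n$ guarantee that the patch determined by $x_{n+1}$ extends the patch determined by $x_n$, even when the origin sits on a boundary: the ambiguity about which $n$-supertile contains the origin disappears once we look at the $(n-k)$-supertiles, which are unambiguously determined. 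Taking $n\to\infty$, these consistent patches exhaust $\R^d$, defining a unique tiling $\psi(x_0,x_1,\ldots)\in\Omega$. Continuity of $\psi$ is immediate: two nearby sequences agree on the first several coordinates, forcing their images to agree on a large patch about the origin.

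Finally I would check that $\psi\circ\pi=\mathrm{id}_\Omega$ and $\pi\circ\psi=\mathrm{id}_{\Omega^0}$. The first identity holds because the supertile decomposition of $T$ at every level is exactly the data read off by $\pi(T)$, and reassembling that data reproduces $T$. The second is essentially tautological from the construction of $\psi$, since $\pi_n(\psi(x_0,x_1,\ldots))$ again records the $n$-supertile at the origin, which is precisely $x_n$. The main obstacle, as noted, is the well-definedness and surjectivity of $\psi$: a consistent sequence of supertile placements a priori only determines the tiling where the origin actually sits, and without border forcing there can be genuine ambiguity (as the chair example in Figure \ref{chair-border-fig} illustrates) about how to extend across boundaries. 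Border forcing at level $k$ is exactly the hypothesis that removes this ambiguity and turns the inverse limit into a tiling of all of $\R^d$.
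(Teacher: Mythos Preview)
Your proposal is correct and follows essentially the same approach as the paper: the paper's argument (given in the paragraph immediately preceding the theorem) is precisely that border forcing at level $k$ makes each coordinate $x_n$ determine the $(n-k)$-supertiles surrounding the origin, so a consistent sequence in $\Omega^0$ gives instructions for tiling all of $\R^d$. Your write-up simply fleshes out this sketch by explicitly constructing the mutually inverse maps $\pi$ and $\psi$ and checking continuity.
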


\subsubsection{Anderson-Putnam collaring}

If the substitution $\sigma$ does not force the border, then
$\Omega^0$ is typically not homeomorphic to $\Omega$.  There is still
a map $\Omega \to \Omega^0$, whose $n$-th coordinate is a description
of the $n$-supertile containing the origin.  Furthermore, this map is
surjective. However, it is typically not injective. Even if the origin
is not on a boundary, knowing the supertiles to all orders containing
the origin may not describe the entire tiling, since the union of
these supertiles may be a quarter-plane or a half-plane.  If there is
more than one extention of this infinite partial-tiling to the entire
plane, then there is more than one preimage in $\Omega$.

To remedy this, we construct a new substitution using collared tiles.
Take a tiling $\ttt$, and identify tiles that are (a) of the same type
and (b) whose nearest neighbors are all of the same type.  That is,
tiles $t_1$ and $t_2$ are identified if, for some points $x \in t_1$
and $y \in t_2$, the tilings $\ttt-x$ and $\ttt-y$ agree exactly on
the tile containing the origin and on all tiles touching this central
tile. A {\em collared tile\/} is an equivalence class of tiles under this
identification. Note that a collared tile has the {\em same size and shape\/} 
as an
ordinary uncollared tile. The difference is that the label of the collared
tile carries extra information about its surroundings. 

\begin{ex} In the Fibonacci tiling, every $b$ tile is preceded and followed
by an $a$ tile, while an $a$ tile has three possibilities for its neighbors. 
There are thus four collared tiles, which we denote $A_1=(a)a(b)$, 
$A_2=(b)a(a)$, $A_3=(b)a(b)$ and $B=(a)b(a)$, 
where the notation $(x)y(z)$ means a $y$ tile that is 
preceded by  an $x$ and followed by a $z$. Under substitution, 
$A_1 \to (ab)ab(a) = A_3B$, $A_2 \to (a)ab(ab) = A_1B$, $A_3 \to (a)ab(a) 
= A_1B$, and $B \to (ab)a(ab) = A_2$. 
\end{ex}

We can relabel all of our tiles according to their neighbors to obtain a 
new tiling by collared tiles. For instance, in the Fibonacci tiling the
pattern $\ldots babaabaababaa \ldots$ becomes $\ldots BA_3BA_2A_1BA_2A_1B
A_3BA_2A_1 \ldots$. 

\begin{thm}\cite{AP} Rewriting a substitution in terms of collared tiles
always yields a system that forces the border. 
\end{thm}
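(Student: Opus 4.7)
The plan is to show that, after enough iterations, the collared substitution $\sigma'$ forces the border because the collar of a collared tile already encodes a full ring of uncollared neighbors, and iterating $\sigma$ inflates this ring into an arbitrarily thick one.

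First I would set up the basic geometry. A collared tile $T=(t,n(t))$ records $t$ together with its full collar $n(t)$, i.e.\ every original tile sharing at least a point with $t$; this collar is a closed ring completely surrounding $t$. The collared substitution $\sigma'$ sends $T$ to a patch of collared tiles whose underlying uncollared geometry is exactly $\sigma(t)$: the interior tiles of $\sigma(t)$ have all their collars inside $\sigma(t)$, while the tiles on $\partial\sigma(t)$ acquire determined collars from the ring $\sigma(n(t))$, which lies outside $\sigma(t)$ but is still prescribed by $T$. Thus $\sigma'$, and hence every iterate $(\sigma')^k$, is well defined on collared tiles.

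Next I would analyze what data $T$ forces $k$ steps out. Since $T$ determines the patch $t\cup n(t)$ as uncollared tiles, applying $\sigma^k$ to this patch determines the uncollared identity of every tile in $\sigma^k(t)\cup\sigma^k(n(t))$. Because $n(t)$ was a full topological ring around $t$, the set $\sigma^k(n(t))$ is a full ring around $\sigma^k(t)$, and in tile-diameter units its thickness is on the order of $\lambda^k$, where $\lambda>1$ is the inflation constant of $\sigma$. In particular, choosing $k$ large enough (and using that there are only finitely many collared tile types, so the required $k$ may be taken uniform), this ring has thickness at least two tiles all the way around $\sigma^k(t)$.

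To finish, I would verify border forcing at level $k$ for $\sigma'$. The collared neighbors of $(\sigma')^k(T)$ correspond to the tiles $s$ in $\sigma^k(n(t))$ immediately adjacent to $\sigma^k(t)$; their uncollared identities are already determined by $T$. The collared label of each such $s$ requires only $s$ together with its own immediate neighbors, all of which lie within two tile layers of $\sigma^k(t)$. Under the thickness bound above, every one of those neighbors is contained in $\sigma^k(t)\cup\sigma^k(n(t))$, so its uncollared identity, and therefore the collared label of $s$, is forced by $T$. Hence two occurrences of the same $k$-supertile in the collared substitution have identical patterns of collared neighbors, which is exactly border forcing at level $k$.

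The main obstacle is the geometric step in the second paragraph: making precise that the ring $\sigma^k(n(t))$ really does surround $\sigma^k(t)$ with growing thickness, uniformly in the collared tile type. This uses that $n(t)$ is a closed topological ring (so $\sigma^k$ preserves the surround property), that $\sigma$ is an inflation by $\lambda>1$, and finite local complexity with full-edge-to-full-edge matching, which together bound the aspect ratios of the iterated ring and let a single exponent $k$ work for all finitely many collared tile types simultaneously.
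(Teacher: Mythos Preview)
Your proposal is correct and follows essentially the same approach as the paper's sketch: the collar of a collared tile determines a full ring of neighbors, substitution inflates this ring by the factor $\lambda^k$, and once the ring is at least two tile-diameters thick it determines the collared labels of all immediate neighbors of the $k$-supertile. Your write-up is simply more explicit about the geometric details (uniformity in the tile type, the ring remaining a topological surround under iteration) that the paper leaves implicit.
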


\begin{proof}[Sketch of proof] A collared tile is a tile together with a 
pattern of nearest neighbors, thereby determining all the tiles in 
at least an $\epsilon$-neighborhood. After substituting $n$ times, 
we obtain an $n$-supertile together with a pattern of neighboring 
$n$-supertiles, thereby determining all the tiles within a distance $\lambda^n
\epsilon$. Pick $n$ big enough that $\lambda^n \epsilon$ is more than twice
the diameter of the largest tile. The $n$-times substituted (collared) tile
then determines its neighboring uncollared tiles 
{\em and the neighbors of these neighbors\/}, and hence
determines its neighboring collared tiles. 
\end{proof}

For instance, in the Fibonacci example, $\sigma^2(A_1) = (aba)aba(ab)
= (BA_2)A_1BA_2(A_1)$, $\sigma^2(A_2)=(ab)aba(aba)=(B)A_3BA_2(A_1B)$,
$\sigma^2(A_3)=(ab)aba(ab)=(B)A_3BA_2(A_1)$ and 
$\sigma^2(B)=(aba)ab(aba) = (BA_2)A_1B(A_3B)$. In each case, substituting
a collared tile twice determines at least two extra tiles on each side
of the 2-supertile, and so determines the collared tile on each side of the
supertile. Combining this theorem with the first Anderson-Putnam construction 
yields the following
\begin{thm}\cite{AP}  Let $\Omega$ be a tiling space derived from a substitution
$\sigma$. Assume that there are only finitely many tile types, 
up to translation, and that the tiles are polygons (or polyhedra) that 
meet full edge to full edge (or full face to full face). Then $\Omega$ is 
homeomorphic to $\ilim(\tilde \Gamma_{AP}, \sigma)$, 
where $\tilde \Gamma_{AP}$ is 
constructed using once-collared tiles. 
\end{thm}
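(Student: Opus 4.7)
The plan is to stitch together the two preceding results. The earlier theorem asserts that rewriting the substitution $\sigma$ in terms of once-collared tiles produces a substitution $\tilde \sigma$ that forces the border, and Theorem \ref{AP1} asserts that any border-forcing, finite-local-complexity substitution tiling space is homeomorphic to the inverse limit of its Anderson-Putnam complex under the substitution. So the argument reduces to identifying the original space $\Omega$ with the tiling space $\tilde \Omega$ of the collared substitution, and then feeding $\tilde \sigma$ into Theorem \ref{AP1}.

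First I would set up the relabeling map $R: \Omega \to \tilde \Omega$. Given $\ttt \in \Omega$, every tile $t \subset \ttt$ has a well-defined local neighborhood (itself plus all tiles touching it), and finite local complexity guarantees that only finitely many such neighborhood patterns occur. Define $R(\ttt)$ to be the tiling obtained by replacing each tile $t$ by its collared version carrying the label of its equivalence class. The key points to verify are that (i) $R(\ttt)$ is a legal tiling under the collared substitution $\tilde \sigma$, because the substitution of an uncollared tile, once collared, yields exactly the pattern specified by $\tilde \sigma$, and (ii) $R$ has an obvious continuous inverse, namely forgetting the collared labels, so $R$ is a homeomorphism $\Omega \to \tilde \Omega$.

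Next I would observe that the collared substitution $\tilde \sigma$ satisfies the hypotheses of Theorem \ref{AP1}: it has finite local complexity (the set of collared tile types is finite since $\Omega$ has FLC and each tile has only finitely many neighbor patterns), and by the preceding theorem it forces the border. Applying Theorem \ref{AP1} to $(\tilde \Omega, \tilde \sigma)$ gives $\tilde \Omega \cong \ilim(\tilde \Gamma_{AP}, \tilde \sigma)$, where $\tilde \Gamma_{AP}$ is the Anderson-Putnam complex of the collared substitution. Composing with the homeomorphism $R$ yields the desired identification $\Omega \cong \ilim(\tilde \Gamma_{AP}, \sigma)$ (writing $\sigma$ for $\tilde \sigma$ as in the statement).

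The main obstacle, such as it is, lies in checking the first step cleanly: one must confirm that the equivalence relation defining collared tiles is compatible with the substitution (so that legal collared tilings are in bijection with legal uncollared tilings under $R$) and that $R$ is a homeomorphism in the tiling metric. Both facts follow from finite local complexity and the fact that collared tiles have the same underlying geometry as uncollared tiles — the label encodes only local pattern information, so $R$ and its inverse are both locally determined and hence continuous. Once this bookkeeping is in place, the theorem is a clean combination of the two previously established results.
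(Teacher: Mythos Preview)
Your proposal is correct and matches the paper's approach exactly: the paper simply states that ``combining this theorem with the first Anderson-Putnam construction yields'' the result, i.e., collaring forces the border (the preceding theorem) and then Theorem~\ref{AP1} applies. You have supplied more detail than the paper does, in particular the explicit relabeling homeomorphism $R:\Omega\to\tilde\Omega$, but the underlying argument is the same.
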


\subsection{G\"ahler's construction}

One can iterate the collaring construction, rewriting an arbitrary tiling
space $\Omega$ in terms of collared tiles, then in terms of 
collared collared tiles (i.e., two
tiles of the same type are identified only if they have the same pattern
of nearest and second-nearest neighbors), and more generally $n$-times 
collared tiles. Let $\Gamma_G^n$ be the Anderson-Putnam complex constructed
from the $n$-times collared tiles. There is a natural quotient 
map $q_n: \Gamma_G^n \to \Gamma_G^{n-1}$ that merely forgets about the $n$-th 
nearest neighbors.  

\begin{thm}Let $\Omega$ be any space of tilings that have finite local 
complexity
with respect to translation. Then $\Omega$ is homeomorphic to the inverse limit
of the approximants $\Gamma_G^n$ under the forgetful maps $q_n$. 
\end{thm}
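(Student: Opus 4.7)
The plan is to construct the obvious map from $\Omega$ into the inverse limit, show it is a continuous bijection, and then invoke compactness. Define $\Phi : \Omega \to \ilim(\Gamma_G^n, q_n)$ by letting $\Phi(\ttt)_n$ be the point of $\Gamma_G^n$ that records the $n$-times collared tile containing the origin of $\ttt$ together with the exact location of the origin inside it (with the usual identifications when the origin lies on a boundary face, so that $\Phi(\ttt)_n$ is well defined as a point of the quotient space $\Gamma_G^n$). Since forgetting the $n$-th layer of neighbors turns an $n$-collared tile into its $(n-1)$-collared underlying tile, one immediately has $q_n(\Phi(\ttt)_n) = \Phi(\ttt)_{n-1}$, so $\Phi(\ttt)$ indeed lies in the inverse limit. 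Continuity of $\Phi$ follows from the tiling metric: two tilings that agree on a large ball about the origin have the same $n$-collared tile at the origin for all $n$ up to the collaring radius corresponding to that ball.

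Next I would prove injectivity. If $\Phi(\ttt) = \Phi(\ttt')$, then the $n$-times collared tile at the origin of $\ttt$ and of $\ttt'$ are identical for every $n$. Under FLC, tile diameters are bounded above and inradii are bounded below by positive constants, so the $n$-collaring of the central tile determines the tiling inside a ball of radius growing at least linearly in $n$. Letting $n \to \infty$, $\ttt$ and $\ttt'$ agree on all of $\R^d$, hence are equal.

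Surjectivity is the heart of the argument, and is the step I expect to be the main obstacle. Given $x = (x_0, x_1, \ldots)$ in the inverse limit, each $x_n$ specifies an $n$-collared tile type (admissible because by construction $\Gamma_G^n$ only contains collared tiles that actually occur in tilings in $\Omega$) along with a coordinate for the origin inside it. Compatibility under $q_n$ says that these descriptions refine one another, so together they describe a nested family of admissible finite patches, each of which is realized as the restriction of some tiling in $\Omega$ to a growing ball about the origin. By the compactness of $\Omega$ (itself a consequence of FLC), a diagonal subsequence of such realizing tilings converges to a tiling $\ttt \in \Omega$ whose restriction to each of these balls is the prescribed patch. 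By construction $\Phi(\ttt) = x$. The delicate part here is making sure the patches prescribed by the collaring data exhaust $\R^d$ and really are consistently realized in a single tiling, which is where FLC together with the bounded-geometry of the tiles is essential.

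Finally, $\Omega$ is compact Hausdorff by FLC, and the inverse limit $\ilim(\Gamma_G^n, q_n)$ is compact Hausdorff as an inverse limit of compact Hausdorff CW complexes. A continuous bijection between compact Hausdorff spaces is automatically a homeomorphism, completing the proof.
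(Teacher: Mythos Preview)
Your argument is correct and follows the same line as the paper's sketch: both rest on the observation that a point in $\Gamma_G^n$ determines at least $n-1$ rings of tiles around the origin, so a compatible sequence $(x_n)$ specifies a nested family of patches exhausting $\R^d$, hence a tiling. The paper's sketch simply asserts that this consistent family of patches \emph{is} a tiling, while you take the slightly more cautious route of realizing each finite patch by an actual element of $\Omega$ and extracting a convergent subsequence via compactness; both are fine, and your added appeal to the compact-Hausdorff/continuous-bijection lemma makes the homeomorphism step explicit where the paper leaves it implicit.
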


\begin{proof}[Sketch of proof] (see \cite{Gaehler, inverse}) 
A point $p_n \in \Gamma_G^n$ is either a point
in an $n$-collared tile, or is the identification of several possible 
points on the boundary
of an $n$-collared tile. Either way, at least $n-1$ rings of tiles around
$p_n$ are specified. The point $p_n$ can then be viewed as 
instructions for building a patch {\em around the origin\/}. A 
sequence $p_0, p_1, \ldots$ is then a consistent set of instructions for 
building larger and larger patches around the origin, whose union is 
$\R^d$. Hence $\ilim (\Gamma_G^n, q_n)$ parametrizes tilings in $\Omega$. 
\end{proof}

G\"ahler's construction is extremely useful for theoretical arguments,
as it applies to all tiling spaces, not just to substitution tiling
spaces. For instance, the identification of integer-valued
pattern-equivariant cohomology with \v Cech cohomology \cite{integer}
is based on this construction. Unfortunately, it has not proven
effective in computing cohomology.  $\check H^k(\Omega)$ does equal $\dlim
H^*(\Gamma_G^n)$, but there is no general procedure for computing
$H^*(\Gamma_G^n)$. The number of cells in $\Gamma_G^n$ grows with $n$, and
it is difficult to do computations that apply simultaneously to all
values of $n$.

\subsection{Barge-Diamond collaring}

The Anderson-Putnam and G\"ahler constructions are  
based on collared {\em tiles\/}. The Barge-Diamond
construction \cite{BD, BDHS} is based on collared {\em points}. 

Let $\ttt \in \Omega$ be a non-periodic substitution tiling. Recall that
non-periodicity implies that the substitution $\sigma$ has an inverse
on $\Omega$. Pick a radius $r$ and consider the equivalence relation on
$\R^d$: $x \sim y$ if the tilings $\ttt-x$ and $\ttt-y$ agree out to
distance $r$ around the origin.  Likewise, let $x \sim_n y$ if the
tilings $\sigma^{-n}(\ttt-x)$ and $\sigma^{-n}(\ttt-y)$ agree out to
distance $r$. That is, if $\ttt-x$ and $\ttt-y$ have the same structure of
$n$-supertiles out to distance $\lambda^n r$.  (In particular, 
they also have the
same structure of ordinary tiles out to distance $\lambda^n r$.) Let
$\Gamma_{BD}^n$ be the quotient of $\R^d$ by $\sim_n$. A priori this
would seem to depend on the tiling $\ttt$, but for minimal tiling spaces
all tilings have the same patterns and give rise to identical
approximants.  Since $x\sim_n y $ implies $x \sim_{n-1} y$, there is a
natural quotient map $q_n: \Gamma_{BD}^n \to
\Gamma_{BD}^{n-1}$. Furthermore, the complexes $\Gamma_{BD}^n$ are
all homeomorphic.
Indeed, if $\ttt$ is a self-similar tiling with
$\sigma(\ttt)=\ttt$, then $x \sim y$ if and only if $\lambda^n x \sim_n
\lambda^n y$, so $\Gamma^n_{BD}$ is just an enlarged copy of
a single space $\Gamma_{BD}$ and the quotient maps $q_n$ are all induced
from the substitution $\sigma$.

The radius $r$ is arbitrary, but for many applications it is
convenient to take $r$ extremely small. The complex $\Gamma_{BD}$ is
then a CW complex comprised of pieces of tiles. For instance, suppose
that $\ttt$ is a 1-dimensional tiling.  Points $x$ and $y$ are identified
if either (1) they are in corresponding places in tiles of the same
type, and are farther than $r$ from the nearest vertex, or (2) they
are in corresponding places in tiles of the same type, within distance
$r$ of a vertex, and the tiles on the other side of the vertices are
the same. If the tiles all have length 1, then the equivalence classes
of the first type form 1-cells of length $1-2r$, one for each tile
type. We call these {\em tile cells\/}. The equivalence classes of the
second type form 1-cells of length $2r$, called {\em vertex flaps\/}, one
for each possible transition from one tile to another. For instance,
in the Fibonacci tiling, the possible 2-tile patches are $aa$, $ab$, 
and $ba$, so $\Gamma_{BD}$ consists of two tile cells ($a$ and $b$)
and three vertex flaps, arranged as in Figure \ref{BD-complex-fig}. 
In the Thue-Morse
tiling, all four transitions $\{aa$, $ab$, $ba$, $bb\}$ are possible,
so we have two edge cells and four vertex flaps, also shown in 
Figure \ref{BD-complex-fig}. 

\begin{figure}
\includegraphics[width=3truein]{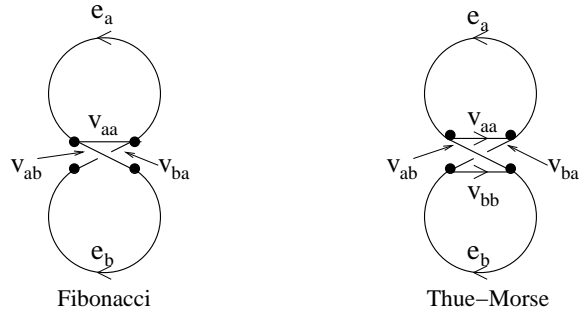}
\caption{Barge-Diamond Complexes for the Fibonacci and Thue-Morse Substitutions}
\label{BD-complex-fig}
\end{figure}

In a 2-dimensional tiling, there are three kinds of 2-cells. {\em Tile
  cells\/} correspond to the interiors of tiles, {\em edge flaps\/}
correspond to points that are within $r$ of an edge, and contain
information about what tile is on the other side of the edge, and {\em
  vertex polygons\/} describe what is happening near a vertex, and have
information about all of the tiles touching the vertex. If the tiles
are unit squares meeting edge-to-edge, 
then the tile cells are $(1-2r)\times(1-2r)$
squares, the edge flaps are $2r \times (1-2r)$ rectangles, and the
vertex polygons are $2r \times 2r$ squares. (Strictly speaking, this
requires using the $L^\infty$ metric on $\R^2$ rather than the
Euclidean metric, to avoid having arcs of circles on the boundaries of
cells.)

\begin{thm}\cite{BD, BDHS} For any positive radius $r$, $\Omega$ is 
homeomorphic  to the inverse limit $\ilim (\Gamma_{BD},\sigma)$.
\end{thm}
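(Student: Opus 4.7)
The plan is to mimic the proof sketch of the G\"ahler theorem, constructing a continuous bijection $\Phi: \Omega \to \ilim(\Gamma_{BD},\sigma)$ and then upgrading it to a homeomorphism by compactness. Given any tiling $\ttt' \in \Omega$, define $\Phi(\ttt')_n$ as follows: by minimality of $\Omega$ there exists $x_n \in \R^d$ such that $\ttt - x_n$ and $\ttt'$ have the same structure of $n$-supertiles on the ball $B_{\lambda^n r}(0)$; let $\Phi(\ttt')_n$ be the $\sim_n$-equivalence class of $x_n$. By the very definition of $\sim_n$ this class is independent of the choice of $x_n$, and independent of the choice of background tiling $\ttt$ (again by minimality).

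The coherence condition $q_n(\Phi(\ttt')_n) = \Phi(\ttt')_{n-1}$ is immediate, since $\sim_n$ refines $\sim_{n-1}$: agreement of the $n$-supertile structure on $B_{\lambda^n r}(0)$ implies agreement of the $(n-1)$-supertile structure on $B_{\lambda^{n-1} r}(0)$. Continuity of $\Phi$ follows because $\Phi(\ttt')_n$ depends only on the restriction of $\ttt'$ to $B_{\lambda^n r}(0)$, which is a continuous function of $\ttt'$ in the tiling metric. Injectivity is equally direct: if $\ttt'_1 \neq \ttt'_2$ in $\Omega$, they must disagree on some ball $B_R(0)$, and any $n$ with $\lambda^n r > R$ then forces $\Phi(\ttt'_1)_n \neq \Phi(\ttt'_2)_n$. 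For surjectivity, a consistent sequence $(p_n) \in \ilim(\Gamma_{BD}^n, q_n)$ specifies a nested family of $n$-supertile patches around the origin of radius at least $\lambda^n r$; because $\lambda^n r \to \infty$ and the $q_n$-compatibility guarantees agreement on overlaps, their union defines a unique tiling $\ttt' \in \Omega$ with $\Phi(\ttt')=(p_n)$. Finally, $\Omega$ is compact and the inverse limit is Hausdorff, so the continuous bijection $\Phi$ is automatically a homeomorphism.

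The step I expect to be most delicate is verifying that a $\sim_n$-class on $\R^d$ genuinely encodes an $n$-supertile patch around the origin of radius $\lambda^n r$, even when the origin lies on a boundary between tiles or supertiles, where several geometric points of $\R^d$ get identified in $\Gamma_{BD}^n$. The explicit description of $\Gamma_{BD}$ given in the excerpt — tile cells together with vertex flaps in dimension one, and tile cells, edge flaps and vertex polygons in dimension two — makes the boundary book-keeping transparent in low dimensions. The general case reduces to the observation that the relation $\sim_n$ is defined precisely by the data needed to reconstruct the tiling at scale $\lambda^n r$, so no additional combinatorial matching rules need to be imposed beyond the compatibilities already built into the inverse limit.
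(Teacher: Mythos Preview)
Your proposal is correct and follows essentially the same approach as the paper's proof, just with considerably more detail: the paper's argument is a one-sentence sketch noting that a point in the inverse limit is a consistent sequence of instructions for tiling balls of radius $\lambda^n r$, which together determine a tiling of all of $\R^d$. Your explicit construction of $\Phi$, the verification of well-definedness via minimality, and the compactness argument to upgrade the continuous bijection to a homeomorphism are all appropriate elaborations of that sketch.
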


\begin{proof} As with the Anderson-Putnam construction, a point in the
  inverse limit is a sequence of instructions for tiling larger and
  larger regions of the plane, insofar as the $n$-th approximant
  determines the structure of a tiling out to distance $\lambda^n r$.
\end{proof}

The complexes $\Gamma_{BD}^n$ are all the same (up to scale), 
so it is relatively easy to compute
$H^*(\Gamma_{BD}^n)=H^*(\Gamma_{BD})$. Unfortunately, the map $\sigma:
\Gamma_{BD} \to \Gamma_{BD}$ is typically not a cellular
map. For instance, for a square tiling $\sigma$ takes a $2r \times 2r$ 
vertex polygon to a $2\lambda r \times 2\lambda r$ square, which is a 
vertex polygon plus a small piece of the adjacent edge flaps and tile cells. 
To do our computations we need to use a map $\tilde \sigma$ that
is cellular and homotopic to $\sigma$. (One way to get such a map
$\tilde \sigma$ is to compose $\sigma$ with a flow that expands tile cells 
slightly at the expense of the edge cells and vertex polygons. The
details are {\em not\/} important.)
The map $\tilde \sigma$
sends vertex polygons to vertex polygons, edge flaps to a union of 
edge flaps and
vertex polygons, and tile cells to a union of all three kinds of cells.  Let
$\tilde \Omega = \ilim(\Gamma_{BD}, \tilde \sigma)$.

\begin{thm} The \v Cech cohomology of $\tilde \Omega$ is isomorphic to the
  \v Cech cohomology of $\Omega$.
\end{thm}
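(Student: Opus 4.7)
The plan is to reduce the statement to a formal consequence of the earlier theorem identifying the \Cech cohomology of an inverse limit with the direct limit of the \Cech cohomologies of its approximants. Both $\Omega$ and $\tilde\Omega$ are inverse limits over the same sequence of approximants (every $\Gamma_{BD}^n$ is a rescaled copy of the single complex $\Gamma_{BD}$); only the bonding maps differ, $\sigma$ versus $\tilde\sigma$. Applying the theorem twice gives
\begin{equation*}
\check H^*(\Omega)\;\cong\;\dlim\bigl(\check H^*(\Gamma_{BD}),\sigma^*\bigr),
\qquad
\check H^*(\tilde\Omega)\;\cong\;\dlim\bigl(\check H^*(\Gamma_{BD}),\tilde\sigma^*\bigr).
\end{equation*}
So it suffices to prove that $\sigma$ and $\tilde\sigma$ induce the same pullback on $\check H^*(\Gamma_{BD})$, for then the two direct systems are literally identical and their direct limits coincide.

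To obtain $\sigma^*=\tilde\sigma^*$, I would invoke homotopy invariance of \Cech cohomology for CW complexes (which holds because, as noted in the first theorem of Section 2, \Cech cohomology agrees with singular cohomology on such spaces, and singular cohomology is homotopy invariant). The complex $\Gamma_{BD}$ is a finite CW complex, and by construction $\tilde\sigma$ was defined to be homotopic to $\sigma$: one composes $\sigma$ with a flow on $\Gamma_{BD}$ that gently enlarges the tile cells at the expense of the edge flaps and vertex polygons. Running the flow continuously from time $0$ to time $1$ provides a continuous one-parameter family interpolating between $\sigma$ and $\tilde\sigma$. Homotopy invariance then yields $\sigma^*=\tilde\sigma^*$ on $\check H^*(\Gamma_{BD})$, and the theorem follows.

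The main thing to verify carefully — and really the only nontrivial point — is that the modifying flow is well defined as a continuous map on the quotient CW complex $\Gamma_{BD}$, i.e.\ that the local expansion of tile cells is compatible with all of the boundary identifications that glue tile cells, edge flaps, and vertex polygons together. Since the flow only rescales a small collar neighborhood of each cell and does so uniformly on cells of the same combinatorial type, the identifications (which are themselves defined by combinatorial type of surrounding patches) are respected; this makes the interpolation $H_t=\Phi_t\circ\sigma$ a genuine homotopy on $\Gamma_{BD}$. With this verification in place, the argument is purely formal: homotopic bonding maps on a common approximant produce the same direct system, and hence the same \Cech cohomology in the inverse limit.
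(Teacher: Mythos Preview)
Your argument is correct and is essentially identical to the paper's proof: both use the inverse-limit description of \v Cech cohomology to write $\check H^*(\Omega)\cong\dlim(\check H^*(\Gamma_{BD}),\sigma^*)$ and $\check H^*(\tilde\Omega)\cong\dlim(\check H^*(\Gamma_{BD}),\tilde\sigma^*)$, then invoke the homotopy $\sigma\simeq\tilde\sigma$ to conclude $\sigma^*=\tilde\sigma^*$ and hence equality of the direct limits. The paper presents this as a one-line chain of equalities and does not dwell on the well-definedness of the homotopy (explicitly writing ``the details are \emph{not} important''), whereas you add a brief justification for why the flow respects the identifications in $\Gamma_{BD}$; but the underlying logic is the same.
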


\begin{proof} Since $\sigma$ and $\tilde \sigma$ are homotopic,
  $\tilde \sigma^* = \sigma^*$ as operators on $H^*(\Gamma_{BD})$.
  Then $\check H^*(\tilde \Omega) = \check H^*(\ilim(\Gamma_{BD},\tilde
  \sigma))= \dlim H^*(\Gamma_{BD}, \tilde \sigma^*) = \dlim
  H^*(\Gamma_{BD},\sigma^*) = \check H^*(\ilim(\Gamma_{BD},\sigma)) =
  \check H^*(\Omega)$.
\end{proof}

This theorem does {\em not\/} say that $\tilde \Omega$ and $\Omega$ are 
homeomorphic. In many cases they are not. However, their cohomologies
are the same, so we can always use the inverse limit structure of 
$\tilde \Omega$ to compute the cohomology of $\Omega$. 

\subsubsection{One dimensional results}\cite{BD}

Let $S_0\subset \Gamma_{BD}$ be the sub-complex of vertex flaps, and
let $S_1 = \Gamma_{BD}$. Since $\tilde \sigma$ maps $S_0$ to $S_0$ and
$S_1$ to $S_1$, we can consider the inverse limit space $\sss_i = \ilim
(S_i, \tilde \sigma)$. Since $\sss_0 \subset \sss_1$, we can compute 
$\check H^*(\Omega) = \check H^*(\sss_1)$ by computing 
$\check H^*(\sss_0)$ and
the relative cohomology $\check H^*(\sss_1, \sss_0)$ and then combining them 
with the long exact sequence 
\begin{equation}
0 \to \check H^0(\sss_1, \sss_0) \to \check H^0(\sss_1) \to
\check H^0(\sss_0) \to \check H^1(\sss_1,\sss_0) \to \check H^1(\sss_1) \to 
\check H^1(\sss_0) \to 0
\end{equation}
We examine each of these terms.
$\check H^0(\sss_1, \sss_0)$ is the direct limit (under $\tilde \sigma^*$)
of $H^0(S_1, S_0)$. Since $S_1$ is connected, this is zero. Likewise,
$\check H^0(\sss_1) = \dlim H^0(\sss_1) = \Z$. Since $\tilde \sigma$ maps each
cell of $S_0$ to a single cell, $\tilde \sigma$ merely permutes the cells
of the eventual range $S_0^{ER}$. Thus $\dlim H^*(S_0) = H^*(S_0^{ER})$.
If $S_0^{ER}$ has $k$ connected components and has $\ell$ loops, then
$\check H^0(\sss_0)=\Z^k$ and $\check H^1(\sss_0)=\Z^\ell$. Meanwhile
the quotient space $S_1/S_0$ is a wedge of circles, one for each tile type. 
$H^1(S_1,S_0)=\Z^N$, where $N$ is the number of tile types, and 
$\check H^1(\sss_1, \sss_0) = \dlim (\Z^N, A^T)$, where $A$ is the substitution
matrix. Combining these observations, we have the long exact sequence
\begin{equation}0 \to \Z \to \Z^k \to \dlim (\Z^N , A^T) \to 
\check H^1(\Omega) \to \Z^\ell \to 0.
\end{equation}
Using reduced cohomology, this can be further simplified to 
\begin{equation}0 \to \Z^{k-1} \to \dlim (\Z^N , A^T) \to 
\check H^1(\Omega) \to \Z^\ell \to 0. \label{BD1Dsimple}
\end{equation}

In the Fibonacci tiling, $A= \left ( \begin{smallmatrix} 1 & 1 \cr 1 & 0
\end{smallmatrix} \right )$ and $S_0$ consists of three vertex flaps: 
$aa$, $ab$, and $ba$. These form a contractible set, so $k-1=\ell=0$, 
and $\check H^1(\Omega) = \dlim (\Z^2, A^T) = \Z^2$. In fact, whenever 
$S_0^{ER}$ is contractible, $\tilde H^0(S_0^{ER})$ and 
$H^1(S_0^{ER})$ vanish and $H^1(\Omega)$ is isomorphic to $\dlim (\Z^N, A^T)$. 

We can also describe the Fibonacci tiling using collared
tiles $A_1=(a)a(b)$, $A_2=(b)a(a)$, $A_3=(b)a(b)$, and $B=(a)b(a)$. 
Collaring the Fibonacci tiles and then applying the Barge-Diamond 
construction is overkill, but this example shows the interplay
of the substitution matrix and the cohomology of $S_0^{ER}$.
Our complex $\Gamma_{BD}$ 
has four tile cells and five vertex flaps, namely $A_1B$, $A_2A_1$, $A_3B$,
$BA_2$, and $BA_3$. 
However, $A_1B$ and $A_3B$ are not in $S_0^{ER}$, since all supertiles
start with $A_1$, $A_2$, or $A_3$. $S_0^{ER}$ consists of just the 
flaps $A_2A_1$, $BA_2$ and $BA_3$, yielding 
$k=2$ and $\ell=0$. The substitution matrix is $\left ( 
\begin{smallmatrix} 0&1&1&0 \cr 0&0&0&1 \cr 1&0&0&0 \cr 1&1&1&0 
\end{smallmatrix} \right )$. This matrix has rank 3, with
eigenvalues $(1 \pm \sqrt{5})/2$, $-1$, and 0, and $\dlim(\Z^4,
A^T) = \Z^3$. We then have $0 \to \Z \to \Z^3 \to \check H^1(\Omega) \to 0$. 
After checking that the quotient of $\Z^3$ by $\Z$ is $\Z^2$ (with no 
torsion terms), we again obtain $\check H^1(\Omega) = \Z^2$. 

In the Thue-Morse tiling,
$M= \left ( \begin{smallmatrix} 1 & 1 \cr 1 & 1
\end{smallmatrix} \right )$ and $S_0$ consists of four vertex flaps that form
a loop. Now $\dlim(\Z^2, A^T) = \Z[1/2]$ and $k=\ell=1$, so we have 
$ 0 \to \Z[1/2] \to \check H^1(\Omega) \to \Z \to 0$. Since $\Z$ is free, this 
sequence splits, so $\check H^1(\Omega) = \Z[1/2]\oplus \Z$. 

\subsubsection{Higher dimensions}\cite{BDHS}
In higher dimensions the procedure is similar, but the results cannot
be expressed in a single exact sequence such as (\ref{BD1Dsimple}). In
two dimensions, we consider the complex $S_0$ of vertex polygons,
$S_1$ of vertex polygons and edge flaps, and $S_2 = \Gamma_{BD}$.  We
also consider the inverse limits $\sss_i = \ilim(S_i, \tilde
\sigma)$. As in one dimension, $\tilde \sigma$ maps each vertex
polygon to a single vertex polygon, so $\check H^*(\sss_0) =
H^*(S_0^{ER})$. However, $S_0$ is a 2-dimensional complex, so
computing the cohomology of $S_0^{ER}$ is more than just counting
connected components and loops.

The next step is to consider $\check H^*(\sss_1, \sss_0) = \dlim (\tilde
H^*(S_1/S_0), \tilde \sigma^*)$. This involves only the eventual range
of $S_1$, but is typically a complicated calculation. The quotient
space $S_1/S_0$ breaks into several pieces, one for each direction that an
edge can point. In general, the pieces are not particularly simple, and it 
takes work to understand how $\tilde \sigma^*$ acts on $\tilde
H^*(S_1/S_0)$.
Once $\check H^*(\sss_1, \sss_0)$ is computed, we combine
it with $\check H^*(\sss_0)$ via the long exact sequence
\begin{equation}\label{0to1}
\cdots \to \check H^k(\sss_1, \sss_0) \to \check H^k(\sss_1) \to \check H^k(\sss_0) \to \check H^{k+1}(\sss_1, \sss_0) \to \cdots
\end{equation}
to compute $\check H^*(\sss_1)$.

The relative cohomology $\check H^*(\sss_2, \sss_1)$ is simpler. The
quotient space $S_2/S_1$ is a wedge of spheres, so $\tilde H^0=\tilde
H^1=0$ and $\tilde H^2 = \Z^N$.  $\check H^k(\sss_2, \sss_1)$ equals
$\dlim(\Z^n, A^T)$ when $k=2$, and vanishes when $k=0$ or 1.  The
final stage is combining $\check H^*(\sss_1)$ and $\check H^*(\sss_2,
\sss_1)$ with the long exact sequence
\begin{equation}\label{1to2}
\cdots \to \check H^k(\sss_2, \sss_1) \to \check H^k(\sss_2) \to \check H^k(\sss_1) \to \check H^{k+1}(\sss_2, \sss_1) \to \cdots
\end{equation}

\begin{ex} Consider a tiling of $\R^2$ featuring three square tiles
$A$, $B$, and $C$, and generated by the substitution $\framebox{*} \to 
\begin{matrix} \framebox[16pt]{\mathstrut A}\framebox[16pt]{\mathstrut *} 
\\ \framebox[16pt]{\mathstrut B} \framebox[16pt]{\mathstrut C} 
\end{matrix}$, where ``$*$'' is shorthand for
$A$, $B$ or $C$. This substitution does not force the border, so 
collaring is needed to compute its cohomology. $S_0$ involves many vertex
polygons, but each of these maps to a vertex polygon of the form
$\begin{matrix}\framebox[16pt]{\mathstrut C}\framebox[16pt]{\mathstrut B} \\
\framebox[16pt]{\mathstrut *}\framebox[16pt]{\mathstrut A}\end{matrix}$. 
$S_0^{ER}$ is 
contractible, consisting of three squares glued together at their north
and east edges, so $\check H^0(\sss_0)=\Z$ and $\check H^1(\sss_0)=
\check H^2(\sss_0)=0$. 

$S_1/S_0$ consists of vertical and horizontal edge flaps. The vertical
edge flaps are $B|C$, $C|B$, $A|*$ and $*|A$, but only $C|B$ and $*|A$ 
survive to the eventual range. This portion of $S_1^{ER}/S_0$ retracts 
to the wedge of two circles, and $\tilde \sigma^*$ acts on its first
cohomology by the matrix $\left ( \begin{smallmatrix} 1&1 \cr 1&1
\end{smallmatrix}\right )$, yielding a direct limit of $\Z[1/2]$. The 
horizontal edge flaps are similar, giving another factor of $\Z[1/2]$, 
so $\check H^1(\sss_1, \sss_0)=\Z[1/2]^2$ and $\check H^0(\sss_1,\sss_0) = 
\check H^2(\sss_1, \sss_0)=0$. 

$S_2/S_0$ is a wedge of three spheres, and the only nontrivial cohomology
is $H^2=\Z^3$. This transforms via $A^T=\left ( \begin{smallmatrix}
2&1&1 \cr 1&2&1 \cr 1&1&2 \end{smallmatrix} \right )$, so $\check H^2(\sss_2,
\sss_1) = \dlim(\Z^3, A^T)$ and $\check H^0(\sss_2,\sss_1) = 
\check H^1(\sss_2, \sss_1)=0$. 

We combine these relative cohomologies using the long exact sequences
(\ref{0to1}) and (\ref{1to2}). The first of these yields:
\begin{equation} 0 \to 0 \to \check H^0(\sss_1) \to \Z \to 
\Z[1/2]^2 \to \check H^1(\sss_1) \to 0, 
\end{equation}
so $\check H^0(\sss_1)=\Z$ and $\check H^1(\sss_1) = \Z[1/2]^2$ (and 
$\check H^2(\sss_1)=0$). The second yields:
\begin{equation} 0 \to \check H^1(\sss_2) \to \Z[1/2]^2 \to \dlim(\Z^3,A^T)
\to \check H^2(\sss_2) \to 0.
\end{equation}
All maps commute with $\tilde \sigma^*$. Since the $\Z[1/2]^2$ terms double
with substitution, and since the eigenvalues of $A^T$ are 1, 1, and 4, 
the map from $\Z[1/2]^2$ to $\dlim(\R^3,A^T)$ must be zero. We then have
\begin{equation} \check H^0(\sss_2)=\Z, \qquad \check H^1(\sss_2)=\Z[1/2]^2,
\qquad \check H^2(\sss_2) = \dlim(\Z^3,A^T) = \Z[1/4] \oplus \Z^2.
\end{equation}
(We write $\Z[1/4]$ rather than $\Z[1/2]$ in $\check H^2$ to 
emphasize that this term scales by 4 under substitution.)
This is the same cohomology as the half-hex substitution. In fact, this
tiling space is homeomorphic to the half-hex tiling space. 
\end{ex}

\subsection{Rotations and other symmetries}

A natural question about any pattern is ``what are its symmetries?''
An aperiodic tiling cannot have any translational symmetries, but it
can have rotational or reflectional symmetries. We consider actions of
reflection and rotation (and translation, of course) on the tiling
space $\Omega$, and examine how various quantities transform under
that group action.

\subsubsection{Decomposing by representation}

Rotating a tile and then taking its boundary is the same as taking the
boundary and then rotating. Likewise, rotations commute with {\em
  co}\/boundaries, and in most cases rotations commute with
substitution, so it makes sense to decompose our cochain complexes,
and the cohomology of our tiling space, into representations of
whatever rotation group $G$ acts on our tiling space. By Schurr's
Lemma, neither the coboundary nor substitution can mix different
representations, and our calculations can proceed one representation
at a time.

The trouble with this approach is that representations are vector
spaces, and our cochain complexes take values in $\Z$. We therefore
consider the cohomology of tiling spaces with values in $\R$ rather
than $\Z$. In the process we lose information about torsion and
divisibility, but that's the price we have to pay.

\begin{figure}
\includegraphics[width=2.2truein]{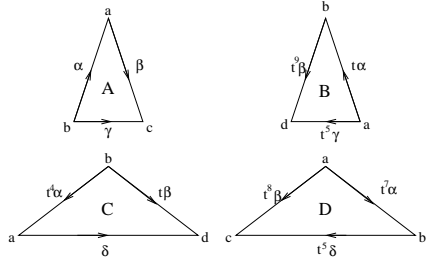}
\caption{Four types of Penrose tiles}
\label{pen-tiles}
\end{figure}

\begin{figure}
\includegraphics[width=3truein]{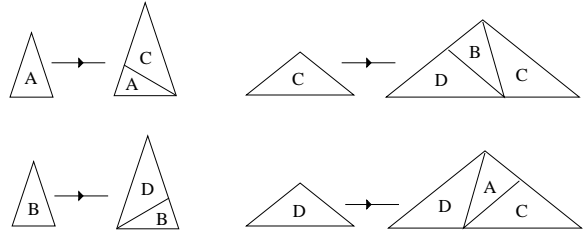}
\caption{The Penrose substitution}
\label{pen-sub}
\end{figure}

For example, the tiles and substitution rules for the Penrose tilings
are shown in Figures \ref{pen-tiles} and \ref{pen-sub}.  There are
four types of tiles, each in 10 orientations, and four types of edges,
each in 10 orientations.  There are only four kinds of vertices
$a,b,c,d$ each of which can sit in the center of a pattern with 5-fold
rotational symmetry.  This means that $a=t^2a$, $b=t^2b$, $c=t^2c$ and
$d=t^2 d$, where $t$ is a rotation by $\pi/5$. In fact, $a=tb$ and
$b=ta$, as can be seen from the fact that the $\alpha$ edge of $A$ 
runs from $b$ to $a$ while the $t\alpha$ edge of $B$ runs from $a$ to $b$.
Likewise, $c=td$ and $d=tc$. This tiling forces the border, so we do
not need to collar.

The group $G=\Z_{10}$ acts on the Anderson-Putnam complex $\Gamma$ by
permuting the tiles, and the eigenvalues of the generator $t$ are the
10th roots of unity.  Each tile type, and each edge type, can be
described by the module $\R[t]/(t^{10}-1)$. The polynomial $t^{10}-1$
factors as $(t-1)(t+1)(t^4+t^3+t^2+t+1)(t^4-t^3+t^2-t+1)$, with the
factors corresponding to the primitive first, second, 5th and 10th
roots, respectively. Each factor also corresponds to a
representation. Since $t^2$ acts trivially on the vertices, only the
representations with $t = \pm 1$ appear in $C_0$.  More specifically,
when working with the Anderson-Putnam complex, our chains complexes
are:
\begin{eqnarray}
C_0(\Gamma)  & = &[\R[t]/(t^2-1)]^2 \cr &=& [\R[t]/(t-1)]^2 \oplus [\R[t]/(t+1)]^2 \cr
C_1(\Gamma) & = & [\R[t]/(t^{10}-1)]^4 \cr &=& \Big [\R[t]/(t-1)  \oplus \R[t]/(t+1)  \oplus
\R[t]/(t^4+t^3+t^2+t+1)  \oplus \R[t]/(t^4-t^3+t^2-t+1)\Big ]^4 \cr 
C_2(\Gamma) & = & [\R[t]/(t^{10}-1)]^4 \cr &=& \Big [\R[t]/(t-1)  \oplus \R[t]/(t+1)  \oplus
\R[t]/(t^4+t^3+t^2+t+1)  \oplus \R[t]/(t^4-t^3+t^2-t+1)\Big ]^4 \cr 
\end{eqnarray}
The complexes $C^k(\Gamma)$ are the dual spaces of $C_k(\Gamma)$.

 The matrices for the boundary maps
$\partial_1: C_1 \to C_0$ and $\partial_2: C_2 \to C_1$ are:
\begin{equation}
\partial_1 = \begin{pmatrix} 1-t & -1 & -t & -1 \cr 0 & 1&1&t \end{pmatrix};
\qquad \partial_2 = \begin{pmatrix}-1&t&t^4&-t^7 \cr -1&t^9&-t&t^8 \cr
1&-t^5 & 0&0 \cr 0&0&1&-t^5 \end{pmatrix}
\end{equation}
in the representations $t = \pm 1$. In the other representations $\partial_2$
is the same, but $\partial_1$ is identically zero (since $C_0=0$). The
coboundary maps $\delta_0$ and $\delta_1$ are the transposes of 
$\partial_1$ and $\partial_2$, only with $t$ replaced by
$t^{-1}$. 

In the $t=1$ representation, $\delta_1$ has rank 2 and $\delta_0$ has
rank 1, and we get $H^0=H^1=\R$ and $H^2=\R^2$. These are the elements of 
cohomology that are invariant under rotation. We say that this portion of the cohomology
rotates like a scalar. 

In the $t=-1$ representation, $\delta_0$ and $\delta_1$ are each rank 2,
and we get $H^2=\R^2$ and $H^1=H^0=0$. This portion of the cohomology
rotates like a {\em pseudoscalar}, flipping sign with every 36 degree rotation. 

In the representation with $t^5=1$ but $t \ne 1$ (that is, with $t^4+t^3+t^2+t
+1=0$), $\delta_1$ is a rank-4 isomorphism, so all cohomologies vanish.
In the representations with $t^5=-1$ (but $t \ne -1$), $\delta_1$ has rank
3, so $H^1=H^2=\R[t]/(t^4-t^3+t^2-t+1)$.  This portion of the cohomology rotates 
like a {\em vector}, flipping sign after a 180 degree rotation.

Substitution acts on 2-cells by the matrix 
$\left( \begin{smallmatrix} t^7&0&0&t^4 \cr 0 & t^3 & t^6 & 0 \cr t^3 &0&t^4&1
\cr 0 & t^7&1&t^6 \end{smallmatrix} \right )$ and on 1-cells by
$\left( \begin{smallmatrix} 0&0&0&t^8 \cr t^4 &0&-t^7 & 0 \cr -t^7 & 0 &0&0
\cr 0 & -t^3 & 0 & -t^3 \end{smallmatrix} \right )$. Both of these matrices
are invertible for all representations (in fact, both have determinant 1), 
so $\check H^k(\Omega) = H^k(\Gamma)$ for $k=0,1,2$. In summary:
\begin{eqnarray}
\check H^0(\Omega) = H^0(\Gamma) &=& \R[t]/(t-1) \cr 
\check H^1(\Omega)= H^1(\Gamma) & = & \R[t]/(t-1) \oplus \R[t]/(t^4-t^3+t^2-t+1) \cr 
\check H^2(\Omega)= H^2(\Gamma) & = & (\R[t]/(t-1))^2 \oplus (\R[t]/(t+1))^2 \oplus \R[t]/(t^4-t^3+t^2-t+1)
\end{eqnarray}

The upshot is that $\check H^0(\Omega) = \R$ and is rotationally
invariant, which is no surprise, since the generator is the constant
function. $\check H^1(\Omega)= \R^5$, of which 4 dimensions rotate
like vectors, with $t^5=-1$, and one is rotationally
invariant. $\check H^2(\Omega)=\R^8$, consisting of a rotationally
invariant $\R^2$, a piece $\R^2$ that rotates like a pseudoscalar, and
a piece $\R^4$ that rotates like a vector.

\subsubsection{Three tiling spaces}

For 2-dimensional substitution like the Penrose tiling and the chair
tiling, there are actually three tiling spaces to be considered. We
have been considering the space $\Omega$ that is the (translational)
orbit closure of a single tiling. This would be, for instance, the set
of all chair tilings where the edges are parallel to the coordinate
axes. We can also consider a larger space $\Omega_{rot}$ of all
rotations of tilings in $\Omega$. Finally we can consider the quotient
space $\Omega_0$ of tilings modulo rotations. $\Omega_0$ can either be
viewed as $\Omega_{rot}/S^1$ or as the quotient of $\Omega$ by the
discrete group of rotations that acts on $\Omega$. For the Penrose
space, we would have $\Omega_0 = \Omega/\Z_{10}$, while for the chair
tiling we would have $\Omega_0 = \Omega/\Z_4$.

The cohomologies of the three spaces are related as follows \cite{ORS, BDHS}:

\begin{thm}\label{XxS1} 
Working with real or complex coefficients, the cohomology of 
$\Omega_0$ is isormorphic to the rotationally invariant part of the cohomology
of $\Omega$. The cohomology of $\Omega_{rot}$ is isomorphic to the cohomology of 
$\Omega_0 \times S^1$. 
\end{thm}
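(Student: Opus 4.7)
The plan is to reduce both assertions to the classical fact that for a finite group $G$ acting on a reasonable space $X$, the averaging (transfer) operator $\tau=\frac{1}{|G|}\sum_{g\in G} g^{*}$ is a projection from $H^{*}(X;\R)$ onto $H^{*}(X;\R)^{G}$, and $H^{*}(X/G;\R)\cong H^{*}(X;\R)^{G}$. This requires $|G|$ to be invertible in the coefficient ring, which is exactly why the statement is restricted to $\R$ or $\C$ coefficients.

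For the first assertion, let $G$ be the discrete rotation group (e.g.\ $\Z_{10}$ for Penrose, $\Z_{4}$ for the chair). By definition $\Omega_{0}=\Omega/G$. To apply the averaging principle in the \v Cech setting, I would realize $\Omega$ as an inverse limit of approximants $\Gamma^{n}$ (Anderson--Putnam or G\"ahler) on each of which $G$ acts cellularly, and which can be chosen so that the bonding maps are $G$-equivariant. Then $\Omega_{0}$ is the inverse limit of $\Gamma^{n}/G$. On each level, $H^{*}(\Gamma^{n}/G;\R)\cong H^{*}(\Gamma^{n};\R)^{G}$ by the transfer argument; since taking $G$-invariants commutes with direct limits of $\R$-vector spaces (it is an exact functor in characteristic zero), passing to $\dlim$ gives $\check H^{*}(\Omega_{0};\R)\cong \check H^{*}(\Omega;\R)^{G}$.

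For the second assertion, the key geometric step is to identify $\Omega_{rot}$ with $(\Omega\times S^{1})/G$, where $G$ acts diagonally: on $\Omega$ by its usual rotation action and on $S^{1}=\R/2\pi\Z$ by translation by $2\pi/|G|$. The map $\Omega\times S^{1}\to\Omega_{rot}$, $(T,\theta)\mapsto R_{\theta}T$, factors through this diagonal $G$-action, and one checks it is a homeomorphism onto $\Omega_{rot}$. Applying the averaging principle again (as in the previous paragraph, via $G$-equivariant approximants times $S^{1}$) yields $\check H^{*}(\Omega_{rot};\R)\cong \check H^{*}(\Omega\times S^{1};\R)^{G}$. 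By K\"unneth this equals $(\check H^{*}(\Omega)\otimes H^{*}(S^{1}))^{G}$. Now the action of $G$ on the $S^{1}$ factor is by rotation, which is isotopic to the identity, so it acts trivially on $H^{*}(S^{1})$. Therefore the $G$-invariants split as $\check H^{*}(\Omega)^{G}\otimes H^{*}(S^{1})\cong \check H^{*}(\Omega_{0})\otimes H^{*}(S^{1})\cong \check H^{*}(\Omega_{0}\times S^{1})$, again by K\"unneth.

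The main obstacle I expect is the geometric identification $\Omega_{rot}\cong (\Omega\times S^{1})/G$: tilings with nontrivial rotational stabilizer (such as Penrose sun/star configurations) make the $G$-action on $\Omega$ non-free, so one has to be careful that the quotient is taken in the topological (not principal-bundle) sense and that the averaging argument genuinely does not require freeness. A secondary technical point is arranging $G$-equivariant approximants and equivariant bonding maps, which is straightforward for substitutions that commute with $G$ (as in the Penrose case) but needs a remark in general; once in place, the rest reduces to classical transfer and K\"unneth.
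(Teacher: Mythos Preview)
The paper does not actually prove this theorem; it is stated with citations to \cite{ORS, BDHS} and followed only by interpretive commentary. Your proposal is essentially the standard argument from those references and is correct in outline.

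One refinement worth making: your worry in the final paragraph about non-freeness is misplaced for the second assertion. Although $G$ may not act freely on $\Omega$, the diagonal action of $G$ on $\Omega\times S^{1}$ \emph{is} free, because $G$ acts freely on the $S^{1}$ factor (translation by $2\pi/|G|$ on a circle has no fixed points). Thus $\Omega\times S^{1}\to\Omega_{rot}$ is an honest $|G|$-fold covering, and the transfer argument there is on its firmest footing --- you do not need to appeal to the more delicate non-free version at all for that step. The non-freeness is only an issue for the first assertion ($\Omega\to\Omega_{0}=\Omega/G$), and there the transfer isomorphism $\check H^{*}(X/G;\R)\cong\check H^{*}(X;\R)^{G}$ is still valid for finite $G$ acting on compact Hausdorff spaces (or, as you suggest, at the level of CW approximants with a cellular $G$-action), provided $|G|$ is invertible in the coefficients.

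This dovetails with the paper's remark immediately following the theorem: $\Omega_{rot}$ is a Seifert-type circle bundle over $\Omega_{0}$ whose exceptional fibers sit over the images of the $G$-fixed tilings in $\Omega$. Those exceptional fibers are exactly what can produce torsion in $\check H^{*}(\Omega_{rot};\Z)$, which your real-coefficient transfer argument correctly (and necessarily) washes out.
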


The upshot of this theorem is that $\Omega$ is the space with the
richest cohomology, while $\Omega_{rot}$ and $\Omega_0$ have less
cohomological structure. This is because all rotations on
$\Omega_{rot}$ are homotopic to the trivial rotation, and so act
trivially on $\check H^*(\Omega_{rot})$. Thus, only the rotationally
invariant parts of the cohomology of $\Omega$ can manifest themselves
in the cohomology of $\Omega_{rot}$.

In general, $\Omega_{rot}$ is not homeomorphic to $\Omega_0 \times
S^1$, since the action of $S^1$ on $\Omega_{rot}$ is typically not
free. There are some tilings in $\Omega_{rot}$ that have discrete
$k$-fold rotational symmetry. For these tilings, rotation by $2 \pi/k$
brings us back to the same tiling. $\Omega_{rot}$ has the structure of
a circle bundle over $\Omega_0$ with some exceptional fibers
corresponding to these symmetric tilings.  (Seifert fibered
3-manifolds have a very similar structure.) When working with integer
coefficients, these exceptional fibers can give rise to torsion in
$\check H^2(\Omega_{rot})$.\cite{BDHS}

These relations can also be used to compute the cohomology of the
pinwheel tiling space. When there are tiles that point in all
directions, the only two well-defined spaces are $\Omega_{rot}$ and
$\Omega_0$. For the pinwheel, the cohomology of $\Omega_0$ can be
computed with Barge-Diamond collaring, with the result
that $\check H^0(\Omega_0)=\Z$, $\check H^1(\Omega_0)= \Z$ and $\check
H^2(\Omega_0)= \Z[1/5] \oplus \Z[1/3] \oplus \Z^5 \oplus \Z_2$. This
then determines the real cohomology of $\Omega_0$, and, by Theorem
\ref{XxS1}, the real cohomology of $\Omega_{rot}$. To compute the
integer cohomology of $\Omega_{rot}$, we have to consider the
exceptional fibers. There are 6 pinwheel tilings with 2-fold
rotational symmetry, as shown in Figure \ref{pin-sym2} below; 
these give rise to a $\Z_2^5$ term in a spectral
sequence
$$
\hskip -2in
\begin{picture}(130, 120) 
\setlength{\unitlength}{2pt}
\put(0,0){\vector(0,3){55}} 
\put(0,0){\vector(3,0){130}} 
\put(20,0){\line(0,3){40}} 
\put(45,0){\line(0,3){40}} 
\put(120,0){\line(0,3){40}} 
\put(0,20){\line(3,0){120}} 
\put(0,40){\line(3,0){120}} 
\put(8,28){$\Z$}
\put(23,28){$\Z \oplus \Z_2^5$}
\put(48,28){$\Z[1/5]\oplus\Z[1/3]^2\oplus \Z^5 \oplus \Z_2$}
\put(8,8){$\Z$}
\put(23,8){$\Z$}
\put(48,8){$\Z[1/5]\oplus\Z[1/3]^2\oplus \Z^5 \oplus \Z_2$}
\end{picture}
$$
that computes the cohomology of $\Omega_{rot}$. Furthermore, the $d_2$
map in the spectral sequence involves the torsion elements in a
non-trivial way.  The end result is that $\check H^1(\Omega_{rot})=
\Z^2$, $\check H^2(\Omega_{rot})= \Z[1/5] \oplus \Z[1/3]^2 \oplus \Z^6
\oplus \Z_2^5$ and $\check H^3(\Omega_{rot})= \Z[1/5] \oplus \Z[1/3]^2
\oplus \Z^5 \oplus \Z_2$. For details of this calculation, see
\cite{BDHS} or \cite{book}.

\section{What is cohomology good for?}

\subsection{Distinguishing spaces} The most obvious use of topological
invariants such as \v Cech cohomology is to distinguish spaces. If
tiling spaces $\Omega$ and $\Omega'$ have cohomologies that are not
isomorphic (as rings), then $\Omega$ and $\Omega'$ cannot be
homeomorphic. If a group $G$ of isometries of $\R^d$ (such as
$\Z_{10}$ or $\Z_4$) acts on $\Omega$ and $\Omega'$, then we can
decompose each cohomology group into representations of $G$.  For each
irreducible representation $\rho$ of $G$, let $\check
H^k_\rho(\Omega)$ be the part of $\check H^k(\Omega)$ that transforms
under $\rho$.  If $\Omega$ and $\Omega'$ are homeomorphic via a map
that intertwines the action of $G$, then for each representation
$\rho$ we must have $\check H^k_\rho(\Omega) = \check
H^k_\rho(\Omega')$.  In particular, if a tiling space $\Omega'$ is
related to the Penrose tiling $\Omega$ by a $\Z_{10}$-equivariant
homeomorphism, then not only must $\check H^1(\Omega',\R)$ equal
$\R^5$, but $\check H^1(\Omega',\R)$ must consist of a 1-dimensional
rotationally invariant piece and a 4-dimensional piece that rotates
like a vector.

For each subgroup $H <G$, we can also consider the topology of the set
$\Omega_{H}$ of tilings in $\Omega$ that are fixed by $H$.  If
$\Omega$ and $\Omega'$ are tiling spaces with the same rotation group
$G$, and if there exists an isomorphism that commutes with the action
of $G$, then $\Omega_{H}$ and $\Omega'_{H}$ must be homeomorphic. In
this sense, the structure of $\Omega_{H}$ is a topological invariant
of the tiling space $\Omega$.  If $H_1 < H_2$, then $\Omega_{H_2}
\subset \Omega_{H_1}$. The way that these different spaces nest within
one another is also manifestly invariant.

If $d=2$ and $G$ is a subgroup of $SO(2)$, then $\Omega_{H}$ is not especially
interesting. If $N$ is the normalizer of $H$ in $G$, then $N$ acts on
$\Omega_{H}$, and there are typically only finitely many orbits. Understanding
$\Omega_{H}$ boils down to counting these orbits and identifying how much 
symmetry a point in each orbit has. 

Things get more interesting if $d>2$, or if $H$ involves reflections. In that
case, there may be a subspace $V \subset \R^d$ whose vectors are fixed by 
the action of $H$. $\Omega_{H}$ is invariant under translation by elements
of $V$, and can often be realized as a space of tilings of $V$, or 
as a disjoint union of several such lower-dimensional tiling spaces. In such
cases, the \Cech cohomology of $\Omega_{H}$ yields an interesting invariant. 
 
We present two worked examples. We first
consider the chair tiling of the plane, with $G=D_8=O(2,\Z)$, the
group generated by rotation by 90 degrees and by reflection about the
$x$ axis. We compute the structure of $\Omega_{H}$ for every nontrivial
subgroup $H<G$. We then consider the pinwheel tilings,
with $G=O(2)$. 

\subsubsection{The chair tiling}

We work with the ``arrow'' version of the chair tiling. This is a
2-dimensional substitution tiling in which the tiles are all unit 
squares that meet
full-edge to full-edge. Each square is decorated with an arrow
pointing northeast, southeast, northwest or southwest. Rotation and
reflection act naturally on arrows, so a counterclockwise rotation by
90 degrees would send a northeast arrow to a northwest, a northwest to
a southwest, a southwest to a southeast, and a southeast to a northeast. 
Likewise, reflection about the $x$ axis interchanges northeast and 
southeast arrows, and interchanges northwest and southwest arrows. 
The substitution on northeast arrows is
\begin{equation*}\neabox \longrightarrow 
\begin{matrix} \seabox\neabox \\ \neabox\nwabox \end{matrix}, 
\end{equation*}
and the substitution on all other arrow tiles is obtained by rotating
or reflecting this picture. 
 
There are nine nontrivial subgroups of $G=D_8$. These include $H_1=G$ itself,
the rotation groups $H_2=\Z_4$ and $H_3=\Z_2$, the dihedral group 
$H_4=D_4$ generated by
reflections about the $x$ and $y$ axes, and the 2-element groups $H_5$ 
generated by reflection
about the $x$ axis, $H_6$ generated by reflection about the 
$y$ axis, $H_7$ generated by reflection about the line $y=x$, and 
$H_8$ generated by reflection about the line $y=-x$.  Finally, there is the
dihedral group $H_9$ generated by $H_7$ and $H_8$. 

There is only one tiling that is invariant under all of $G$, namely the
fixed point of the substitution whose central patch involves four arrows
pointing out from the origin: 
$ \begin{matrix} \nwabox\neabox \\ \swabox \seabox \end{matrix}$.
This is also the only tiling that is invariant 
under $H_2$ or $H_3$ or $H_4$ or $H_9$.  
$H_5$ and $H_6$ are conjugate, so 
$\Omega_{H_5}$ and $\Omega_{H_6}$ are homeomorphic, with rotation by 90 degrees
taking each set to the other.  Likewise, $\Omega_{H_7}$ and $\Omega_{H_8}$
are homeomorphic. We therefore restrict our attention to $\Omega_{H_5}$ and 
$\Omega_{H_7}$. 

We begin with $\Omega_{H_5}$. 
Since all vertices with incoming and outgoing arrows have either 3 or
0 incoming arrows, and since vertices with 3 incoming arrows cannot be
symmetric under $H_5$, the tiles along the $x$ axis must alternate
between the form $\begin{matrix}\neabox \\ \seabox \end{matrix}$ and 
$\begin{matrix}\nwabox \\   \swabox \end{matrix}$. Although the pattern
along the $x$ axis is periodic, there is a hierachy from the way that
tiles along the $x$ axis join with tiles once removed from the $x$ axis to form
clusters of four tiles, which join tiles even farther away to form clusters
of 16, and so on. $\Omega_{H_5}$ is connected and is 
homeomorphic to the dyadic solenoid
$Sol_2$, so $\check H^1(\Omega_{H_5})=\Z[1/2]$. 

\begin{figure}
\includegraphics[width=2.2truein]{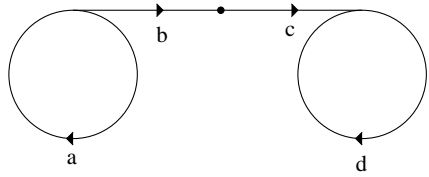}
\caption{The approximant for $\Omega_{H_7}$}
\label{H7-fig}
\end{figure}

To understand $\Omega_{H_7}$, we look at symmetric configurations of
tiles along the line $y=x$. All vertices take one of three forms:
$$ \begin{matrix} \seabox \swabox \\ \swabox \nwabox \end{matrix},\quad
\begin{matrix} \nwabox  \neabox \\ \swabox  \seabox \end{matrix}, \quad
\hbox{ and }
\begin{matrix} \seabox  \neabox \\ \neabox  \nwabox \end{matrix},$$
and the second of these patterns can occur at most once. In other words,
either all of the arrows along the line $x=y$ point northeast, or all point
southwest, or all point outwards from a special point where four 
infinite-order supertiles meet.

Reading from southwest to northeast along the line $x=y$, there
are four kinds of collared tiles that appear, which we label $a$, 
$b$, $c$ and $d$. The label
$a$ means (SW)SW(SW), while $b$ means (SW)SW(NE), $c$ means (SW)NE(NE) and 
$d$ means (NE)NE(NE). An $a$ can be followed by an $a$ or a $b$, a
$b$ is always followed by a $c$, a $c$ is always followed by a $c$, 
and a $d$ is always followed by a $d$. The Anderson-Putnam complex is then
given by the 
``eyeglasses'' graph shown
in Figure \ref{H7-fig}. 

Substitution sends edge $a$ to $aa$, edge $b$ to $ab$,
edge $c$ to $cd$ and edge $d$ to $dd$. 
The graph 
has $H^0=\Z$ and $H^1=\Z^2$, and substitution acts trivially on $H^0$ and 
by multiplication by 2 on $H^1$, so 
$\check H^0(\Omega_{H_7})=\Z$
and $\check H^1(\Omega_{H_7}) = \Z[1/2]^2$. 

One can apply a similar analysis to chair tilings in higher dimensions. 
For the 3-dimensional chair tiling, 
the relevant group is the 24-element group $G$ of symmetries of the cube, and
there are significant subgroups of order 2, 3, 4, 6, and 12. Each cyclic
subgroup $H$ gives rise to a space $\Omega_H$ with non-trivial 
$\check H^1$, while
the non-Abelian subgroups $H < G$ have $\Omega_H$ finite. 
For each non-Abelian 
$H$, the only invariant is $\check H^0(\Omega_H) = \Z^{|\Omega_H|}$.

\subsubsection{The pinwheel tilings}

The pinwheel tilings are based on a single tile, up to reflection, rotation
and translation.  It is a $1$-$2$-$\sqrt{5}$ right triangle with substitution
rule shown in Figure \ref{pin-sub}. 

\begin{figure}
\includegraphics[width=2.2truein]{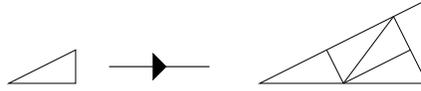}
\caption{The pinwheel substitution}
\label{pin-sub}
\end{figure}

The maximal symmetry group for any pinwheel tiling is $H_1=D_4$ 
(say, invariance under reflection about both
the $x$ and $y$ axes). There are four such tilings, all closely related. Each
is a fixed point of the square of the pinwheel substitution, with central
patches shown in Figure \ref{pin-sym1}.

\begin{figure}
\includegraphics[width=2.2truein]{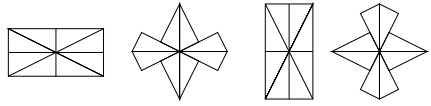}
\caption{Central patches of pinwheel tilings with dihedral symmetry}
\label{pin-sym1}
\end{figure}

There are two subgroups (up to conjugacy) of $H_1$, 
namely $H_2$ generated by 180 degree
rotation, and $H_3$ generated by reflection about the $x$ axis. 
There are six $H_2$-invariant tilings (plus rotations of the same), whose
central patches are shown in Figure \ref{pin-sym2}.
All are periodic points of the
substitution, the first four of period four and the last two of period
two. 
In other words, $\Omega_{H_2}$ consists of six disjoint circles, so
$\check H^1(\Omega_{\Z_2})=\check H^0(\Omega_{\Z_2})=\Z^6.$
These six circles are the same exceptional fibers in the fibration
$\Omega_{rot} \to \Omega_0$ that gave rise to torsion 
in $\check H^k(\Omega_{rot})$.

\begin{figure}
\includegraphics[width=2.2truein]{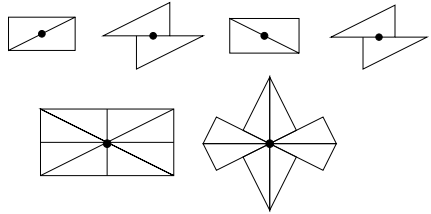}
\caption{Central patches of pinwheel tilings with rotational symmetry}
\label{pin-sym2}
\end{figure}

Finally, we consider tilings that are $H_3$-invariant. 
Since no tiles are themselves reflection-symmetric, there
must be edges along the $x$ axis, and these edges are either all 
hypotenuses or all of integer length. 

For the symmetric tilings with hypotenuses along the $x$ axis, we get a 
1-dimensional
tiling space that comes from the substitution $a \to aabba$, $b \to baabb$,
where $a$ and $b$ represent hypotenuses pointing in the two obvious
directions. This edge substitution actually comes from the square of the
pinwheel substitution, since the pinwheel substitution swaps hypotenuses
and integer legs.
A 1-dimensional Barge-Diamond calculation shows that this set of tilings 
has $\check H^1=\Z^2 \oplus \Z[1/5]$. Pinwheel substitution, 
applied only once, swaps this
space with the space of symmetric tilings involving integer edges
along the x axis, which therefore has the same cohomology.  The upshot
is that $H^1(\Omega_{H_3})=\Z^4 \oplus \Z[1/5]^2$.

\subsubsection{Asymptotic structures} 
A key difference between solenoids and spaces of 1-dimensional non-periodic 
tilings is that tilings may be forward or backwards asymptotic. 
Suppose that $\ttt_1$ and $\ttt_2$ are tilings in the same tiling
space $\Omega$, but that the restrictions of $\ttt_1$ and $\ttt_2$ to
the half-line $[0,\infty)$ are identical. Then $\ttt_1-t$ and $\ttt_2-t$
agree on a larger half-line $[-t,\infty)$, and $\lim_{t\to\infty}
d(\ttt_1-t, \ttt_2-t)=0$, where $d$ is the metric on $\Omega$. We say
that $\ttt_1$ and $\ttt_2$ are {\em forward asymptotic}. Likewise, 
two tilings can be {\em backwards asymptotic}. The orbits of $\ttt_1$ and 
$\ttt_2$ are called {\em asymptotic composants}. Every substitution tiling
space has a finite number of asymptotic composants, and the structure
of these composants is reflected in the cohomology of $\Omega$. 

For instance, in the Thue-Morse tiling space there are four periodic 
points of the substitution of the form $\ttt_1 = \ldots a.a \ldots$,
$\ttt_2 = \ldots a.b \ldots$, $\ttt_3 = \ldots b.a \ldots$ and 
$\ttt_4 = \ldots b.b \ldots$, where the central dot indicates the
location of the origin. The tilings $\ttt_1$ and $\ttt_2$ are backwards 
asymptotic, as are $\ttt_3$ and $\ttt_4$. Likewise, $\ttt_1$ and $\ttt_3$ are 
forward asymptotic, as are $\ttt_2$ and $\ttt_4$. If we imagine 
asymptotic composants to be ``joined at infinity'', then the orbits
of these four tilings form an {\em asymptotic cycle}. This asymptotic
cycle manifests itself as the closed loop on $\Gamma_{BD}$ that generated
a $\Z$ term in $\check H^1(\Omega)$. 

By studying asymptotic structures,
Barge and Diamond \cite{complete} were able to construct a complete 
homeomorphism invariant
of one-dimensional substitution tilings. Unfortunately, this invariant
is extremely difficult to compute in practice. As a practical alternative,
Barge and Smith \cite{BS} constructed an {\em augmented cohomology} of
one-dimensional substitution tilings. The precise definition involves 
the inverse limit of a variant of the Anderson-Putnam complex, but the
basic idea is to identify all forward asymptotic tilings that are periodic
points of the substitution, and separately to identify all backwards
asymptotic periodic points. The cohomology of the resulting space, while 
not a complete invariant, yields finer information than the ordinary
\v Cech cohomology. 

In higher dimensions, asymptotic structures are more subtle, since
there are (potentially) infinitely many directions to check. In 2 dimensions
(with results that generalize somewhat to still higher dimensions), Barge and
Olimb \cite{BO} examined the {\em periodic branch locus} of a substitution,
namely the set of pairs of tilings, each periodic under the substitution,
that agree on at least a half-plane. From this locus, and from translates
of these pairs along certain special directions,
they construct a larger {\em branch locus} that can have a structure similar to
that of a one-dimensional tiling space. The cohomology of the branch locus
is a homeomorphism invariant of a tiling space. 

With the chair tiling, as with a number of other examples, the branch
locus seems to be closely related to the tilings that are symmetric
under certain reflections, and the calculation of the cohomology of
the branch locus resembles the computations of $\check
H^1(\Omega_{H_5})$ and $\check H^1(\Omega_{H_7})$.  These in turn are
related to the quotient cohomology of the chair tiling space relative
to the 2-dimensional dyadic solenoid.  While it might be a
coincidence, all three computations seem to be telling the same
story!  Unfortunately, the general relation between cohomology of
branch loci, cohomology of tilings with symmetry and quotient
cohomology is not yet understood.

\subsection{Gap labeling}

For tilings of $\R^d$ with finite local complexity (with respect to
translations), there is a natural trace map from the highest
cohomology $\check H^d(\Omega)$ to $\R$. Each class $\alpha \in \check
H^d(\Omega)$ can be represented by a pattern-equivariant $d$-cochain
$i_\alpha$. Pick any bounded region $R$ of a tiling $\ttt$, let
$i_\alpha(R)$ be the sum of the values of $i_\alpha$ on all of the
tiles in $R$. If $i_\alpha$ and $i'_\alpha$ are cohomologous, then
$i_\alpha - i'_\alpha = \delta i_\beta$ for some pattern-equivariant
cochain $i_\beta$, and $i_\alpha(R) - i'_\alpha(R) = i_\beta(\partial
R)$. Define
$$Tr(\alpha) = \lim_{r \to \infty} \frac{i_\alpha(B_r)}{Vol(B_r)},$$
where $B_r$ is the ball of radius $r$ around the origin in a fixed tiling
$T$. Since $Vol(\partial B_r)/Vol(B_r) \to 0$ as $r \to \infty$, different
representatives for the class $\alpha$ yield the same limit. Likewise, 
if $\Omega$ is uniquely ergodic, then all tilings $\ttt$ yield the same limit. 

For instance, in a Fibonacci tiling where the $a$ tiles have length
$\phi=(1+\sqrt{5})/2$ and the $b$ tiles have length $1$, there are on
average $\phi$ $a$ tiles for every $b$ tile, so the indicator cochain
$i_a$ has trace $\phi/(\phi^2+1)$ and the cochain $i_b$ has trace
$1/(\phi^2+1)$. Since $i_a$ and $i_b$ generate $\check H^1$, the image
of the trace map is $(\phi^2+1)^{-1}\Z[\phi]$.

The image of the trace map is called the {\em frequency module} of the
tiling space. The frequency module is isomorphic to the {\em
  gap-labeling group}, which in K-theory is the image of a trace map
in $K^0$.  Besides being an invariant of topological conjugacies, the
gap-labeling group is used (as the name implies) to label gaps in the
spectra of Schr\"odinger operators associated with a tiling. The key
theorem is due to Bellissard (\cite{B}, see also \cite{BBG, BHZ,
  BKL}):

\begin{thm} Let $\ttt$ be a tiling in a minimal and uniquely ergodic tiling 
space $X$, and let $V: \R^d \to \R$ be a strongly pattern-equivariant function.
Consider the Schr\"odinger operator
$$ H = -\frac{\hbar^2}{2m}\Delta + V. $$
Let $E_0$ be a point that is not in the spectrum of $H$. (That is, $E_0$ lies
in a gap in the spectrum.) Then the integrated density of states up to 
energy $E_0$ is an element of the gap-labeling group of $\Omega$.
\end{thm}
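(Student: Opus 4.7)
The plan is to route the problem through the noncommutative geometry of the tiling and identify the integrated density of states (IDS) at $E_0$ with the value of a trace on a projection in the tiling $C^*$-algebra. First I would form the continuous hull $\Omega$ of $\ttt$ and build the associated transversal groupoid, whose reduced $C^*$-algebra $\aaa$ is Morita equivalent to the crossed product $C(\Omega)\rtimes \R^d$. Because $V$ is strongly pattern-equivariant, it descends to a continuous function on $\Omega$, and the covariant family $\{H_{\ttt'}\}_{\ttt'\in\Omega}$ gives a self-adjoint operator affiliated with $\aaa$ in the standard sense.

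Next I would exploit the fact that $E_0$ lies in a spectral gap of $H$. Choose a continuous compactly supported $f:\R\to\R$ which equals $1$ on the part of $\operatorname{spec}(H)$ below $E_0$ and $0$ on the part above. Continuous functional calculus then gives the spectral projection $P_{<E_0}=\chi_{(-\infty,E_0)}(H)=f(H)\in\aaa$, so it defines a class $[P_{<E_0}]\in K_0(\aaa)$. Minimality and unique ergodicity of $\Omega$, with invariant measure $\mu$, supply a unique normalized trace $\tau$ on $\aaa$, and Shubin's formula, proved by a Birkhoff-type argument averaging $\operatorname{tr}\chi_{B_r}f(H)\chi_{B_r}$ over balls of growing radius, yields
\[
\mathrm{IDS}(E_0)\;=\;\tau(P_{<E_0}).
\]
Hence $\mathrm{IDS}(E_0)$ lies in the image $\tau_\ast(K_0(\aaa))\subset\R$ of the trace on K-theory.

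The final and hardest step will be to identify $\tau_\ast(K_0(\aaa))$ with the frequency module, i.e.\ with the image of the cohomological trace $Tr:\check H^d(\Omega)\to\R$ defined just above the theorem. The plan here is to compute $K_0(\aaa)$ using the same inverse-limit description of $\Omega$ that drives the PV construction of Subsection \ref{secPV} (so that cycles in $K_0$ are represented by idempotents built out of patches), and then to apply the Chern--Connes character: the pairing of $\tau$ against a K-theory class is computed by integrating an integer-valued pattern-equivariant $d$-cochain against the invariant measure, which is exactly the definition of $Tr$. The main obstacle is upgrading this pairing, which is easy rationally, to an integral statement that $\tau_\ast(K_0(\aaa))$ coincides with $Tr(\check H^d(\Omega,\Z))$; for $d\ge 2$ this step requires Connes' measured index theorem for foliations (or the equivalent direct K-theoretic computation due to Bellissard--Benameur--Oyono-Oyono and Kaminker--Putnam). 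Once this identification is in hand, $\tau(P_{<E_0})$ lies in the gap-labeling group, which proves the theorem.
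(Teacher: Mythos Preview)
The paper does not prove this theorem; it attributes it to Bellissard \cite{B} and merely cites \cite{BBG, BHZ, BKL} for further developments. There is no in-paper argument to compare against.

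Your outline is essentially the standard route in the literature and is sound as a plan. Two remarks are worth making. First, your final step proves more than the theorem as stated requires. In the paragraph immediately preceding the theorem, the paper \emph{defines} the gap-labeling group as the image of the trace map on $K_0$, and separately asserts (without proof) that this coincides with the frequency module, i.e.\ the image of the cohomological trace $Tr$ on $\check H^d(\Omega)$. So once you have $[P_{<E_0}]\in K_0(\aaa)$ and Shubin's formula $\mathrm{IDS}(E_0)=\tau(P_{<E_0})$, you are already done: $\mathrm{IDS}(E_0)\in\tau_*(K_0(\aaa))$ is by definition the gap-labeling group. The identification of $\tau_*(K_0(\aaa))$ with $Tr(\check H^d(\Omega,\Z))$ is the deeper ``gap-labeling conjecture'' (now a theorem, via the index-theoretic and K-theoretic work you cite), but it is a separate statement and not needed here.

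Second, one technical point in your step producing $P_{<E_0}\in\aaa$: the Schr\"odinger operator $H$ is unbounded and hence only \emph{affiliated} with $\aaa$, so continuous functional calculus does not apply directly inside the algebra. One must argue via the resolvent (which does lie in $\aaa$) or otherwise verify that $f(H)$ lands in $\aaa$ for compactly supported continuous $f$. This is standard in the Bellissard framework but should be flagged rather than slid past.
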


Elements of the frequency module (or gap-labeling group) 
should not be viewed as pure numbers. Rather,
they have units of (Volume)${}^{-1}$, being the ratio of $i_\alpha(B_r)$ (a pure 
number) and $Vol(B_r)$. Likewise, the integrated density of states gives the
number of eigenstates of $H$ up to energy $E_0$ (a pure number) per unit
volume. 

Traces of cohomologies in all dimensions were studied in \cite{KP}, and 
are known as {\em Ruelle-Sullivan maps}. These traces give a ring homomorphism
from $\check H^*(\Omega)$ to the exterior algebra of $\R^d$.
 
\subsection{Tiling deformations}

Some properties of a tiling are consequences of the geometry of the tiles,
while others follow from the combinatorics of how tiles fit together. To
distinguish between the two, we consider different tiling spaces that have
the same combinatorics, and parametrize the possible tile shapes.

Let $X$ be a tiling space. To specify the shapes of the tiles involved, we
must indicate the displacement associated to every edge of every possible tile.
Furthermore, if two tiles share an edge, then those two edges must be described
by the same vector, and the vectors for all the edges around a tile must sum
to zero. 

In other words, the shapes of all the tiles is described by a co-closed 
vector-valued  1-cochain on a space obtained by taking one copy of each 
tile type and identifying edges that can meet. That is, a cochain on 
the Anderson-Putnam complex $\Gamma_{AP}$! Different geometric 
versions of the same combinatorial tiling space are described by different
shape covectors on the same Anderson-Putnam complex. 

\begin{thm}\cite{CS} Let $\Omega$ be a tiling space with shape 
cochain $\alpha_0$.
There is a neighborhood $U$ of $\alpha_0$ in $C^1(\Gamma_{AP}, \R^d)$ such that,
for any two co-closed shape cochains $\alpha_{1,2} \in U$, the tiling spaces 
$\Omega_{\alpha_1}$ and $\Omega_{\alpha_2}$ obtained from $\alpha_1$ and 
$\alpha_2$ are mutually locally derivable (MLD) if and only if $\alpha_1$ 
and $\alpha_2$ are cohomologous. 
\end{thm}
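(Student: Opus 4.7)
The plan is to prove the two directions separately, using the interpretation of a co-closed shape cochain $\alpha \in C^1(\Gamma_{AP},\R^d)$ as a rule that assigns a displacement vector to every oriented edge of every tile type, with these displacements summing to zero around each tile (co-closedness) and agreeing whenever two edges are identified in $\Gamma_{AP}$.

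For the easy direction (cohomologous implies MLD), suppose $\alpha_2 - \alpha_1 = \delta\gamma$ for some $\gamma \in C^0(\Gamma_{AP},\R^d)$, which amounts to a $\R^d$-valued function on vertex types of the complex. I would define a map $\Phi: \Omega_{\alpha_1} \to \Omega_{\alpha_2}$ by translating each vertex $v$ of a tiling $\ttt \in \Omega_{\alpha_1}$ by the vector $\gamma(\bar v)$, where $\bar v$ is its type in $\Gamma_{AP}$. The edge from $v$ to $w$ then changes by $\gamma(\bar w)-\gamma(\bar v) = (\delta\gamma)(\overline{vw}) = \alpha_2(\overline{vw})-\alpha_1(\overline{vw})$, so $\Phi(\ttt)$ has edge vectors matching $\alpha_2$. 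Because the displacement of each vertex depends only on its combinatorial type, $\Phi$ is a local rule and yields an MLD equivalence.

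For the harder direction (MLD implies cohomologous), assume $\Omega_{\alpha_1}$ and $\Omega_{\alpha_2}$ are MLD via a map $\Psi$, and that both $\alpha_1,\alpha_2$ lie in a sufficiently small neighborhood $U$ of $\alpha_0$. Since $\Psi$ is locally computable from finite-radius patches, and since the geometries $\alpha_1,\alpha_2$ agree combinatorially (only edge vectors differ, and only by small amounts), the MLD rule cannot introduce, destroy, or re-glue any tiles; it can only perturb vertex positions slightly. I would then argue that $\Psi$ must act by rigidly translating each vertex $v$ by an amount depending only on the local combinatorial type of $v$, which after passing if necessary to a sufficiently collared version of $\Gamma_{AP}$ is encoded by $\bar v$. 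Setting $\gamma(\bar v)$ to be this displacement produces a well-defined 0-cochain, and the edge equation above read backwards gives $\alpha_2 - \alpha_1 = \delta\gamma$.

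The main obstacle is justifying the rigidity claim in the second direction: that within $U$, the MLD map has no option but to act as a vertex-type-dependent translation. The point is that larger perturbations could in principle support MLD maps that cut tiles, regroup fragments, or realign pieces in ways that could not be captured by a coboundary on $\Gamma_{AP}$; one must show that for $\alpha_1,\alpha_2$ close enough to $\alpha_0$, no such drastic reorganization is possible, so the only freedom remaining is small vertex motion determined locally. This is precisely why the theorem is local in $C^1(\Gamma_{AP},\R^d)$ rather than global, and is the step that requires the most care in making the informal ``locality forces a vertex displacement'' argument precise.
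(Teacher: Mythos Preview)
The paper does not supply its own proof of this theorem; it is quoted with a citation to \cite{CS} and no argument is given in the text. So there is nothing here to compare your proposal against directly.

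Evaluating your sketch on its own merits: the easy direction (cohomologous $\Rightarrow$ MLD) is correct and is the standard argument --- a $0$-cochain $\gamma$ on $\Gamma_{AP}$ gives a vertex-type-dependent shift, and one checks that this is a local, translation-equivariant rule taking $\alpha_1$-shaped tilings to $\alpha_2$-shaped ones.

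For the hard direction your outline has the right shape, and you correctly isolate the crux (that an MLD map between sufficiently nearby shape spaces cannot cut or regroup tiles and so must reduce to a pattern-equivariant vertex shift). One point deserves more care than you give it: you allow yourself to pass ``if necessary to a sufficiently collared version of $\Gamma_{AP}$'' in order to define $\gamma$. But then $\delta\gamma=\alpha_2-\alpha_1$ is an equation on the collared complex, and the theorem as stated asks for $\alpha_1,\alpha_2$ to be cohomologous in $C^1(\Gamma_{AP},\R^d)$ itself. Indeed, the paper immediately afterwards obtains the $\check H^1(\Omega,\R^d)$ statement precisely \emph{by} passing to collared complexes and taking a limit, so smuggling collaring into the proof of the $\Gamma_{AP}$-level theorem blurs the distinction between the two results. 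You would either need to show that the vertex displacement actually depends only on the uncollared vertex type (using smallness of the deformation), or else acknowledge that your argument really proves the $\check H^1$ version directly.
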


This theorem says that 
the first cohomology of $\Gamma_{AP}$, with values in $\R^d$,
parametrizes local deformations of the shapes and sizes of the tiles, up
to local equivalence. By considering changes in the shapes and sizes of
collared tiles and taking a limit of repeated collaring (as in G\"ahler's
construction), we obtain

\begin{thm}\cite{CS} Infinitesimal shape deformations of tiling spaces, 
modulo local equivalence, are parametrized by the vector-valued
cohomology $\check H^1(\Omega,\R^d)$. 
\end{thm}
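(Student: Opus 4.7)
The plan is to bootstrap the preceding theorem through iterated collaring and then pass to a direct limit via G\"ahler's construction. The preceding result parametrizes only those deformations that treat all combinatorially-equivalent \emph{uncollared} tiles identically: it lives on $H^1(\Gamma_{AP},\R^d)$. A general infinitesimal shape deformation need not respect this rigidity, since two tiles of the same type in different local environments may be deformed differently, and we must account for this extra freedom.

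First, I would replace the tile set with its $n$-times collared version for each $n\ge 0$. The corresponding Anderson-Putnam complex is exactly the G\"ahler approximant $\Gamma_G^n$, and a shape deformation whose rule depends only on the tiling out to some radius $R$ is precisely a co-closed $\R^d$-valued 1-cochain on $\Gamma_G^n$ for any $n$ large enough that the collaring radius exceeds $R$. Applying the preceding theorem at each level $n$ identifies a neighborhood of the base shape cochain, modulo MLD, with $H^1(\Gamma_G^n,\R^d)$.

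Next, I would verify that the forgetful maps $q_n\colon \Gamma_G^n\to \Gamma_G^{n-1}$ from G\"ahler's construction induce pullbacks $q_n^*$ that are compatible with the MLD relation: a deformation expressible at level $n-1$ is automatically expressible at level $n$, and cohomologous representatives remain cohomologous after refinement. Thus the groups $H^1(\Gamma_G^n,\R^d)$ assemble into a directed system. Invoking G\"ahler's identification $\Omega=\ilim(\Gamma_G^n,q_n)$ together with the theorem that \v Cech cohomology commutes with inverse limits, we get $\dlim H^1(\Gamma_G^n,\R^d)=\check H^1(\Omega,\R^d)$, which would deliver the claim.

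The main obstacle, I expect, lies in the dictionary between the analytic notion of ``infinitesimal shape deformation modulo local equivalence'' and the combinatorial notion of ``1-cochain on some $\Gamma_G^n$ modulo cohomology.'' Both directions require a finite-radius argument of the same flavor that underlies the identification of integer-valued pattern-equivariant cohomology with \v Cech cohomology: every local shape rule has some finite interaction radius, and every MLD equivalence between two deformations is implemented by a map of finite radius, so each can be encoded at a sufficiently collared level. Once this lifting is in place, the preceding theorem applies level by level and the direct-limit identification is automatic.
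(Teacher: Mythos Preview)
Your proposal is correct and follows essentially the same route the paper indicates: the paper derives this theorem from the preceding one precisely ``by considering changes in the shapes and sizes of collared tiles and taking a limit of repeated collaring (as in G\"ahler's construction),'' which is exactly the bootstrap-through-$\Gamma_G^n$-and-pass-to-the-direct-limit argument you outline. Your version is more explicit about the finite-radius dictionary needed to match deformations and MLD equivalences to some collaring level, but the strategy is the same.
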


Among all shape changes, there are some that yield topological
conjugacies. We call such a shape change a {\em shape conjugacy\/}.
Shape conjugacies correspond to a subgroup $\check H^1_{an}(\Omega, \R^d)$ of 
$\check H^1(\Omega, \R^d)$ called the {\em 
asymptotically negligible\/} classes. These classes are neatly described
in terms of pattern-equivariant functions:
\begin{thm}\cite{Kel3}
A class in $\check H^1(\Omega,\R^d)$ is asymptotically negligible if and
only if it can be represented as a strongly pattern-equivariant
vector-valued 1-form that is the differential of a weakly 
pattern-equivariant vector-valued function. 
\end{thm}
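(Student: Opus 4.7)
The plan is to establish both directions by turning the cohomological data into an explicit deformation map of $\R^d$. A strongly pattern-equivariant vector-valued 1-form $\omega$ representing a class $\alpha \in \check H^1(\Omega,\R^d)$ prescribes a shape change by declaring the new displacement of each oriented edge $e$ from $y$ to $x$ to be $(x-y) + \int_e \omega$. The central observation is that if $\omega = d\phi$, then by the fundamental theorem of calculus the deformed position of any vertex $x$ (relative to some base point) is simply $x + \phi(x)$; the whole deformation is realized by the single map $F_\phi(x) = x + \phi(x)$.

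For sufficiency, assume $\omega$ is strongly PE and $\omega = d\phi$ with $\phi$ weakly PE. For small enough $\phi$ the map $F_\phi$ is a homeomorphism of $\R^d$, and the deformed tiling is literally $F_\phi(\ttt)$. Because $\phi$ is the uniform limit of strongly PE functions $\phi_n$ (of radii $R_n \to \infty$), the map $F_\phi$ is uniformly close to maps that respect the local patch structure to arbitrarily large radii. Consequently $F_\phi$ intertwines the orbit map $\R^d \to \Omega$ with the orbit map $\R^d \to \Omega_\alpha$ up to small translations, and extends to a topological conjugacy $\Omega \to \Omega_\alpha$. Thus $\alpha$ is asymptotically negligible.

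For necessity, suppose a conjugacy $h : \Omega \to \Omega_\alpha$ exists. A standard rigidity argument (using finite local complexity and minimality) shows that $h$ must be a bounded perturbation of the identity in the following sense: there is a continuous function $\psi: \Omega \to \R^d$, uniformly small, such that $h(\ttt)$ is combinatorially the same tiling as $\ttt$ but with every tile shifted by an amount determined by $\psi$ applied to translates of $\ttt$. Fixing a base tiling $\ttt_0$, set $\phi(x) := \psi(\ttt_0 - x)$. Continuity of $\psi$ on the tiling space translates exactly into weak pattern-equivariance of $\phi$: the value $\phi(x)$ depends, up to arbitrarily small errors, on the patch of $\ttt_0$ of arbitrarily large radius around $x$. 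Computing the deformed edge vectors shows $\omega := d\phi$ is precisely the shape-change 1-form for $\alpha$; and $\omega$ is strongly PE because the shape change is specified by a finite cochain on $\Gamma_{AP}$ (or a collared version thereof) via the theorem of Clark--Sadun quoted above.

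The main obstacle is the necessity direction, specifically producing the function $\psi$ (and hence $\phi$) and proving its weak pattern-equivariance. One must know that topological conjugacies between deformed tiling spaces are automatically given by small, orbit-continuous displacements rather than by wilder identifications; this is where the rigidity of finite local complexity tiling dynamics enters. Once $\psi$ is in hand, showing $d\phi$ is strongly PE is essentially bookkeeping, since the edge vectors live on the Anderson--Putnam complex, but the passage from continuous $\psi$ to uniform approximability of $\phi$ by strongly PE functions requires carefully relating the cylinder-set topology on $\Omega$ to the PE radius filtration on functions on $\R^d$.
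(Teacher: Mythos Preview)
The paper does not actually prove this theorem; it is stated with a citation to \cite{Kel3} and no argument is given in the text. So there is no ``paper's own proof'' to compare your attempt against.

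That said, your sketch is broadly along the right lines and matches the spirit of Kellendonk's argument in \cite{Kel3}. The key identification you use --- that a continuous function $\psi:\Omega\to\R^d$ pulled back along the orbit map $x\mapsto\ttt_0-x$ is exactly a weakly pattern-equivariant function on $\R^d$, and conversely --- is the central translation device, and your map $F_\phi(x)=x+\phi(x)$ is the correct concrete realization of the shape deformation. For the sufficiency direction your outline is essentially complete once one checks that $F_\phi$ really induces a continuous map on hulls; this follows because uniform closeness of $\phi$ to strongly PE functions $\phi_n$ means that tilings agreeing on large balls are sent to deformed tilings agreeing (up to small translations) on large balls.

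The genuine gap you yourself flag in the necessity direction deserves a sharper statement. You assert a ``standard rigidity argument'' producing $\psi$, but this is exactly the non-trivial content: one must show that any topological conjugacy $h$ between FLC tiling spaces is, after composing with an MLD map, given by a translation cocycle $\psi$. In the paper this is essentially the theorem from \cite{KS1} stated immediately after the one you are proving (that every conjugacy factors as MLD followed by shape conjugacy), so invoking it here would be circular in the paper's logical order. A self-contained argument has to go through the observation that $h$ preserves orbits and is uniformly continuous, hence $h(\ttt)$ lies in a bounded neighborhood of the $\R^d$-orbit of the combinatorially corresponding deformed tiling, from which $\psi$ can be extracted. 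Your sketch gestures at this but does not supply it.
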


Asymptotically negligible classes don't just describe shape 
conjugacies. They essentially describe all topological conjugacies, thanks
to 
\begin{thm}\cite{KS1}
If $f: \Omega_X \to \Omega_Y$ is a topological conjugacy of tiling 
spaces, then we can write $f$ as the composition $f_1 \circ f_2$ 
of two maps, such that $f_1$ is a shape conjugacy and $f_2$ is an 
MLD equivalence. 
\end{thm}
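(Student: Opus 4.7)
The plan is to factor $f$ through an intermediate tiling space $\Omega_Z$ whose combinatorial structure is inherited from $\Omega_X$ but whose geometry matches $\Omega_Y$, so that $f_2:\Omega_X\to\Omega_Z$ is MLD by construction and $f_1:\Omega_Z\to\Omega_Y$ differs from the identity only by a cohomologous change of shape cochains.

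First I would invoke the standard locality lemma for conjugacies between FLC tiling spaces: since $f$ commutes with the $\R^d$ action and is uniformly continuous, there exists a radius $R>0$ such that the patch of $f(\ttt)$ inside a bounded region around any point $x$ is determined by the patch of $\ttt$ in the ball $B_R(x)$. This is the familiar ``sliding block code'' property for tilings and lets one define a local rewriting rule $\ttt\mapsto \ttt_Z$ on $\Omega_X$: attach to every tile $t\subset \ttt$ a decoration recording the finite combinatorial data of $f(\ttt)$ within distance $R$ of $t$. Call the resulting space $\Omega_Z$. The map $f_2:\ttt\mapsto \ttt_Z$ is pattern-equivariant in each direction (the decoration is local in $\ttt$ by the locality lemma, and forgetting the decoration trivially recovers $\ttt$), so $f_2$ is an MLD equivalence.

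Next I would define $f_1:\Omega_Z\to\Omega_Y$ by $\ttt_Z\mapsto f(\ttt)$, so $f=f_1\circ f_2$ tautologically. The key point is that $f_1$ changes \emph{only shapes}: the decoration on each tile of $\ttt_Z$ specifies a bounded displacement by which that tile's vertices must be moved in $f(\ttt)$, and because the decoration is pattern-equivariant the collection of these displacements assembles into a vector-valued $1$-cochain on $\Gamma_{AP}(\Omega_Z)$ with values in $\R^d$. The combinatorial adjacency graph of $\ttt_Z$ and that of $f(\ttt)$ are in natural bijection, since both are in bijection (via the same local rule) with adjacencies in $\ttt$. Consequently $\Omega_Z$ and $\Omega_Y$ are two geometric realizations of the same combinatorial tiling space, and $f_1$ is a shape change between them.

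The principal obstacle is verifying that $f_1$ is not merely a shape change but a \emph{shape conjugacy}, i.e.\ that the shape cochain on $\Gamma_{AP}$ obtained from the decorations differs from the identity by an asymptotically negligible class in $\check H^1(\Omega_Z,\R^d)$. Since $f_1$ is a homeomorphism (both $f$ and $f_2$ are), the associated shape deformation must preserve the topological dynamics, and by the Kellendonk result on asymptotic negligibility this forces the deformation to be representable as the differential of a weakly pattern-equivariant $\R^d$-valued function. Establishing this last step amounts to constructing such a primitive from the displacements encoded in the decorations, using that the limit $\lim_{T\to\infty}\|f(\ttt)_{|B_T}-\ttt_Z{}_{|B_T}\|/T=0$ in the appropriate sense (because the displacements are uniformly bounded). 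Once that is in place, $f_1$ is a shape conjugacy and the decomposition $f=f_1\circ f_2$ is complete.
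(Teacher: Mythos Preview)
The paper does not actually prove this theorem; it merely cites \cite{KS1} and moves on. So there is no in-paper proof to compare against. That said, your proposal contains a genuine gap at the very first step.

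Your ``standard locality lemma'' is false as stated. Uniform continuity of $f$ in the tiling metric tells you that if $\ttt$ and $\ttt'$ agree on a large ball, then $f(\ttt)$ and $f(\ttt')$ are \emph{close}---meaning they agree on a (smaller) ball \emph{up to a small translation}. It does \emph{not} say that the patch of $f(\ttt)$ near $x$ is exactly determined by the patch of $\ttt$ near $x$. If it did, $f$ itself would already be a local derivation, hence MLD, and the theorem would be vacuous. The whole content of the result is that topological conjugacies need not be local; the Fibonacci shape changes mentioned just above the theorem give explicit examples of conjugacies that are not MLD.

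Because of this, your decoration scheme breaks down: the ``finite combinatorial data of $f(\ttt)$ within distance $R$ of $t$'' is not determined by a bounded patch of $\ttt$, so the decorated tiles do not form a finite alphabet and $f_2$ is not MLD. The actual argument in \cite{KS1} proceeds differently: one first shows that $f$ is uniformly approximated by local maps (for each $\epsilon$ there is a local map $g_\epsilon$ with $d(f(\ttt),g_\epsilon(\ttt))<\epsilon$ for all $\ttt$), then fixes one such $g=g_\epsilon$ with $\epsilon$ small enough that $g$ is itself a homeomorphism onto an intermediate space $\Omega_Z$. The map $f_2=g$ is MLD by construction, and $f_1=f\circ g^{-1}$ moves every tiling by a uniformly small amount. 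It is this uniform smallness---not any local determination---that makes $f_1$ a shape conjugacy: the displacement $\ttt\mapsto f_1(\ttt)-\ttt$ is a weakly pattern-equivariant $\R^d$-valued function whose differential is strongly PE, which by Kellendonk's criterion is exactly the asymptotically negligible condition. Your final paragraph gestures at this mechanism, but it cannot be reached from your setup because the intermediate space $\Omega_Z$ was never correctly constructed.
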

The importance of this theorem is that it allows us to check when a 
property of a tiling (e.g. having its vertices form a Meyer set) is 
invariant under topological conjugacies. One merely has to check whether
the property is preserved by MLD maps (a local computation) and whether
it is preserved by shape conjugacies. (The Meyer property turns out to
be preserved by MLD maps but not by shape conjugacies\cite{KS1}.)

For substitution tilings, the asymptotically negligible classes
are easy to characterize:
\begin{thm}\cite{CS} Let $\Omega$ be a substitution tiling space generated
from a substitution $\sigma$. Let $\sigma^*$ denote the action of $\sigma$
on the vector space $\check H^1(\Omega, \R^d)$. The asymptotically negligible
classes are the span of the generalized eigenvectors of $\sigma^*$ with
eigenvalues strictly inside the unit circle.
\end{thm}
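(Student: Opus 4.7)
The approach hinges on combining Kellendonk's characterization (asymptotic negligibility $\iff$ existence of a weakly PE primitive) with the finite-dimensional description $\check H^1(\Omega, \R^d) = \dlim\bigl(H^1(\Gamma_{AP}, \R^d), \sigma^*\bigr)$, and then tracking how integrals of PE $1$-forms over paths scale under iteration of $\sigma$. The plan is to use boundedness of weakly PE primitives as the pivotal bridge between the analytic notion of asymptotic negligibility and the spectral one.

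First I would record the key consequence of weak pattern equivariance: any weakly PE function $f\colon\R^d\to\R^d$ is a uniform limit of strongly PE functions, each of which takes only finitely many values by finite local complexity, so $f$ is necessarily bounded. Hence, if $[\alpha]$ is asymptotically negligible with primitive $f$, the line integral $\int_\gamma \alpha = f(\gamma(1)) - f(\gamma(0))$ is uniformly bounded over all paths in the tiling. This boundedness is the analytic constraint the spectral decomposition must match. Next, decompose $H^1(\Gamma_{AP}, \R^d) = V_- \oplus V_\circ \oplus V_+ \oplus V_0$ into generalized $\sigma^*$-eigenspaces with eigenvalue modulus $<1$, $=1$, $>1$, and $=0$ respectively; the summand $V_0$ dies in the direct limit and may be discarded.

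For the \emph{only if} direction, suppose $[\alpha]$ is asymptotically negligible. A $1$-cycle $\gamma$ in $\Gamma_{AP}$ generally lifts under $\sigma^n$ to a \emph{non-closed} path in the tiling, because the loops in $\Gamma_{AP}$ are artifacts of edge identifications that separate upon lifting. One then has
\begin{equation*}
\left| \int_{(\sigma^n)_*\gamma} \alpha \right| \;=\; \left| \int_\gamma (\sigma^*)^n \alpha \right|,
\end{equation*}
and for $\gamma$ pairing nontrivially with a hypothetical component of $\alpha$ in $V_\circ\oplus V_+$ the right-hand side is either bounded below (diagonal $|\mu|=1$), grows polynomially (Jordan block on the unit circle), or grows exponentially ($|\mu|>1$). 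Any of these contradicts the boundedness of $f$, forcing $[\alpha]\in V_-$.

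For the \emph{if} direction, given $\alpha\in V_-$, I would construct a weakly PE primitive by iteration. Choose a representative with $\sigma^*\alpha = \mu\alpha + d\phi_0$ on $\Gamma_{AP}$, so that $(\sigma^*)^n\alpha = \mu^n\alpha + d\Phi_n$ with $\Phi_n = \sum_{k=0}^{n-1}\mu^{n-1-k}(\sigma^*)^k\phi_0$. Lift to the tiling and build strongly PE approximations $f_n$ of radius growing like $\lambda^n R_0$ by integrating $\alpha$ along edges inside successive $n$-supertiles and using $d\Phi_n$ to reconcile primitives across supertile boundaries. The contraction $|\mu|<1$ (with the geometric factor dominating any polynomial Jordan-block factor) makes $\{f_n\}$ uniformly Cauchy, yielding a weakly PE $f$ with $df=\alpha$. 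The main obstacle will be verifying uniform convergence: $\sigma^*$ may have spectral radius greater than $|\mu|$ on $C^0(\Gamma_{AP}, \R^d)$, so $\Phi_n$ need not itself decay. The fix is to work modulo exact cochains, projecting everything onto the generalized $\mu$-eigenspace, so that only the part of $d\Phi_n$ relevant to matching primitives across supertile boundaries enters the estimate; that part inherits the full rate $|\mu|^n$. Once this estimate is secured, Kellendonk's theorem delivers asymptotic negligibility of $[\alpha]$.
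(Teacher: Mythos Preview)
The paper does not prove this theorem; it merely cites it from \cite{CS} and illustrates it with the Fibonacci and Penrose examples. So there is no ``paper's own proof'' to compare against. What I can do is assess your argument on its own terms, and flag one place where it does not go through.

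Your \emph{if} direction is essentially the right idea and matches the spirit of the original Clark--Sadun construction: iterate the substitution, use the contraction $|\mu|<1$ to get a Cauchy sequence of strongly PE approximate primitives, and pass to the limit. The bookkeeping you flag (that $\sigma^*$ on $0$-cochains may have large spectral radius, so one must work modulo exact pieces) is real but manageable.

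The \emph{only if} direction, however, has a genuine gap in the diagonalizable $|\mu|=1$ case. You argue that if $[\alpha]$ has a component with $|\mu|=1$ (no Jordan block), then $\bigl|\int_\gamma (\sigma^*)^n\alpha\bigr|$ is ``bounded below,'' and claim this contradicts boundedness of the weakly PE primitive $f$. It does not: these integrals are then bounded both above and below in modulus, which is perfectly consistent with $f$ being bounded. Boundedness of $f$ is a \emph{necessary} consequence of weak pattern equivariance, but it is not the full content of it, and it is precisely the $|\mu|=1$ case where boundedness alone is too weak a tool. To exclude unit-modulus eigenvalues you need more than a growth estimate on path integrals: you must use that $f$ is a \emph{uniform limit} of strongly PE functions $f_n$, and show that the presence of a unit-circle component forces the differences $f-f_n$ to stay bounded away from zero on patches of arbitrarily large radius (an equicontinuity/recurrence obstruction of Gottschalk--Hedlund type), so no strongly PE sequence can converge to $f$. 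Alternatively, one can bypass Kellendonk's characterization entirely and argue directly from the original \cite{CS} definition of asymptotic negligibility as a shape conjugacy, where the $|\mu|=1$ case is ruled out by a uniform-convergence requirement on the conjugating homeomorphism rather than by a mere boundedness bound.
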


For example, for the Fibonacci tiling we have $\check H^1(\Omega)=\Z^2$, so
$\check H^1(\Omega, \R) = \R^2$. The substitution acts via the matrix
$\left ( \begin{smallmatrix}1&1 \cr 1&0\end{smallmatrix}\right )$, 
with eigenvalues $\lambda_1=\phi$ and $\lambda_2=1-\phi$ and eigenvectors 
$\left ( \begin{smallmatrix} \lambda_{1,2} \cr 1 \end{smallmatrix} \right)$.
Deformations proportional to the second eigenvector are asymptotically
negligible, so all deformations are locally equivalent to an overall
rescaling followed by an asymptotically negligible deformation. In particular,
any two Fibonacci tiling spaces are topologically conjugate, up to an
overall rescaling. 

Similar arguments apply to any 1-dimensional substitution tiling space
where $\sigma^*$ acts on $\check H^1$ via a Pisot matrix. The
asymptotically negligible classes have codimension 1, so all
deformations yield spaces that are topologically conjugate up to
scale. In particular, for these substitutions, suspensions of
subshifts (with all tiles having size 1) have the same qualitative
properties as self-similar tilings.

For the Penrose tiling, $\check H^1(\Omega, \R^2) = \R^5 \otimes \R^2 = 
\R^{10}$. 
The eigenvalues of $\sigma^*$ are $\phi$ and $1-\phi$, each with multiplicity
4, and $-1$ with multiplicity 2. The multiplicity 4 for the large eigenvalue
corresponds to the 4-dimensional family of linear transformations that can
be applied to $\R^2$. (For self-similar tilings of $\R^d$, the leading 
eigenvalue of $\sigma^*$ always has multiplicity $d^2$.) 
Meanwhile, the two deformations with eigenvalue -1 break the
180-degree rotational symmetry of the tiling space. Thus, any combinatorial
Penrose tiling space that maintains 180-degree rotational symmetry must be 
topologically conjugate to a linear combination applied to the ``standard''
Penrose tiling space. 

For cut-and-project tilings, the asymptotically negligible classes depend on
the shape of the ``window''. When the window isn't too complicated, 
there is an explicit description of these classes. This theorem applies even when 
$\check H^1(\Omega, \R^d)$ is infinite-dimensional. 
\begin{thm}\cite{KS2}If the window of a cut-and-project scheme of codimension $n$ is a polytope, or a
finite union of polytopes, then $\dim(\check H^1_{an}(\Omega, \R^d))=nd$. The elements of 
$\check H^1_{an}(\Omega, \R^d)$ correspond to projections from $\R^{n+d}$ to $\R^d$, and all shape conjugacies
amount  to simply changing the projection by which points in the acceptance strip are sent to $\R^d$.
\end{thm}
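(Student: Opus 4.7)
The plan is to use the FHK/Kalugin description of cut-and-project cohomology together with the characterization of asymptotically negligible classes by differentials of weakly PE functions stated just above. Throughout, let $L \subset \R^{n+d}$ be the lattice and write $\R^{n+d} = E_\| \oplus E_\perp$ with $E_\| \cong \R^d$ the physical space and $E_\perp \cong \R^n$ the internal space, and let $W \subset E_\perp$ be the (polytopal) window.

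First I would exhibit an explicit $nd$-dimensional family of shape conjugacies. Any linear map $A \colon E_\perp \to E_\|$ determines a new projection $\pi_\|^A(x_\|,x_\perp) := x_\| + A x_\perp$ from $\R^{n+d}$ to $\R^d$; the new projection agrees with $\pi_\|$ on $E_\|$, and the acceptance strip $E_\| + W$ is unchanged, so the combinatorics of the cut-and-project tiling are unchanged. The new tiling $\ttt_A$ differs from $\ttt$ by moving each vertex by $A \pi_\perp(\tilde v)$, where $\tilde v \in L$ is the lift of $v$. The choice of $A$ lies in $\mathrm{Hom}(\R^n,\R^d) \cong \R^{nd}$, giving an $nd$-parameter family.

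Next I would check that these deformations sit in $\check H^1_{an}(\Omega,\R^d)$ by invoking Kellendonk's theorem. The vector-valued function $g_A \colon \R^d \to \R^d$ defined by $g_A(x) = A \, \pi_\perp(\tilde x)$, where $\tilde x$ is the natural lift associated to the local patch at $x$, is continuous on the canonical transversal and hence weakly pattern-equivariant in the sense of the earlier subsections; its exterior derivative is strongly PE and represents exactly the shape-deformation cochain associated with $A$. Hence each $A$ gives an asymptotically negligible class, and linear independence of the $A$'s descends to linear independence of the classes because the $g_A$ are already distinguished at the level of the transversal.

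The main step, and the main obstacle, is the reverse inclusion: every asymptotically negligible class arises this way. Here I would use the polytope hypothesis crucially. The FHK/Kalugin computation writes $\check H^1(\Omega, \R^d)$ as a direct sum of a ``lattice'' part $\cong L \otimes \R^d \cong \R^{(n+d)d}$ and a ``singular'' part indexed by codimension-one faces of $W$ and their $L$-translates; polytopality makes this decomposition finite-dimensional and explicit. Classes in the singular part correspond to rearrangements that jump discontinuously as one crosses the translated window walls, and I would argue these cannot be written as differentials of weakly PE functions because a weakly PE function is uniformly continuous on the transversal, while any nontrivial singular class produces a discontinuity whose variation across a window wall is bounded below. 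Hence $\check H^1_{an}(\Omega,\R^d)$ is contained in the lattice part. Finally, within the lattice part $L \otimes \R^d$, the subspace $E_\| \otimes \R^d$ of dimension $d^2$ corresponds to linear changes of the physical space itself, which are not asymptotically negligible (they rescale the trace on $\check H^d$); quotienting gives the remaining $nd$-dimensional piece, which is exactly the family of projection-changes constructed in the first step. The obstacle is the discontinuity argument on the transversal; it is precisely the polytope hypothesis that tames the boundary combinatorics and makes this argument work uniformly.
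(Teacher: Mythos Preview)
The paper does not contain a proof of this theorem; it is merely stated with the citation to [KS2] and the text moves on immediately to mixed cohomology. There is therefore no argument in this paper against which to compare your proposal.

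For what it is worth, your outline is broadly reasonable. The forward inclusion --- building the $nd$-parameter family by varying the projection $\pi_\|^A = \pi_\| + A\pi_\perp$ and then checking asymptotic negligibility via Kellendonk's criterion applied to the star map $x \mapsto A\,\pi_\perp(\tilde x)$ --- is the standard construction and is essentially correct as written. The reverse inclusion is where the real content lies, and your sketch is thinner there. Two cautions: first, the ``lattice part'' of $\check H^1(\Omega,\R)$ in the FHK/Kalugin picture is not always literally $L\otimes\R \cong \R^{n+d}$ once the window has nontrivial face combinatorics, so the clean splitting you assume needs justification; second, excluding the $E_\|\otimes\R^d$ directions by appeal to the trace on $\check H^d$ is a good heuristic, but the actual argument in [KS2] goes through the structure of the maximal equicontinuous factor (the $(n+d)$-torus) and identifies asymptotically negligible deformations with those that preserve the eigenvalue group, which is a somewhat different organizing principle than the one you propose.
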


Besides the cohomology of strongly PE functions and forms, we can consider the {\em weak PE} cohomology of 
weakly PE functions and forms, and the {\em mixed} cohomology \cite{Kel3}. 
We call a strongly PE form {\em weakly exact\/} if it can be written as $d$ of a weakly PE $(k-1)$-form. 
The $k$th mixed cohomology $H^k_{PE,m}(\ttt)$ of a tiling $\ttt$
is the quotient of the closed strongly PE $k$-forms by the weakly exact $k$-forms. This should not be viewed 
as a subgroup of 
$H^k_{PE}(\ttt) \equiv \check H^k(\Omega, \R)$. Rather, it is a quotient of $H^k_{PE}(\ttt)$ by those classes that
can be representedy by weakly exact forms. This can be identified with a quotient of $\check H^k(\Omega, \R)$. In dimension 1,  
$$H^1_{PE,m}(\ttt) \equiv \check H^1(\Omega,\R) / H^1_{an}(\Omega, \R).$$
Since $H^1_{an}(\Omega, \R^d)$ parametrizes shape conjugacies, this means that $H^1_{PE,m}(\ttt)$ parametrized 
deformations of a tiling space $\Omega_\ttt$
{\em up to topological conjugacy\/} rather than up to MLD equivalence \cite{Kel3}.

\subsection{Exact regularity}

A measure on a 
tiling space is equivalent to specifying the frequencies
of all possible patches.  Specifically, let $P$ be a patch in a specific
location (say, centered at the origin). Let $U$ be an open set in $\R^d$.
Let $\Omega_{P,U}$ be the set of all tilings $\ttt$ such that, for some $x\in U$,
$\ttt-x$ contains the patch $P$. In other words, $\ttt$ must contain the patch
$P$ at location $x$. As long as $U$ is chosen small enough, there is at 
most one $x \in U$ that works. For any tiling $\ttt$, let $freq_T(P)$ be the
number of occurrences of $P$, per unit are, in $\ttt$. That is, restrict
$\ttt$ to a large ball,
divide by the volume of the ball, and take a limit as the radius goes to
infinity. The ergodic theorem says that this limit exists for $\mu$-almost
every $\ttt$, with $freq_\ttt(P) = \mu(\Omega_{P,U})/Vol(U)$. If the 
tiling space is
uniquely ergodic, then this statement applies to every $\ttt$, not just to
almost every $\ttt$. 

There are two natural questions. First, what are the possible values of 
$freq_\ttt(P)$? Second, as we consider larger and larger balls, how quickly does
the number of occurrences of $P$ per unit area approach $freq_\ttt(P)$? Both
questions have cohomological answers. 

\begin{thm} For each patch $P$ and each sufficiently small open subset $U$
of $\R^d$, $\mu(\Omega_{P,U})/Vol(U)$ takes values in the frequency module of 
$X$. 
\end{thm}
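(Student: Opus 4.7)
The plan is to realize the patch-frequency $\mu(\Omega_{P,U})/\mathrm{Vol}(U)$ as the trace of an explicit integer PE $d$-cocycle, and then invoke the definition of the frequency module as the image of the trace map $Tr: \check H^d(\Omega) \to \R$ developed in the Gap Labeling subsection.

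First I would construct the cochain. Choose a distinguished tile $t_P$ of the patch $P$ (say, the tile containing the origin), and define a PE integer $d$-cochain $i_P$ by setting $i_P(c) = 1$ on a tile $c$ of $\ttt$ if and only if a translate of $P$ sits in $\ttt$ in such a way that its distinguished tile is $c$, and $i_P(c)=0$ otherwise. This is strongly pattern-equivariant with radius equal to the diameter of $P$, since whether $c$ is the distinguished tile of a copy of $P$ is determined by the tiling in a bounded neighborhood of $c$. Because there are no $(d+1)$-cells in the cellular decomposition of $\R^d$ coming from the tiling, $\delta i_P = 0$ automatically, so $i_P$ defines a class $[i_P] \in H^d_{PE}(\ttt) \simeq \check H^d(\Omega,\Z)$.

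Next I would compute the trace. By definition,
\begin{equation*}
Tr([i_P]) = \lim_{r\to\infty} \frac{i_P(B_r)}{\mathrm{Vol}(B_r)},
\end{equation*}
and $i_P(B_r)$ is, up to a boundary error bounded by a constant times $\mathrm{Vol}(\partial B_r)$, the number of occurrences of $P$ in $\ttt\cap B_r$. Dividing by $\mathrm{Vol}(B_r)$ and letting $r\to\infty$ gives exactly $freq_\ttt(P)$. On the other hand, for $U$ small enough that at most one translate $x \in U$ makes $\ttt - x$ contain $P$ in its canonical position, the unique ergodicity / Birkhoff argument sketched before the theorem identifies $freq_\ttt(P)$ with $\mu(\Omega_{P,U})/\mathrm{Vol}(U)$, independently of $\ttt$. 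Combining the two equalities,
\begin{equation*}
\frac{\mu(\Omega_{P,U})}{\mathrm{Vol}(U)} = Tr([i_P]) \in Tr\bigl(\check H^d(\Omega)\bigr),
\end{equation*}
which by definition is the frequency module.

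The main obstacle is the boundary-term estimate: one has to know that $i_P(B_r)/\mathrm{Vol}(B_r)$ really converges, and converges to the ergodic-theoretic quantity $\mu(\Omega_{P,U})/\mathrm{Vol}(U)$, uniformly enough that $P$-occurrences crossing $\partial B_r$ don't spoil the limit. Under finite local complexity and unique ergodicity this is standard (the collection of partial occurrences near $\partial B_r$ is $O(\mathrm{Vol}(\partial B_r))$, which is negligible compared to $\mathrm{Vol}(B_r)$), but it is the step where the hypotheses on $\Omega$ are actually used; everything else is a formal identification of the two descriptions of $freq_\ttt(P)$.
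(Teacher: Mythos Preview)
Your proposal is correct and follows essentially the same approach as the paper: define an indicator $d$-cochain $i_P$ that equals $1$ on a designated tile of each occurrence of $P$, observe that it is automatically co-closed in top degree and hence defines a class in $\check H^d(\Omega)$, and identify its trace with $freq_\ttt(P)=\mu(\Omega_{P,U})/\mathrm{Vol}(U)$. The paper's proof is terser and leaves the boundary estimate and ergodic identification implicit, but the argument is the same.
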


\begin{proof} Let $i_P$ be a pattern-equivariant $d$-cochain that equals
1 on one of the tiles of $P$ and is zero on all other tiles. Being
of the top dimension, $i_P$ is co-closed, and so represents a cohomology
class. For any region
$R$, $i_P(R)$ is just the number of occurrences of $P$ in $R$. The limiting
number per unit area $freq_\ttt(P)$ is then the trace of the class of $i_P$.
\end{proof}

\begin{thm} \cite{ERP2}
Suppose that $\check H^d(\Omega,\Q) = \Q^k$ for some integer $k$. 
Then there exist patches $P_1$, \ldots, $P_k$ 
with the following property: for any other patch $P$, there exist rational
numbers $c_1(P)$, \ldots, $c_k(P)$ such that, 
for any region $R$ in any tiling $\ttt \in X$, the number 
of appearances of $P$ in $R$ equals $\sum_{i=1}^k c_i(P)n_i(R) + e(P, R)$, 
where $n_i(R)$ is the number of
appearances of $P_i$ in $R$, and $e(P, R)$ 
is an error term computable from the patterns that
appear on the boundary of $R$. In particular, the magnitude of $e(P, R)$ 
is bounded by a constant (that may depend on $P$) 
times the measure of the boundary of $R$. Furthermore, if $\check H^d(\Omega)
= \Z^k$ is finitely generated over the integers, then we can pick the 
coefficients $c_i$ to be integers. 
\end{thm}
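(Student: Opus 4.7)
The plan is to work inside the pattern-equivariant model $H^d_{PE}(\ttt)\cong\check H^d(\Omega)$. For each finite patch $P$ with a chosen distinguished tile, let $i_P\in C^d_{PE}(\ttt)$ be the indicator cochain equal to $1$ on a $d$-cell $t$ iff $t$ is the distinguished tile of an occurrence of $P$ in $\ttt$, and $0$ otherwise. Then $i_P$ is strongly pattern-equivariant, and since there are no $(d+1)$-cells it is automatically co-closed, so it represents a class $[i_P]\in\check H^d(\Omega)$. Tautologically, every strongly PE $d$-cochain of radius $R$ is a finite $\Z$-linear combination of indicator cochains of patches of radius $R$, so the classes $\{[i_P]\}_P$ generate $\check H^d(\Omega)$ as an abelian group.

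Next, I would fix patches $P_1,\ldots,P_k$ whose classes $[i_{P_1}],\ldots,[i_{P_k}]$ form a $\Q$-basis of $\check H^d(\Omega,\Q)=\Q^k$. For any other patch $P$, expanding $[i_P]=\sum_{i=1}^k c_i(P)[i_{P_i}]$ uniquely with $c_i(P)\in\Q$ gives, at the cochain level,
\begin{equation*}
  i_P \;-\; \sum_{i=1}^{k} c_i(P)\, i_{P_i} \;=\; \delta\beta_P
\end{equation*}
for some strongly PE $(d-1)$-cochain $\beta_P$ with rational values. Evaluating both sides on a bounded cellular region $R\subset\R^d$ and using the defining adjunction $(\delta\beta_P)(R)=\beta_P(\partial R)$ produces
\begin{equation*}
  i_P(R) \;=\; \sum_{i=1}^{k} c_i(P)\, n_i(R) \;+\; \beta_P(\partial R),
\end{equation*}
with $n_i(R):=i_{P_i}(R)$. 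The residual gap between $i_P(R)$ and the actual count of occurrences of $P$ contained entirely inside $R$ (occurrences whose distinguished tile lies in $R$ but whose support pokes outside, and vice versa) is a strongly PE correction supported within the diameter of $P$ of $\partial R$, so it can be folded into a single error term $e(P,R)$ that is computable from tile labels near $\partial R$. Because $\beta_P$ and the correction each take only finitely many values on cells (being pullbacks of functions on finite approximants), one gets $|e(P,R)|\le M_P\cdot|\partial R|$ with $M_P$ depending only on $P$.

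For the integer refinement, we need $\{[i_{P_i}]\}$ to be a $\Z$-basis of $\check H^d(\Omega)=\Z^k$, not merely a $\Q$-basis; equivalently, since any surjection $\Z^k\twoheadrightarrow\Z^k$ is an isomorphism, we need them to $\Z$-generate $\Z^k$. This is the main obstacle, because a $k$-element subset of a $\Z$-generating set of $\Z^k$ need not itself $\Z$-generate ($\{2,3\}\subset\Z$ is already a counterexample). The natural remedy is to allow the radius of the $P_i$'s to grow, since indicator cochains of patches of radius $R$ generate all integer PE $d$-cochains of radius $R$ and the subgroups $G_R\subset\Z^k$ they span stabilize at $\Z^k$ once $R$ is large enough. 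One must then argue that for sufficiently large $R$ a judicious $k$-tuple of single-patch indicator classes at radius $R$ already $\Z$-generates $\Z^k$; I expect this to require either a direct combinatorial analysis of how indicator classes enlarge with $R$ or a Smith-normal-form reduction performed patch by patch (possibly after a further collaring to trade integer combinations of small-patch indicators for single large-patch indicators). Once the $P_i$'s are so chosen, $[i_P]=\sum c_i(P)[i_{P_i}]$ automatically has $c_i(P)\in\Z$ for every $P$, and the derivation of the formula for $i_P(R)$ carries over verbatim.
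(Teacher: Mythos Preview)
Your argument for the rational case is essentially identical to the paper's: represent the top cohomology by pattern-equivariant $d$-cochains, note that indicator cochains $i_P$ are co-closed and span, choose a $\Q$-basis $[i_{P_1}],\ldots,[i_{P_k}]$, write $i_P-\sum c_i i_{P_i}=\delta\beta$, and evaluate on $R$ to obtain $i_P(R)=\sum c_i n_i(R)+\beta(\partial R)$ with $\beta(\partial R)$ bounded by a constant times $|\partial R|$ because $\beta$ is PE and hence bounded on cells. Your extra care about occurrences whose support crosses $\partial R$ is a refinement the paper suppresses by simply declaring $i_P(R)$ to be the count of $P$'s in $R$; either convention is fine and the difference is absorbed into the boundary term.

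On the integer refinement you have been more scrupulous than the paper. The proof here simply asserts that when $\check H^d(\Omega)=\Z^k$ one can \emph{choose} patches whose indicator classes generate $\Z^k$, and then the coefficients $c_i$ are automatically integers; it does not justify that $k$ single-patch indicators can be found that $\Z$-generate, which is exactly the obstacle you flag. Your proposed remedy---pass to a large enough collaring radius so that the images of indicator classes already fill $\Z^k$, and then argue via refinement or Smith normal form that a $k$-tuple of single patches suffices---is the right direction, and the fact that indicator cochains at radius $R$ form a $\Z$-basis of $C^d_{PE,R}$ (not merely a spanning set) is what makes this plausible. The expository proof in the paper does not carry out this step; a complete treatment would have to come from the cited source, so your identification of this as the nontrivial residual point is apt.
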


\begin{cor}
If the patches $P_1$, \ldots, $P_k$ have well-defined frequencies, then
$\Omega$ is uniquely ergodic and there exist uniform bounds for the convergence
of all patch frequencies to their ergodic averages. 
If the regions $R$ are chosen to be balls, whose radii we denote $r$, then 
the number of $P$'s per unit area approaches $\sum c_i freq(P_i)$ at least
as fast as one the frequency of one of the $P_i$'s approaches $freq(P_i)$, 
or as fast as $r^{-1}$, whichever is slower.
\end{cor}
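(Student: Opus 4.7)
The plan is to divide the exact regularity identity of the previous theorem by $Vol(R)$ and control each resulting term.

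First, I would specialize the identity to $R = B_r$ (a ball of radius $r$) and rewrite it as
\begin{equation*}
\frac{\#\{P \text{ in } B_r\}}{Vol(B_r)} = \sum_{i=1}^{k} c_i(P)\, \frac{n_i(B_r)}{Vol(B_r)} + \frac{e(P,B_r)}{Vol(B_r)}.
\end{equation*}
The boundary estimate $|e(P,B_r)| \le C_P \cdot Vol(\partial B_r)$ from the preceding theorem, combined with the isoperimetric ratio $Vol(\partial B_r)/Vol(B_r) = O(r^{-1})$, produces an $O(1/r)$ bound for the error term that is uniform in the tiling $\ttt$ and uses no hypothesis at all about frequencies.

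Next, I would invoke the hypothesis that each of $P_1,\ldots,P_k$ has a well-defined frequency, i.e.\ $n_i(B_r)/Vol(B_r) \to freq(P_i)$ uniformly in $\ttt$. Then the first term on the right converges uniformly in $\ttt$ to $\sum c_i(P)\, freq(P_i)$, and combined with the $O(1/r)$ error the left-hand side converges uniformly in $\ttt$ to $freq(P) := \sum c_i(P)\, freq(P_i)$ for every patch $P$. Uniform existence of all patch frequencies, independent of the chosen tiling, is the standard criterion for unique ergodicity of an FLC tiling space, so $\Omega$ is uniquely ergodic.

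For the quantitative assertion I would simply subtract $freq(P)$ from both sides and apply the triangle inequality, obtaining
\begin{equation*}
\left|\frac{\#\{P \text{ in } B_r\}}{Vol(B_r)} - freq(P)\right| \le \sum_{i=1}^{k} |c_i(P)|\, \left|\frac{n_i(B_r)}{Vol(B_r)} - freq(P_i)\right| + \frac{C_P}{r}.
\end{equation*}
The right-hand side is dominated by the worst of the $k$ convergence rates for the $P_i$'s together with the $O(r^{-1})$ boundary term, which is exactly the ``whichever is slower'' bound in the statement.

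The algebraic work is essentially automatic; the only step that is not purely formal is the passage from uniform convergence of patch counts to unique ergodicity of $\Omega$. This requires knowing that the characteristic functions of cylinder sets $\Omega_{P,U}$ generate, under uniform closure, a dense subalgebra of $C(\Omega)$, so that uniform convergence of all patch counts transfers to uniform convergence of Birkhoff averages of every continuous function — which is the definition of unique ergodicity. I expect this step to be the main obstacle, in that it is the only part of the argument which is not a one-line consequence of dividing the exact regularity identity by $Vol(B_r)$.
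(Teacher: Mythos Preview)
Your proposal is correct and follows essentially the same route as the paper: divide the exact-regularity identity by $Vol(B_r)$ and bound the deviation by the sum of the deviations of the $n_i(B_r)/Vol(B_r)$ from $freq(P_i)$ plus the $|\partial B_r|/Vol(B_r)\sim r^{-1}$ boundary term. The paper's proof is terser and does not spell out the passage from uniform convergence of all patch frequencies to unique ergodicity, so your explicit treatment of that step (via density of cylinder-set indicators in $C(\Omega)$) is if anything more complete than the original.
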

Note that this theorem and its corollary 
apply to all tiling spaces, and not just to 
substitution tiling spaces. 

\begin{proof} Using the isomorphism between \v Cech and pattern-equivariant
cohomology, pick patches $P_1$, \ldots, $P_k$ such that the cohomology
classes of $i_{P_i}$ are linearly independent. These classes then form a 
basis for $H_{PE}^d(X) = \Q^k$, and we can write $[i_P] = \sum c_i
[i_{P_i}]$, where $[\alpha]$ denotes the cohomology class of the cochain
$\alpha$. This means that there is a $(d-1)$-cochain $\beta$ such that
$i_P = \sum_i c_i i_{P_i} + \delta \beta$. Then
\begin{equation} \hbox{number of $P$ in $R$} = i_P(R) = 
\sum c_i i_{P_i}(R) + \delta \beta(R) = \sum c_i n_i(R) + \beta(\partial R).
\end{equation}
Since $\beta$ is pattern-equivariant there is a maximum value that it takes
on any $(d-1)$-cell, so the error term
$\beta(\partial R)$ is bounded by a constant times the area of the boundary
of $R$. Dividing by the volume of $R$, the deviation of the left-hand-side
from $freq(P)$ is bounded by the deviation of $n_i(R)/Vol(R)$ from
$freq(P_i)$ or by $|\partial R|/Vol(R) \sim r^{-1}$.       

If $\check H^d(\Omega) = \Z^k$, then the same argument applies with the patches
chosen such that $[i_{P_i}]$
are generators of $\Z^k$ and with integral coefficients $c_i$. 
\end{proof}

When $\Omega$ is a 1-dimensional tiling space, it is possible to pick $R$
such that $\partial R$ is homologically trivial. Let $\beta$ be 
pattern-equivariant with radius $r_0$, and let $W$ be a word of length
at least $2r_0$. Pick $R$ to be an interval that starts in the middle
of one occurrence of $W$ and ends in the corresponding spot of another
occurrence. Then $\delta \beta(R) = \beta(\partial R)$ vanishes, and 
the number of $P$'s in $R$ is {\em exactly} $\sum_i c_i(P) n_i(R)$. 
This is called {\em exact regularity} \cite{ERP1, ERP2}. 

\subsection{Invariant measures and homology}

Exact regularity is dual to an earlier description \cite{BG} 
of invariant measures
in terms of the real-valued homology $H_d(\Gamma^n,\R)$ of the approximants. 
Recall that measures do not pull back, but instead {\em push forward}
like homology classes: Given a measure $\mu$ on a space
$X$ and a continuous map $f: X \to Y$, there is a measure $f_*\mu$ on $Y$.
For any measurable set $S \subset Y$, $f_*\mu(S) := \mu(f^{-1}(S))$. Thus, a
measure $\mu$ on a tiling space gives rise to a sequence of measures 
$\mu_n$ on the approximants $\Gamma^n$, with $(\rho_n)_* \mu_n = \mu_{n-1}$.

Integration gives a pairing between indicator $d$-cochains and measures.
$\langle \mu, i_P\rangle = freq(P)$. This extends to a pairing
between measures and cohomology, both for the tiling space and for each
approximant. By the universal coefficients theorem, the dual space to
the top cohomology group $H^d(\Gamma^n , \R)$ is 
the top homology group $H_d(\Gamma^n, \R)$, so we can view $\mu_n$
as living in $H_d(\Gamma^n ,\R)$, and $\mu$ as living in the inverse limit
space $\ilim (H_d(\Gamma^n,\R), (\rho_n)_*).$  (The identification of 
$\mu_d$ as an element of $H_d(\Gamma^n, \R)$ can also be seen more directly.
A measure can be viewed as a chain satisfying certain ``switching rules'', or
``Kirchoff-like laws''. These rules are equivalent to saying that the boundary
operator applied to $\mu_d$ is zero, i.e. that $\mu_d$ defines a homology 
class.) 

The measure of any cylinder set is non-negative, so each $\mu_d$ must lie
in the positive cone of $H_d(\Gamma^n, \R)$. Not only is $\mu$ 
constrained to lie in the inverse limit of the top homologies of the
approximants, but $\mu$ must lie in the inverse limit of the positive cones. 
All of the invariant measures on a tiling space can be determined from
the transition matrices $(\rho_n)_*$,
and in particular we can tell whether the tiling space is uniquely ergodic.

\end{document}